\newcommand{\R}{\mathbb{R}}
\newcommand{\C}{\mathbb{C}}
\newcommand{\D}{\mathbb{D}}
\newcommand{\Kappa}{\mathcal{K}}
\newcommand{\res}{\operatorname{res}}
\newcommand{\re}{\operatorname{Re}}
\newcommand{\im}{\operatorname{Im}}
\numberwithin{equation}{section}
\numberwithin{figure}{section}
\newcommand{\fracG}{\mathfrak{G}}
\theoremstyle{plain} 
\newtheorem{theorem}{Theorem}[section]
\newtheorem{lemma}[theorem]{Lemma}
\newtheorem{proposition}[theorem]{Proposition}
\newtheorem{conjecture}[theorem]{Conjecture}
\newtheorem{problem}[theorem]{Problem}
\theoremstyle{definition} 
\newtheorem{definition}[theorem]{Definition}
\newtheorem{remark}[theorem]{Remark}
\newtheorem{example}[theorem]{Example}
\title{Complete Embedded Harmonic Surfaces in $\R^3$}
\author
{Peter Connor}
\address{Peter Connor\\Department of Mathematical Sciences\\Indiana University South Bend\\South
Bend\\IN 46634\\USA}
\author{ 
Kevin Li
}
\address{Kevin Li\\Department of Computer Science and Mathematical Sciences\\Penn State Harrisburg\\Middletown, PA 17057\\USA}
\author{
Matthias Weber
}
\address{Matthias Weber\\Department of Mathematics\\Indiana University\\
Bloomington, IN 47405
\\USA}
\thanks{This work was partially supported by a grant from the Simons Foundation (246039 to Matthias Weber)}
\subjclass[2010]{Primary 53C43; Secondary 53C45}
\date{\today}
\begin{document}

\begin{abstract}
Embedded minimal surfaces of finite total curvature  in $\R^3$ are reasonably well understood: From far away, they look like intersecting catenoids and planes, suitably desingularized.

We consider the larger class of harmonic embeddings in $\R^{3}$ of compact Riemann surfaces with finitely many punctures where the harmonic coordinate functions are given as  real parts of meromorphic functions. This paper is motivated by two outstanding features of such surfaces:
They can have highly complicated ends, and they still have total Gauss curvature being a multiple of $2\pi$.
This poses the double challenge to construct and classify examples of fixed  total Gauss curvature. Our  results include
\begin{itemize}
\item a classification of embedded harmonic ends of small total curvature,
\item  the   construction of examples of embedded ends of arbitrarily large total curvature,
\item  a classification  of complete embedded harmonic surfaces of small total curvature in the spirit of the corresponding classification of minimal surfaces of small total curvature, and   
\item the largely experimental  construction of  complete embedded harmonic surfaces with non-trivial topology that incorporate some of the new  harmonic ends.
\end{itemize}
\end{abstract}



\maketitle


\section{Introduction}

In this paper we will discuss  harmonic embeddings of punctured compact Riemann surfaces into $\R^3$ whose coordinate differentials admit  meromorphic extensions into the punctures. 

Our original motivation was  to investigate conformally parametrized minimal surfaces of finite total curvature
as a subclass of these more general surfaces, but
preliminary experiments quickly lead to two surprising discoveries: 

\begin{enumerate}
\item In the harmonic context, there exists an abundance of intricate embedded ends, while in the minimal setting of finite total curvature, one only has planar and catenoidal ends.
\item There still holds a simple Gauss-Bonnet formula for complete surfaces, even though the Gauss map does not extend continuously into the punctures.
\end{enumerate}

We have dedicated \cite{clw1} to a (rather technical) proof of this Gauss-Bonnet theorem, and this paper deals with the new examples, some classification results, and several open problems. We begin with a precise definition of the surfaces we will discuss:

Let $X$ be a compact Riemann surface with finitely many distinguished points $p_1,\ldots, p_n$, and $\omega_1$, $\omega_2$, $\omega_3$ be meromorphic 1-forms on $X$ that are holomorphic on $X'=X-\{p_1,\ldots, p_n\}$.
Denote the pole order of $\omega_i$ at $p_j$ by $n^j_i$, and let $n^j = \max\{ n^j_1, n^j_2, n^j_3\}$.

Assume that the periods
\[
 \int_\gamma \left(\omega_1, \omega_2, \omega_3\right)
\]
are  imaginary for all closed cycles $\gamma$ on $X'$. In particular, all residues $\res_{p_j}\omega_i$ are assumed to be real.

 Then the map
\[
f(z) = \re \int^z \left(\omega_1, \omega_2, \omega_3\right)
\]
defines a  harmonic map from $X'$ into $\R^3$. 

The simplest embedded examples of this special type of harmonic maps are graphs of rational functions such as the hyperbolic paraboloid 
\[
f(z)=\re\int^z\left(1,-i,iz\right)
\]
or $f(x,y)=(x,y,xy)$.

The best studied examples of such surfaces are complete minimal surfaces of finite total curvature. By Osserman's theorem \cite{oss2}, such surfaces are defined by meromorphic Weierstrass data on compact Riemann surfaces. However, the geometry of embedded minimal examples is quite limited, as the only complete embedded ends of finite total curvature are catenoidal or planar, giving any such surface the look of  tiered Costa surfaces, as proven to exist in \cite{ww2} and \cite{tr4}. In contrast, we will see that most theorems about minimal surfaces are false for these harmonic surfaces.

However, there is a Gauss-Bonnet theorem for harmonic surfaces, generalizing the 
Gackstatter-Jorge-Meeks formula \cite{gac1,jm1} for minimal surfaces.
In \cite{clw1}, we have proven:

\begin{theorem} 
\label{thm:gaussbonnet}
For harmonic surfaces as above, we have the generalized Gauss-Bonnet formula
\[
\int_{X'} K\, dA - \sum_{i=1}^n \Kappa_i = 2\pi \chi(X)
\]
where 
\[
\Kappa_j = -2\pi (n^j-1)
\]
\end{theorem}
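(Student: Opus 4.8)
The plan is to run the standard exhaustion argument for Gauss--Bonnet on an open surface, pushing all of the content into a local analysis at the punctures. In a holomorphic coordinate $z$ away from the poles, write $\omega_i=\phi_i\,dz$; then $\partial f/\partial z=\tfrac12(\phi_1,\phi_2,\phi_3)$, so the induced metric $f^{*}(dx^{2}+dy^{2}+dz^{2})=E\,du^{2}+2F\,du\,dv+G\,dv^{2}$ has $E+G=\sum_i|\phi_i|^{2}$ and $E-G-2iF=\sum_i\phi_i^{2}$, the latter being the coefficient of the Hopf differential $Q=\sum_i\omega_i^{2}$, which vanishes identically exactly in the conformal (minimal) case. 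Thus $EG-F^{2}=\tfrac14\bigl((\sum_i|\phi_i|^{2})^{2}-|\sum_i\phi_i^{2}|^{2}\bigr)$, and, assuming $f$ is an immersion on $X'$ (the case relevant to embeddings), this is positive and $ds^{2}$ is a smooth --- though generally non-conformal --- Riemannian metric on $X'$. A first step is to check that $\int_{X'}|K|\,dA<\infty$: in the chart at $p_j$ one has $\phi_i(z)=O(z^{-n^{j}})$ with at least one $\phi_i$ of exact pole order $n^{j}$, and a careful expansion of the curvature of $ds^{2}$ shows that, although $dA$ itself is not integrable at $z=0$, the product $K\,dA$ is.

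Next, for small $\varepsilon>0$ let $X_\varepsilon=X\setminus\bigcup_{j}D_\varepsilon(p_j)$, a compact surface with boundary with $\chi(X_\varepsilon)=\chi(X)-n$, and apply the classical Gauss--Bonnet theorem:
\[
\int_{X_\varepsilon}K\,dA+\sum_{j=1}^{n}\int_{\gamma_j^{\varepsilon}}\kappa_g\,ds=2\pi\bigl(\chi(X)-n\bigr),
\]
where $\gamma_j^{\varepsilon}=\partial D_\varepsilon(p_j)$ carries the orientation it inherits from $\partial X_\varepsilon$. By the integrability above, $\int_{X_\varepsilon}K\,dA\to\int_{X'}K\,dA$; hence the theorem reduces to proving that $L_j:=\lim_{\varepsilon\to0}\int_{\gamma_j^{\varepsilon}}\kappa_g\,ds$ exists and equals $2\pi(n^{j}-1)$, after which the Gauss--Bonnet formula follows by substituting into the displayed identity and letting $\varepsilon\to0$.

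The heart of the argument is the computation of $L_j$. I would parametrize $\gamma_j^{\varepsilon}$ by $z=\varepsilon e^{i\theta}$ and examine the image curve $c_\varepsilon(\theta)=f(\varepsilon e^{i\theta})$, with velocity $c_\varepsilon'(\theta)=\re\bigl(\phi(\varepsilon e^{i\theta})\,i\varepsilon e^{i\theta}\bigr)$. Writing $\phi(z)=z^{-n^{j}}\mathbf a+z^{-n^{j}+1}\mathbf b+\cdots$ with $\mathbf a\neq0$, the leading behaviour is that of $\int^{z}z^{-n^{j}}\,dz\sim\tfrac{1}{1-n^{j}}z^{1-n^{j}}$, so after rescaling by $\varepsilon^{\,n^{j}-1}$ the curves $c_\varepsilon$ converge in $C^{1}$ to a fixed closed curve $\gamma_\infty(\theta)=\re\bigl(\tfrac{1}{1-n^{j}}e^{i(1-n^{j})\theta}\mathbf a\bigr)$ in $\R^{3}$, whose unit tangent performs $n^{j}-1$ full revolutions as $\theta$ traverses $[0,2\pi]$. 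Since $\int_{\gamma}\kappa_g\,ds$ along a closed curve equals $2\pi$ times the turning number of its unit tangent (relative to parallel transport), up to an error controlled by $\int K\,dA$ over the thin region swept out and by the non-conformality of $ds^{2}$, one obtains $L_j=2\pi(n^{j}-1)$ --- consistent, for instance, with the plane and the catenoid, each of whose ends has $n^{j}=2$ and $L_j=2\pi$.

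The main obstacle is making this last step rigorous. The delicate situations are the \emph{degenerate} leading coefficients: when $\mathbf a$ is a complex scalar multiple of a real vector --- equivalently $f_u\parallel f_v$ to leading order, so the Gauss map spins and does not extend continuously across $p_j$, precisely the phenomenon flagged in the introduction --- the limit curve $\gamma_\infty$ collapses and one must descend to the next coefficient $\mathbf b$ (and, if necessary, further); at the same time the swept-region error must be controlled uniformly as $\varepsilon\to0$, and one must verify that the non-conformality of $ds^{2}$ does not change the turning count. This bookkeeping, carried out in \cite{clw1}, is where the real work lies; everything else is the standard exhaustion argument above.
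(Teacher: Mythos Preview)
The paper does not prove this theorem at all: it is simply quoted from the companion paper \cite{clw1}, with the remark that the proof there is ``rather technical.'' So there is no proof here for your sketch to be compared against. Your outline is, in fact, exactly the standard exhaustion strategy one would expect \cite{clw1} to follow --- excise disks around the punctures, apply Gauss--Bonnet with boundary, and push everything into the limit $\lim_{\varepsilon\to0}\int_{\gamma_j^\varepsilon}\kappa_g\,ds$ --- and you correctly identify that the genuine work is the local computation of that limit when the leading coefficient vector $\mathbf a$ is degenerate (real up to a phase), which is precisely the situation in which the Gauss map fails to extend. Since you yourself defer that computation to \cite{clw1}, your proposal and the paper are doing the same thing: pointing at \cite{clw1}.

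Two small comments on the sketch itself. First, your parametrization $z=\varepsilon e^{i\theta}$ traverses $\gamma_j^\varepsilon$ counterclockwise in the chart, which is the \emph{opposite} of the induced boundary orientation of $X_\varepsilon$; the sign comes out right in the end because the map $z\mapsto z^{1-n^j}$ reverses the sense of rotation, but it would be cleaner to say so. Second, your appeal to ``turning number relative to parallel transport'' for $\int\kappa_g\,ds$ is essentially Gauss--Bonnet again, so it is not quite an independent argument; the honest version is that the rescaled image curves converge (in the nondegenerate case) to a curve on a flat $2$--plane in $\R^3$ whose total curvature is $2\pi(n^j-1)$, and one then has to argue that the ambient total curvature of the image curve agrees with the intrinsic $\int\kappa_g\,ds$ in the limit --- another point where the nonconformality of the metric and the degeneracies of $\mathbf a$ intervene, and another place where the real work lives in \cite{clw1}.
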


Note also that  in the minimal case, the theorem above is easy to prove, as all complete minimal ends of finite total curvature have a limit tangent plane. 

A consequence of the Gauss-Bonnet theorem is a quantization of the total curvature in integral multiples of $2\pi$, which we will use as a measure of the complexity of the end.

This paper has three main goals:

\begin{enumerate}
\item
Construct examples of embedded harmonic ends of arbitrarily large total curvature.
\item Classify embedded harmonic ends of small total curvature.
\item Classify complete properly embedded harmonic surfaces of small total curvature in the spirit of the corresponding classification of minimal surfaces of small total curvature.
\end{enumerate}

The paper is organized as follows:  In section \ref{sec:low}, we discuss ends of small total curvature. 
While for minimal surfaces, all embedded ends of a given type (and growth rate) are asymptotic to each other, 
this fails for harmonic ends for several reasons. Not only can affine transformations deform an end, but higher order terms can
determine whether an end is eventually embedded or not. Thus while we do have some general statements for ends of  small curvature, the discussion for large curvature is limited to existence and non-existence questions.

The general question to determine whether there are properly embedded ends of a given type is partially addressed in section \ref{sec:high}.  We present two intricate isolated properly embedded examples and give examples of several families of properly embedded ends of arbitrarily high order.

We then turn to the discussion of small total curvature in section \ref{sec:ftc}, where we give a complete classification in case the total curvature is $-2\pi$ or $-4\pi$.

In section \ref{sec:embedded} we will construct complete embedded examples of harmonic surfaces that incorporate some of the ends we have found into surfaces with topology.

Finally, in section \ref{sec:open}, we list open problems that arose naturally in the course of these investigations.

\medskip
This paper has a companion website at \url{http://www.indiana.edu/~minimal/archive/Harmonic/index.html} where one can find more examples with images and Mathematica notebooks. The latter provide additional 
evidence for the sometimes technical regularity and embeddedness claims in form of supporting computations, both using computer algebra and numerics.

\bigskip
Some of the new harmonic ends have an amusing resemblance with classical minimal surface ends. For instance, our end of type $(2,2,3)$ looks at first glance like an Enneper end. It does have the same symmetries, but it is in fact graphical. This resemblance suggests that one can employ
 the well-known Hoffman-Karcher approach (\cite{karch3, hk2}) to construct minimal surfaces with prescribed geometric features:  One first  derives candidate Weierstrass data from divisors of the Gauss map and height differential, and then closes the periods by adjusting parameters. The second step is dramatically simplified because we can close all periods just by adding suitably holomorphic forms to the meromorphic data, which will neither affect the asymptotic behavior of the ends nor the topology of the surface. This allows one to perform virtually all constructions that have been done with Enneper's surface also with the end of type $(2,2,3)$, albeit in an embedded setting.
 
On the other hand, our situation is becoming considerably more complicated when we investigate the more exotic ends, because we have to accommodate their rather intricate asymptotic geometry. Thus the increased flexibility --- more types of embedded ends and no significant period problem --- makes these new examples of harmonic surfaces interesting candidates for applications in geometric modeling and 
design.

\medskip


Many of the examples we construct show how well-known theorems for minimal surfaces in $\R^3$  fail in this more general setting.
For convenience, the following list of theorems points to the relevant examples.

\begin{itemize}
\item Lopez-Ros theorem: The plane and catenoid are the only properly embedded minimal surfaces with finite total curvature and genus zero \cite{lor1}.  See section \ref{subsection:-2pi}.
\item Hoffman-Meeks conjecture: The moduli space $M(k,r)$ of properly embedded minimal surfaces, where $k$ is the genus and $r$ is the number of ends, is empty if $r>k+2$ \cite{hm7}.  See section \ref{subsection:-4pi}.
\item Schoen's 2-end theorem: A properly embedded minimal surface with finite total curvature and two embedded ends must be a catenoid \cite{sc1}.  See section \ref{subsection:-4pi}.
\item Strong Half-Space theorem: Two proper, connected minimal surfaces which don't intersect must be parallel planes \cite{hm10}.  See section \ref{subsection:(0,0,1)}.
\end{itemize}

\section{Classification of embedded ends of low order}
\label{sec:low}

In this section, we will systematically describe the geometry of ends of harmonic surfaces. We begin with preliminary remarks and some notation.

Note that a regular affine transformation can change  the order of the forms $\omega_k$ while not affecting the appearance of the end by much. To obtain a rough classification of ends that is independent of affine modifications, we define:

\begin{definition}
A meromorphic  end  is given by a harmonic  map
 $f:\D^*\to \R^3$ of the form
\[
f(z)=\re \int^z (\omega_1,\omega_2,\omega_3)
\]
where the 1-forms $\omega_i$ are holomorphic in $\D^*$.
We say that two ends $f$ and $\tilde f$  given as above are \emph{affinely equivalent} if there is a regular real affine transformation $A:\R^{3}\to \R^{3}$ such that $\tilde f = A\circ f$.

We say that an end is in \emph{reduced form} if the pole orders $n_k$ of $\omega_k$ at $0$ satisfy $n_1\le n_2\le n_{3}$ and if $(n_1,n_2,n_3)$ is minimal in lexicographic ordering among all affinely equivalent ends. The \emph{type} of an end is the tuple
  $(n_1,n_2,n_3)$ of an affinely equivalent end in reduced form. We then call $n_3$ the \emph{order} of the end.
\end{definition}

With increasing order, our description will become less and less detailed, while the examples will become more and more complicated.

\subsection{Ends of order 1}

Not surprisingly, in this simplest case we have a rather complete classification. We will see that all ends of this type look like the following.

\begin{example}
The prototype of an end of order 1 is given by 
\begin{align*} 
   \omega_1 = {}& 1\, dz\\
    \omega_2 = {}& i\, dz \\
     \omega_3 = {}& \frac1z \, dz \\
\end{align*}
so that 
$f(z)$ is the graph of $\log(|z|)$.
It is easy to create complete surfaces with many ends of order 1, see Figure \ref{figure:urchin}.
\end{example}

Now let's assume that  $f$ parametrizes a harmonic end of order 1 in the punctured disk.
As the harmonic map $f$ is assumed to be single valued, the residues of $\omega_i$ are necessarily real. By applying an affine transformation (that will neither affect embeddedness nor regularity), we can assume that $\omega_1$ and $\omega_2$ are holomorphic at $0$ and $\omega_3 = \frac 1z + O(1)\, dz$. In this case, we say the end is in {\em normal form}. Then we have:

\begin{proposition}
Let $f:\D\to \R^3$ be a meromorphic end of order 1 in normal form with
\begin{align*} 
   \omega_1 = {}& \left(a_1+b_1 i + O(1)\right)\, dz\\
    \omega_2 = {}& \left(a_2+b_2 i + O(1)\right)\, dz \\
     \omega_3 = {}& \left(\frac1z + a_3+b_3 i + O(1)\right)\, dz \\
\end{align*}
If  $a_1+b_1 i$ and $a_2+b_2 i$ are independent over $\R$, this end is properly embedded.
\end{proposition}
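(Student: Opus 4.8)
The plan is to produce an explicit asymptotic expansion of $f$ near the puncture and read off embeddedness from the dominant terms. After integrating, write $z = e^{-w}$ with $w = u + iv$, $u \to +\infty$, so that the punctured disk is parametrized by a half-cylinder. The third coordinate $f_3 = \re \int^z \omega_3$ picks up the logarithmic term $\re \log z = -u$ from the residue, plus a bounded-plus-linear contribution from the holomorphic part $(a_3 + b_3 i + O(1))\,dz$; the first two coordinates $f_1, f_2$ are, to leading order, the real parts of the linear functions $(a_1 + b_1 i)z$ and $(a_2 + b_2 i)z$ evaluated along $z = e^{-w}$, i.e. exponentially small, plus an $O(1)$ term and whatever the $O(1)\,dz$ tails integrate to. So up to an affine change of coordinates in the target (which is permitted), $f$ looks like $(x, y, -\log\sqrt{x^2+y^2})$ modulo errors that decay as one approaches the puncture.

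The key steps, in order: (1) Parametrize by $w = -\log z$ and integrate term by term, isolating the $-u$ growth in $f_3$ and showing $f_1 + i f_2 = c_1 e^{-w} + c_2 + (\text{integral of an } O(e^{-u}) \text{ one-form})$ where $c_1 \in \C^2$ has the two components $a_1 + b_1 i$ and $a_2 + b_2 i$. (2) Use the hypothesis that $a_1 + b_1 i$ and $a_2 + b_2 i$ are $\R$-linearly independent: this says exactly that the $\R$-linear map $\C \to \R^2$, $w \mapsto (\re((a_1+b_1 i)w), \re((a_2+b_2 i)w))$ — equivalently $(f_1, f_2)$ restricted to the leading term — is an isomorphism, hence $f$ is an immersion near the puncture and the map $v \mapsto (f_1, f_2)$ restricted to each circle $\{u = \text{const}\}$ is an embedded closed curve winding once, of "radius" $\asymp e^{-u}$. (3) Deduce properness: $|f_3| \to \infty$ as $u \to \infty$ while $(f_1, f_2) \to c_2$, so preimages of compact sets are compact, and the images of distinct circles $\{u = u_0\}$, $\{u = u_1\}$ are disjoint because their $f_3$-values differ by roughly $u_1 - u_0$ (the $O(1)$ corrections are controlled for $u$ large). (4) Conclude injectivity: distinct points on the same circle map to distinct points by step (2), distinct circles are separated by step (3), so $f$ is injective on $\{u > R\}$ for $R$ large, which after throwing away a compact collar is the statement of being a properly embedded end.

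The main obstacle is step (3)–(4): controlling the error terms uniformly. The $O(1)\,dz$ pieces in the $\omega_i$, once integrated, contribute a bounded holomorphic function plus possibly a further logarithmic term if $\omega_1$ or $\omega_2$ secretly has a residue — but the normal-form hypothesis rules that out for $\omega_1,\omega_2$, and the residue of $\omega_3$ is exactly $1$, so the only unbounded contribution is the controlled $-u$ in $f_3$. One must check that after subtracting the leading exponential and linear terms, the remainder in $(f_1,f_2)$ is $o(e^{-u})$ (so it cannot destroy the one-to-one winding on each circle) and the remainder in $f_3$ is $o(u)$, in fact $O(1)$ up to an added linear term $\re((a_3+b_3 i)) \cdot u$ that can be absorbed by a shear in the target. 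Making these estimates precise — essentially the only real work — is a routine application of the fact that a one-form $O(e^{-u})\,dw$ on the half-cylinder has an antiderivative that is $O(e^{-u})$, but it must be stated carefully since embeddedness is a global, not infinitesimal, property and the paper itself emphasizes (see Section \ref{sec:low}) that higher-order terms can matter for ends of larger order; here the order is $1$ and the leading term is robust enough that it does not.
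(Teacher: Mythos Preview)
Your approach correctly identifies the crucial role of the $\R$-independence hypothesis: it makes the leading-order map $z \mapsto (\re((a_1+b_1i)z), \re((a_2+b_2i)z))$ an $\R$-linear isomorphism $\C \to \R^2$, which is indeed where regularity and local injectivity come from. However, the global injectivity argument in steps (3)--(4) has a gap. You claim distinct circles $\{u=u_0\}$ and $\{u=u_1\}$ are separated by $f_3$, but $f_3 = -u + O(e^{-u})$ is not constant on circles; its oscillation on $\{u=u_0\}$ is of order $e^{-u_0}$. Since $u$ is a continuous parameter, there are pairs with $0<|u_1-u_0| < C e^{-u_0}$, and for these the $f_3$-ranges overlap. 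Step (2) gives injectivity on each circle and step (3) handles well-separated circles, but neither covers this regime, so the two steps do not add up to injectivity on $\{u>R\}$. (A minor side error: there is no ``linear term $\re((a_3+b_3i))\cdot u$'' in $f_3$; integrating $(a_3+b_3i)\,dz$ gives $(a_3+b_3i)z = O(e^{-u})$. This actually helps you, not hurts.)

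The paper closes this gap by a cleaner route that bypasses $f_3$ entirely for embeddedness. After normalizing to $\omega_1 = dz$, $\omega_2 = (i + cz + \cdots)\,dz$ via an affine map and a holomorphic coordinate change, it observes that $(f_1,f_2)$ extends smoothly across the puncture with invertible differential there, and invokes the inverse function theorem to pass to a \emph{non-holomorphic} coordinate in which $\omega_1 = dz$ and $\omega_2 = i\,dz$ exactly. Harmonicity is lost, but the end is now manifestly a graph of (the antiderivative of) the closed form $\omega_3$ over a punctured disk in the $x_1x_2$-plane, so embeddedness is immediate. You already have the key Jacobian computation in your step (2); the fix is simply to project onto $(f_1,f_2)$ and recognize the graph, rather than trying to slice by $f_3$.
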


\begin{proof}
After an affine transformation and a holomorphic change of coordinates (which does not affect harmonicity) in the domain $\D$, we can assume that  

\begin{align*} 
   \omega_1 = {}& \, dz \\
    \omega_2 = {}& \left( i+c z+ O(1)\right) \, dz \\
     \omega_3 = {}&\left( \frac1z  + O(1)\right) \, dz \\
\end{align*}

The independence assumption allows us to make the zeroth order terms of $\omega_1$ and $\omega_2$ equal to 1 and $i$, respectively.

This defines indeed a proper immersion near $0$.  For embeddedness, we can  use (by the inverse function theorem) a (non-holomorphic) diffeomorphism in the domain that will change the forms to 

\begin{align*} 
   \omega_1 = {}& dz \\
    \omega_2 = {}&  i\, dz \\
\end{align*}

with $\omega_3$ being a closed differentiable complex-valued 1-form in the punctured disk. This change makes the parametrization non-harmonic, but 
 identifies the end as a graph over a punctured disk in the $x_1x_2$-plane.
 \end{proof}

If the two leading coefficients of $\omega_1$ and $\omega_2$ are dependent over $\R$, the situation becomes complicated. One can easily construct examples that are neither embedded nor regular. But there are also cases where the end is still properly embedded:

\begin{example}\label{ex:subtle1}
The  end of order 1  given by 
\begin{align*} 
   \omega_1 = {}& \left(1+2(1+i)z+3iz^2\right) \, dz\\
    \omega_2 = {}& \left(1+2iz\right)\, dz \\
     \omega_3 = {}& \frac1z \, dz \\
\end{align*}
is properly embedded. To see this, consider the affinely equivalent end 
\begin{align*} 
   \omega_1 = {}& \left(2z+3iz^2\right) \, dz\\
    \omega_2 = {}& \left(1+2iz\right)\, dz \\
     \omega_3 = {}& \frac1z \, dz \\
\end{align*}
It suffices then to show that the $x_3$-level curves $t\mapsto \left(\cos(2t)-r\sin(3t), \cos(t)-r\sin(2t)\right)$
are regular and embedded. We leave the details to the reader.

%

More generally, consider the ends given by
\begin{align*} 
   \omega_1 = {}& \, dz\\
    \omega_2 = {}& \left(z +c i z^n \right)\, dz \\
     \omega_3 = {}& \frac1z \, dz \\
\end{align*}

and denote the resulting parametrization by $f$. Then the second coordinate of $f_x\times f_y$ is equal to $-y/(x^2+y^2)$ so that the end
is regular away from $y=0$. For $y=0$ we have
\[
f_x\times f_y = c\left( x^{n-1}, 0, x^n\right)
\]
so that the end is regular in a punctured disk if and only if $c\ne 0$.

\end{example}

\subsection{Ends of order 2}
In this section, we will recover the two embedded minimal ends and one new type of harmonic end.
As above,  we first derive normal forms of such ends. Given an end of order 2 but not less, one of the forms (say $\omega_3$) has a pole of order 2. If all forms have poles of order 2, the coefficients of the $1/z^2$ are dependent over $\R$. After an affine change of coordinates,  we can assume that $\omega_1$ has a pole of order at most 1. If there are two forms with a pole of order 1, then another affine change can be used to make one of them holomorphic. We say that an end is in reduced form when all these changes have been applied. Denote the type of the end by $(n_1,n_2,n_3)$, where $n_i$ is the order of the pole of $\omega_i$. Then the following types of ends of order 2 in reduced form can occur:

\begin{theorem}
\label{thm:order2}
After an affine change of coordinates, the following end types of order 2 can occur, with the indicated properties:
\begin{enumerate}
\item
$(1,2,2)$: catenoidal --- always properly embedded
\item
 $(0,2,2)$: planar --- always properly embedded
 \item 
 $(0,1,2)$ ---  can be properly embedded 
 \item 
 $(0,0,2)$ --- never proper
\end{enumerate}
\end{theorem}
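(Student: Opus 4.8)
The plan is to handle the four cases separately by means of the reductions to reduced form described above, reading off in each case the leading-order behavior of the three coordinate functions as $z\to 0$ and treating the remaining holomorphic and logarithmic contributions as lower-order perturbations. Throughout, write
\[
\int^z\omega_k \;=\; P_k(1/z) \;+\; (\res_0\omega_k)\,\log z \;+\; h_k(z),
\]
where $P_k$ is a polynomial with $P_k(0)=0$ of degree one less than the pole order of $\omega_k$, $\res_0\omega_k\in\R$, and $h_k$ is holomorphic on $\D$, hence bounded together with its differential on a fixed smaller disk. Thus $x_k=\re\int^z\omega_k$ splits into a homogeneous principal term, a logarithmic term, and a bounded smooth term.

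For cases (1) and (2), the two forms of pole order exactly $2$ have $\R$-linearly independent leading coefficients: a nontrivial real combination annihilating the $1/z^2$-terms would, after an affine change of coordinates, produce an end of lexicographically smaller type, contradicting reducedness. Hence an affine change acting only on the $x_2x_3$-plane normalizes the situation so that $x_2=\re(1/z)+(\mathrm{log})+(\mathrm{bounded})$ and $x_3=\re(i/z)+(\mathrm{log})+(\mathrm{bounded})$, that is $x_2+ix_3=1/\bar z+(\mathrm{log})+(\mathrm{bounded})$; in case (1) the logarithmic terms can in addition be absorbed into $x_1=a\log|z|+(\mathrm{bounded})$, $a\ne 0$, by a shear, while in case (2), where $x_1=\re h_1$ is bounded, they survive but are still of lower order. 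In either case the $x_2x_3$-projection is a perturbation of the inversion $z\mapsto 1/\bar z$ whose differential is $O(1/|z|)$ (the logarithm) plus $O(1)$ (the bounded part), dominated by the $O(1/|z|^2)$ singularity of the differential of $1/\bar z$. A quantitative mean-value estimate --- bounding $|1/\bar z_1-1/\bar z_2|=|z_1-z_2|/(|z_1||z_2|)$ from below against $|\log|z_1|-\log|z_2||\le |z_1-z_2|/\min(|z_1|,|z_2|)$ and the Lipschitz bound on the bounded part from above --- shows that on a sufficiently small punctured disk the $x_2x_3$-projection is an injective immersion onto a punctured neighborhood of infinity. Since $x_1$ is then a single-valued smooth function of $(x_2,x_3)$, $f$ is a graph over that punctured neighborhood, hence properly embedded; comparison of the leading term with the explicit catenoid end $\re(1/z,-i/z,-\log z)$ (resp. the observation that the end is a graph over a punctured disk in a plane) identifies it as catenoidal (resp. planar).

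For case (3) it suffices to exhibit one embedded example; I propose the data $\omega_1=i\,dz$, $\omega_2=dz/z$, $\omega_3=dz/z^2$, for which $f(z)=\bigl(-t,\ \tfrac12\log(s^2+t^2),\ -s/(s^2+t^2)\bigr)$ with $z=s+it$. This end has type $(0,1,2)$; it is proper because $x_2=\log|z|\to-\infty$ as $z\to 0$; it is injective because the first coordinate recovers $t$, the second then recovers $s^2$ hence $|s|$, and the third recovers the sign of $s$; and a short computation gives $f_s\times f_t=\bigl(t/\rho^2,\ (t^2-s^2)/\rho^2,\ s/\rho\bigr)$ with $\rho=s^2+t^2$, which vanishes only at $z=0$. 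Hence $f$ is a regular, proper, injective immersion of a punctured disk, i.e. a properly embedded end. For case (4), $\omega_1$ and $\omega_2$ are holomorphic, so $x_1=\re h_1$ and $x_2=\re h_2$ are bounded near $0$, while $\omega_3$ has a genuine pole of order $2$; thus on each circle $|z|=r$ the principal part $\re(-\beta/z)$ of $x_3$ ranges over $[-|\beta|/r,\,|\beta|/r]$ with $\beta\ne 0$, and for $r$ small this interval dominates the $O(\log r)$ correction, so by the intermediate value theorem there is $z_r$ with $|z_r|=r$ and $x_3(z_r)=0$. Then $|f(z_r)|\le \|h_1\|_\infty+\|h_2\|_\infty$ while $z_r\to 0$, so the preimage of a large closed ball is noncompact in $\D^*$ and the end is not proper.

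The only genuinely delicate point is the perturbation argument in cases (1) and (2): one must verify that the logarithmic terms --- and, in the planar case, the non-removable residues --- are negligible against the $1/z$-singularity \emph{uniformly} as $z\to 0$, for which the explicit estimate above is needed. By contrast, case (3) is subtle only in that the lower-order terms of the data must be arranged to break the $t\mapsto -t$ symmetry that would otherwise obstruct injectivity, and case (4) is a soft compactness argument.
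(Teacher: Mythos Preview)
Your argument is correct and follows essentially the same architecture as the paper's: cases (1) and (2) are handled by showing the end is a graph over a punctured neighborhood of infinity in the $x_2x_3$-plane, case (3) by an explicit example, and case (4) by finding points on each circle $|z|=r$ where the third coordinate is bounded. The only stylistic difference is that in (1)--(2) the paper phrases the graph property as a smooth (non-holomorphic) change of coordinates straightening $\omega_2,\omega_3$ to exactly $z^{-2}\,dz$ and $i z^{-2}\,dz$, whereas you prove injectivity of the $x_2x_3$-projection directly via the mean-value estimate $|1/\bar z_1-1/\bar z_2|\gg|\log|z_1|-\log|z_2||$; these are the same fact in different clothing. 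Your example in (3) differs from the paper's by the placement of the factor $i$ (you put it on $\omega_1$, the paper on $\omega_3$), but both break the $t\mapsto -t$ symmetry and both work.
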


Note again that all residues of the first order poles must be real, but the coefficients of the second order poles are not affected. Note that the theorem lists all possibilities of ends of order two in reduced form. We will now prove this theorem by discussing these cases individually.

\begin{lemma}
Any catenoidal or planar end  is always properly embedded.
\end{lemma}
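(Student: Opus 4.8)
The plan is to analyze the two cases of "catenoidal" and "planar" ends separately, using the explicit normal forms implicit in Theorem \ref{thm:order2}, and in each case produce an explicit diffeomorphism of a punctured neighborhood of $0$ onto a standard model that exhibits the end as a graph.

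\medskip

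First I would treat the \emph{catenoidal} case $(1,2,2)$. After an affine change of coordinates and a holomorphic reparametrization of $\D^*$, I would bring the data to the standard catenoidal form, essentially
\[
\omega_1 = \left(\frac1z + O(1)\right)dz, \qquad \omega_2 = \left(\frac{a}{z^2}+O(1)\right)dz, \qquad \omega_3 = \left(\frac{b}{z^2}+O(1)\right)dz,
\]
with $(a,b)$ real-linearly independent (this independence is exactly what distinguishes the catenoidal type from the degenerate subcase); indeed the classical catenoid has $\omega_2, \omega_3$ with genuine double poles spanning a plane while $\omega_1$ contributes the logarithmic growth. Writing $z = \rho e^{i\theta}$, I would compute that $x_1 = \re\int^z\omega_1 \sim \log\rho \to -\infty$ as $\rho\to 0$, so $x_1$ is a proper exhaustion function on the end; and the map $\theta \mapsto (x_2,x_3)$ traces, for each small $\rho$, an embedded closed curve that is a small perturbation of an ellipse of size $\sim 1/\rho$ (the double-pole terms dominate the $O(1)$ terms after integration). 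Properness then follows because the $x_2x_3$-diameter of each level set of $x_1$ tends to infinity; embeddedness follows because the level sets are disjoint embedded loops shrinking in $x_1$. To make this rigorous I would invoke the inverse function theorem exactly as in the proof of the order-1 proposition: there is a smooth (non-holomorphic) change of coordinates in $\D^*$ straightening the end into a graph of a smooth function over an annular neighborhood of $\infty$ in a suitable plane, with the radial variable controlled by $x_1$ and the angular variable by the dominant elliptic motion.

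\medskip

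For the \emph{planar} case $(0,2,2)$, the argument is analogous but simpler: here $\omega_1$ is holomorphic, so $x_1 = \re\int^z\omega_1$ extends continuously (indeed smoothly) to $z=0$, while $\omega_2,\omega_3$ have double poles with real-linearly independent leading coefficients, so $(x_2,x_3)$ again sweeps out ellipse-like embedded loops of size $\sim 1/\rho \to\infty$. Thus the end is asymptotically a graph over the exterior of a large disk in the $x_2x_3$-plane, with $x_1$ the bounded graphing function extending across the puncture; properness is immediate from $|(x_2,x_3)|\to\infty$, and embeddedness from the nested-loops picture together with the inverse function theorem, exactly as above. In both cases I would note that residues of the order-1 poles are automatically real by hypothesis, so no monodromy obstructs the single-valuedness of the putative graphing function.

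\medskip

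The main obstacle I expect is the embeddedness statement rather than properness: properness is essentially a matter of identifying a coordinate function that blows up, but embeddedness requires ruling out self-intersections both within a single level set and between nearby level sets. The clean way to handle this uniformly is to show the Jacobian of $(x_1,\arg(z))\mapsto$ (radial, angular) coordinates is nonvanishing near $0$ — equivalently that $f_x\times f_y\neq 0$ and that the level sets of the proper coordinate are transverse to the "rotational" direction — and then apply the inverse function theorem to get a global diffeomorphism onto a standard annular graph; the leading-order elliptic model makes all these nondegeneracy conditions manifest, and the higher-order terms are controlled because after integration the double poles beat the $O(1)$ contributions on any sufficiently small punctured disk. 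I would relegate the routine estimates on the higher-order terms to the reader or to the companion computations, as the paper does elsewhere.
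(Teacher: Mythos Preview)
Your proposal is correct and follows essentially the same route as the paper: normalize via an affine change so that the double-pole coefficients of $\omega_2,\omega_3$ are $1$ and $i$ (their real-linear independence being exactly what keeps the end genuinely of type $(1,2,2)$ or $(0,2,2)$ rather than reducing further), then use a smooth, non-holomorphic change of coordinate in $\D^*$---just as in the order-$1$ proposition---to exhibit the end as a graph over the complement of a large disk in the $x_2x_3$-plane. The paper's write-up is slightly more streamlined in that it suppresses the level-set heuristics and simply records that after the smooth coordinate change one has $\omega_2=z^{-2}\,dz$, $\omega_3=i\,z^{-2}\,dz$ exactly, with $\omega_1$ a smooth closed $1$-form; your polar-coordinate discussion of $x_1\sim\log\rho$ and ellipse-like level curves is correct intuition but not needed once the graph structure is in hand.
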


\begin{proof}
Suppose first we are given a meromorphic end of type $(1,2,2)$ where the residue of $\omega_1$ at $0$ can be assumed to be $1$. By applying an affine transformation, we can also assume the the residues of $\omega_2$ and $\omega_3$ at $0$ are both 0. Thus we can write
\begin{align*} 
   \omega_1 = {}&  \left(\frac1z +O(1)\right)\, dz \\
    \omega_2 = {}&  \left( \frac {a_2+b_2 i}{z^2} +O(1)\right)\, dz \\
     \omega_3 = {}& \left( \frac {a_3+b_3 i}{z^2} +O(1)\right)\, dz \\
\end{align*}

If $a_2+b_2 i$ and $a_3+b_3 i$ are independent over $\R$, we can apply another affine transformation in the $x_2x_3$ plane to obtain
\begin{align*} 
   \omega_1 = {}&  \left(\frac1z +O(1)\right)\, dz \\
    \omega_2 = {}&  \left( \frac {1}{z^2} +O(1)\right)\, dz \\
     \omega_3 = {}& \left( \frac {i}{z^2} +O(1)\right)\, dz\\
\end{align*}
Using a smooth change of coordinate in $\D^*$, we can finally achieve that
\begin{align*} 
   \omega_1 = {}& smooth \\
    \omega_2 = {}&   \left(\frac {1}{z^2} \right)\, dz \\
     \omega_3 = {}&  \left(\frac {i}{z^2}  \right)\, dz\\
\end{align*}
where $\omega_1$ is a smooth closed 1-form without periods in $\D^*$. This proves that the end is a graph over the complement of a large disk in the $x_2x_3$-plane. It is also clearly regular and proper.

Now assume that $a_2+b_2 i$ and $a_3+b_3 i$ are dependent over $\R$. Then an affine change of coordinates reduces this type of end further to type $(1,0,2)$, which will be discussed below as an end of type $(0,1,2)$.

The planar case $(0,2,2)$ is treated in the same way.
\end{proof}

\begin{figure}[h]
	\centerline{ 
	\includegraphics[width=2.5in]{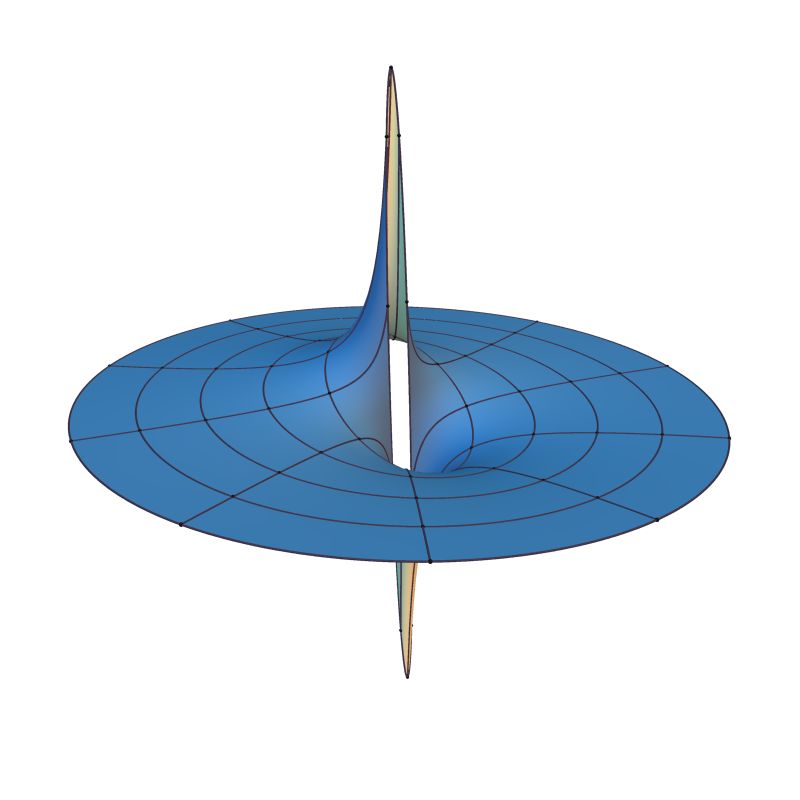}
	}
	\caption{Embedded but non-proper end of type  $(0,0,2)$}
	\label{figure:(0,0,2)}
\end{figure}

The case $(0,0,2)$ is a special case of the more general
\begin{lemma}
\label{lemma:00n}
For $n\ge2$, an  end of type $(0,0,n)$ is never proper.
\end{lemma}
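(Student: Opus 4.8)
The plan is to prove non-properness directly, by exhibiting a sequence $z_k\to 0$ in $\D^*$ along which the image $f(z_k)$ stays inside a fixed compact set; then for a sufficiently large closed ball $K\subset\R^3$ the set $f^{-1}(K)$ contains $\{z_k\}$ and hence fails to be compact in $\D^*$, contradicting properness. The first, easy, reduction is that for an end of type $(0,0,n)$ the forms $\omega_1$ and $\omega_2$ are holomorphic at $0$, so $\int^z\omega_1$ and $\int^z\omega_2$ extend holomorphically across the puncture (no residues, no period issues), and consequently the first two coordinates $f_1,f_2$ are bounded near $0$. Thus everything comes down to finding a sequence $z_k\to0$ along which the third coordinate $f_3=\re\int^z\omega_3$ remains bounded.

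Next I would expand $\omega_3$. Integrating its principal part and using that the residue $r=\res_0\omega_3$ is real, one writes $\int^z\omega_3 = Q(1/z)+r\log z+g(z)$, where $Q$ is a polynomial of degree $n-1$ with nonzero leading coefficient $\alpha$, and $g$ is holomorphic near $0$; hence $f_3(z)=\re Q(1/z)+r\log|z|+\re g(z)$, which is indeed single valued on $\D^*$. Setting $w=1/z$, the task becomes: show that the continuous function $F(w)=\re Q(w)-r\log|w|+\re g(1/w)$, defined for $|w|$ large, has zeros with $|w|$ arbitrarily large. Any such zero produces $z_k=1/w$ with $f_3(z_k)=0$ and $z_k\to 0$.

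The heart of the argument is an intermediate value argument on circles $|w|=R$. On such a circle the leading term $\re(\alpha w^{n-1})=|\alpha|R^{n-1}\cos\big((n-1)\theta+\arg\alpha\big)$ oscillates and attains values close to $+|\alpha|R^{n-1}$ and to $-|\alpha|R^{n-1}$ (here one uses that $(n-1)\theta$ sweeps an interval of length at least $2\pi$); the remaining monomials of $Q$ contribute only $O(R^{n-2})$, and $-r\log|w|+\re g(1/w)$ contributes only $O(\log R)$. Since $n-1\ge 1$, for all sufficiently large $R$ the oscillating part dominates all corrections, so $F$ takes a strictly positive and a strictly negative value on $\{|w|=R\}$; by continuity it vanishes at some $w_R$ on that circle. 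Choosing $R_k\to\infty$ and $z_k=1/w_{R_k}$ completes the construction and the proof.

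The only delicate point — and the step I expect to be the real obstacle — is this domination estimate: one must rule out that the lower-order polynomial terms of $Q$ or the logarithmic residue term $r\log|w|$ conspire to keep $\re Q(w)$ from changing sign on large circles. This is exactly where the hypothesis $n\ge2$ is used: it forces $\deg Q\ge1$, so the oscillating part grows like $R^{n-1}$ with $n-1\ge1$, overwhelming the $O(\log R)$ contribution of the residue. (For $n=1$ the polynomial $Q$ is constant, $f_3$ is essentially monotone in $\log|z|$, there is no oscillation, and such an end can in fact be proper — consistent with the order-$1$ discussion above.)
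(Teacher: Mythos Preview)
Your argument is correct and follows the same underlying idea as the paper's proof: on circles $|z|=\mathrm{const}$ the leading term $\re(\alpha z^{-(n-1)})$ of $f_3$ oscillates with amplitude growing like $|z|^{-(n-1)}$, which dominates both the $O(\log|z|)$ residue contribution and the bounded holomorphic tail, so by the intermediate value theorem $f_3$ vanishes somewhere on each small circle and the end cannot be proper.

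The one methodological difference is that the paper first invokes a normalization result (Proposition~4.1 of \cite{clw1}) which, via a holomorphic change of coordinate, brings $\omega_3$ into the exact form $(z^{-n}+a/z)\,dz$ with $a\in\R$; this kills the intermediate principal-part terms and the holomorphic remainder, so the explicit integral $f_3(r,t)=ar-\frac{1}{n-1}e^{(1-n)r}\cos((n-1)t)$ can be written down and solved for $t$ directly. Your approach instead keeps the full Laurent expansion and absorbs those extra terms into the $O(R^{n-2})+O(\log R)+O(1)$ error, which makes the argument self-contained and avoids the dependence on the companion paper, at the cost of a slightly less explicit computation. Both routes are clean; yours is arguably more elementary.
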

\begin{proof}
We use the normalization of 1-forms via holomorphic coordinate changes (Proposition 4.1 from \cite{clw1}) which implies  that we can assume that $\omega_1$ and $\omega_2$ are holomorphic, while $\omega_3 = \left(z^{-n}+a/z)\right)\, dz$ with some real number $a$.

We have to show that there is a sequence $z\to0$ such that $f(z)$ remains bounded. As the first two coordinate 1-forms are holomorphic at $0$, we will do so for $f_3(z)$.
Integrating $\omega_3$ in polar coordinates $z=e^{r+i t}$  gives
\[
f_3(r,t) =a r-\frac{1}{n-1} e^{(1-n)r} \cos((n-1)t)
\]
For $r\ll 0$, this can be made 0 by solving for $t$. 
\end{proof}

We finally discuss ends of type $(0,1,2)$. Here, the embeddedness will depend in a  subtle way  on the higher order terms in the Laurent expansion of the coordinate 1-forms. 

 By making a holomorphic coordinate change and by scaling the surface, if necessary, in the coordinate directions,  we can assume that 
 \begin{align*} 
   \omega_1 = {}& \left(e^{i\phi} +O(z)\right) \, dz \\
    \omega_2 = {}& \frac {1}{z} \, dz\\
     \omega_3 = {}& \left(e^{i\psi}\frac {1}{z^2} +O(1)\right)\, dz
\end{align*}

\begin{figure}[h]
	\centerline{ 
	\includegraphics[width=2.5in]{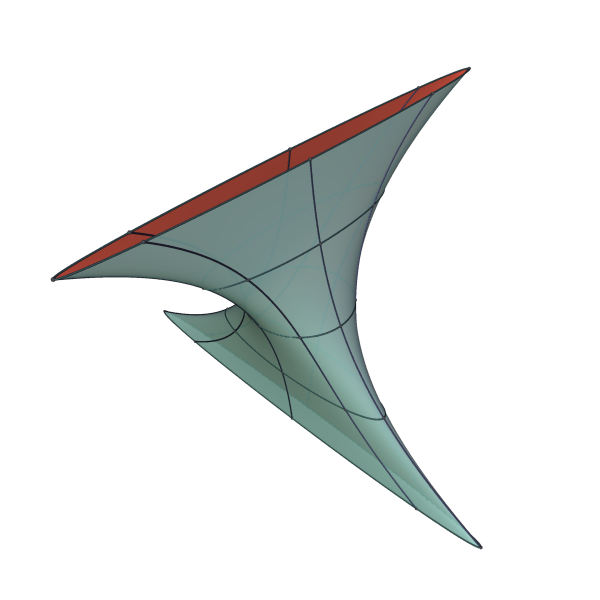}
	}
	\caption{Embedded sphere with  two $(0,1,2)$ ends}
	\label{figure:(0,1,2)x2}
\end{figure}

\begin{example}
A prototype of an end of type $(0,1,2)$ is  given by
 \begin{align*} 
   \omega_1 = {}& dz \\
    \omega_2 = {}&  \frac {1}{z} \, dz\\
     \omega_3 = {}&\frac i{z^2}\, dz
     \end{align*}
which integrates in polar coordinates  to 
\[
f(r,t) = \left( e^r \cos(t), r, e^{-r}\sin(t)\right)
\]
which is clearly regular and properly embedded. Geometrically, the end at  $z=0$ lies in an arbitrarily thin half slabs $x_2>r$ and $|x_3|<e^{-r}$. Note that the prototype above has two ends of type $(0,1,2)$ at $0$ and $\infty$ that are asymptotic to half slabs that are turned by $90^\circ$ against each other. While one can affinely  change the angle between the two ends, the maximum principle prevents the ends from becoming parallel.
\end{example}

More generally, we conjecture 
that in the case  that $\phi\ne -\psi$ and $\phi\ne \pi-\psi$, the surface is embedded as well, because the level curves in the planes $x_2=const$ appear nearly elliptical.

However, when say  $\phi= -\psi$, the situation becomes more subtle. 

\begin{example}
\label{ex:subtle2}
For simplicity and to illustrate the difficulties, let's compare
 \begin{align*} 
   \omega_1 = {}& 1\, dz   \\
    \omega_2 = {}&  \frac {1}{z}\, dz \\
     \omega_3 = {}&\left( \frac {1}{z^2} +i \right)\, dz
\end{align*}

to

 \begin{align*} 
   \omega_1 = {}& 1 \, dz \\
    \omega_2 = {}&  \frac {1}{z} \, dz\\
     \omega_3 = {}& \left(\frac {1}{z^2} + i z\right)\, dz
\end{align*}

The first  one is  easily seen to be embedded (albeit barely), while the second one satisfies $f(i t) = f(- it)$ for all $t\in\R$.
Thus it appears that embeddedness will depend in a rather subtle way
on the coefficients.
\end{example}

%


\subsection{Ends of order 3}
\label{sec:order3}
There are nine different types of ends of order 3, four of which can be embedded. As in the case of ends of type $(0,1,2)$, the embeddedness will depend in subtle ways on higher order terms in the Laurent expansion. It will turn out that the embedded ends of this section have immediate generalizations to embedded ends of higher order.

We begin with the usual normalization:
 Given an end of order 3 but not less, one of the forms (say $\omega_3$) has a pole of order 3. If all forms have poles of order 3, the coefficients of the $1/z^3$ are dependent over $\R$. Thus we can assume that $\omega_1$ has a pole of order at most 2. Denote the type of the end by $(n_1,n_2,n_3)$, where $n_k$ is the order of the pole of $\omega_k$. Then we have:

\begin{theorem}
\label{thm:order3}
The following types of reduced properly embedded ends of order 3 can occur:
An end of type 
 $(2,2,3)$ is always properly embedded. Ends of type 
 $(1,2,3)$,  $(0,2,3)$, or   $(0,1,3)$ can be properly embedded under suitable conditions.
 All other possible ends are of one of the following types and never properly embedded: 
 $(2,3,3)$, $(1,3,3)$, $(0,3,3)$, $(0,0,3)$.
\end{theorem}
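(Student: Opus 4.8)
The plan is to dispose of the enumeration first and then handle the four ``embeddable'' and the four ``bad'' types by the two mechanisms already used for Theorem \ref{thm:order2} and Lemma \ref{lemma:00n}. For the enumeration, one argues exactly as for orders $1$ and $2$: one form, say $\omega_3$, has a pole of order $3$; whenever a second (or third) form also has a pole of order $3$, the leading coefficients are three vectors in $\R^2$ (or two parallel ones), hence $\R$-linearly dependent, so a regular affine map of $\R^3$ lowers the pole order of one of them, and iterating, re-ordering, and applying the same device to any two forms sharing the next-highest pole order leaves precisely the eight tuples $(n_1,n_2,n_3)$, $n_1\le n_2\le n_3=3$, named in the theorem as the irreducible ones (for instance $(1,1,3)$, having two real first-order residues, reduces to $(0,1,3)$, and $(3,3,3)$ reduces down the list). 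This leaves a case-by-case discussion.

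For type $(2,2,3)$: since the end is reduced, the leading coefficients of $\omega_1,\omega_2$ are $\R$-independent, so after an affine map in the $x_1x_2$-plane we may write $\omega_1=(z^{-2}+\cdots)\,dz$, $\omega_2=(iz^{-2}+\cdots)\,dz$, $\omega_3=(c_3 z^{-3}+\cdots)\,dz$ with $c_3\neq0$. Integrating, and using reality of the residues, $f_1+if_2=-\overline{z^{-1}}+h(z)$ with $\|h\|_{C^0}=O(\log\frac1{|z|})$ and $\|Dh\|_{C^0}=O(\frac1{|z|})$, which is $o$ of the expansion rate $\sim|z|^{-2}$ of the model map $z\mapsto-\overline{z^{-1}}$; a standard scale-by-scale perturbation estimate then shows $(f_1,f_2)$ is a diffeomorphism of a small punctured disk onto a deleted neighborhood of $\infty$ in the $x_1x_2$-plane, so the end is the graph $x_3=f_3\circ(f_1,f_2)^{-1}$ over that region (single-valued since $\res_{0}\omega_3\in\R$), hence regular, embedded and proper. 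For $(0,1,3)$, $(0,2,3)$, $(1,2,3)$ only existence is needed, so I would exhibit explicit data such as
\[
(0,1,3):\ \omega_1=dz,\ \omega_2=\tfrac1z\,dz,\ \omega_3=\bigl(\tfrac1{z^3}+i\bigr)dz;
\]
\[
(0,2,3):\ \omega_1=dz,\ \omega_2=\tfrac{i}{z^2}\,dz,\ \omega_3=\tfrac1{z^3}\,dz;
\]
\[
(1,2,3):\ \omega_1=\tfrac1z\,dz,\ \omega_2=\tfrac1{z^2}\,dz,\ \omega_3=\bigl(\tfrac1{z^3}+i\bigr)dz,
\]
and verify properness, regularity ($f_r\times f_t\neq0$) and injectivity by writing $f$ in polar coordinates $z=e^{r+it}$ and recovering $(r,t)$ from $f(r,t)$. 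The reason such a verification can be made to work, and the reason for the phrase ``under suitable conditions'', is that the $z^{-3}$ pole forces the relevant planar projection to wrap twice, so one must arrange (i) that the zero-rays of the leading terms of the two lower-order forms are distinct, so that no full strand collapses onto a coordinate axis, and (ii) that no residual symmetry $z\mapsto\bar z$ (all-real Laurent coefficients) or $z\mapsto-z$ fixes $f$; naive choices such as $\omega_3=z^{-3}\,dz$ in the $(0,1,3)$ data, or $\omega_2=z^{-2}\,dz$ in the $(0,2,3)$ data, fail to be embedded.

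For the bad types: $(0,0,3)$ is Lemma \ref{lemma:00n} with $n=3$. For $(2,3,3)$, $(1,3,3)$, $(0,3,3)$: reducedness makes the leading $z^{-3}$-coefficients of $\omega_2,\omega_3$ $\R$-independent, so after an affine map in the $x_2x_3$-plane, $\omega_2=(z^{-3}+\cdots)\,dz$, $\omega_3=(iz^{-3}+\cdots)\,dz$, whence $G:=(f_2,f_3)$ satisfies $f_2-if_3=-\tfrac12 z^{-2}+O(\frac1{|z|})$. By the same perturbation estimate, on a small punctured disk $G$ is a proper local diffeomorphism onto a deleted neighborhood of $\infty$ in the $x_2x_3$-plane, hence a connected double cover, with a fixed-point-free real-analytic deck involution $\tau$ (a perturbation of $z\mapsto-z$). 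Setting $D:=f_1-f_1\circ\tau$, the relations $G\circ\tau=G$ and $\tau^2=\mathrm{id}$ give $D\circ\tau=-D$, so by continuity on the connected domain $D$ vanishes at some $z_0$ in every punctured subdisk (trivially if $D\equiv0$, otherwise because $D$ then takes opposite nonzero values). Then $f(z_0)=f(\tau(z_0))$ with $z_0\neq\tau(z_0)$, so $f$ is not injective on any punctured neighborhood of $0$; since $|f|\to\infty$ at the puncture, the end is proper but never embedded. This is uniform in $n_1\in\{0,1,2\}$ and in all remaining Laurent coefficients, which is exactly what ``never properly embedded'' requires.

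The step I expect to be the main obstacle is justifying the model behavior of the planar projections in the presence of the lower-order Laurent tails: that $(f_1,f_2)$ is an honest diffeomorphism onto a neighborhood of $\infty$ in the $(2,2,3)$ case, and that $(f_2,f_3)$ is an honest $2$-to-$1$ covering admitting a well-defined fixed-point-free deck involution in the three $(\cdot,3,3)$ cases. This needs a careful estimate — e.g.\ on dyadic shells $\{\rho\le|z|\le2\rho\}$, comparing the size of the error with the expansion rate of the model map after rescaling — together with a degree/homotopy argument to pin the covering multiplicity at $2$; once this is in hand, the enumeration, the graph argument, the explicit examples and the $D\circ\tau=-D$ trick are all routine. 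The remaining real annoyance is the bookkeeping of (i)--(ii) needed to produce genuinely embedded examples of the three borderline types.
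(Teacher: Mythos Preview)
Your overall structure matches the paper's: enumerate the reduced types, then handle them case by case, citing Lemma \ref{lemma:00n} for $(0,0,3)$, showing $(2,2,3)$ is always a graph, exhibiting explicit embedded examples for $(1,2,3)$, $(0,2,3)$, $(0,1,3)$, and proving non-embeddedness for the $(\cdot,3,3)$ types. The paper in fact organizes the proof as a list of forward references to Propositions \ref{thm:22n}, \ref{thm:12n}, \ref{prop:(0,2,3)}, \ref{thm:(0,1,n)} and Lemmas \ref{lemma:00n}, \ref{lm.(j,k,k)}.

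The one genuine difference is your treatment of the $(n_1,3,3)$ cases. You set up the $2$-to-$1$ planar projection $(f_2,f_3)$, invoke covering-space theory to produce a fixed-point-free deck involution $\tau$, and then use the antisymmetry $D\circ\tau=-D$ of $D=f_1-f_1\circ\tau$ to force a zero of $D$ and hence a self-intersection. This is correct, but the paper's Lemma \ref{lm.(j,k,k)} does it in two lines by the classical minimal-surface winding argument: after normalizing the leading coefficients of $\omega_2,\omega_3$ to $1$ and $i$, any small circle about $0$ projects to a curve in the $x_2x_3$-plane with winding number $2$ about the origin, so the intersection of the end with a large round cylinder about the $x_1$-axis cannot be embedded. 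That argument sidesteps entirely the perturbation/degree estimates you flag as the ``main obstacle''. Likewise, for $(2,2,n)$ the paper avoids dyadic-shell estimates by moving the end to $\infty$ and making a holomorphic coordinate change so that $\omega_1=dz$ exactly, after which the graphicality of $(f_1,f_2)$ is immediate. Your explicit examples for $(0,1,3)$, $(0,2,3)$, $(1,2,3)$ differ from the paper's (which are given as complete two-ended spheres rather than bare ends), but either set suffices for the existence claim.
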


\begin{proof}
This theorem will follow from the sequences of propositions as follows:
\begin{itemize}
\item $(2,2,3)$: In Proposition \ref{thm:22n} we show that all ends of type $(2,2,n)$ are properly embedded for $n\ge3$.
\item $(1,2,3)$:  In Proposition \ref{thm:12n} we show that there are properly embedded ends of type $(1,2,n)$ for $n\ge3$.
\item $(0,2,3)$:  In Proposition \ref{prop:(0,2,3)} we show that there are properly embedded ends of type $(0,2,3)$ for $n\ge3$.
\item $(0,1,3)$:  In Proposition \ref{thm:(0,1,n)} we show that there are properly embedded ends of type $(0,1,n)$ for $n\ge3$.
\item $(0,0,3)$: In Lemma \ref{lemma:00n} we show that for $n\ge2$, ends of type $(0,0,n)$ are never proper.
\item $(2,3,3)$, $(1,3,3)$, $(0,3,3)$: In Lemma \ref{lm.(j,k,k)} we show that reduced ends of type $(n_1,n_2,n_2)$ with $n_2\ge 3$ cannot be embedded.
\end{itemize}
\end{proof}

The following general lemma implies in particular that the ends of type $(2,3,3)$, $(1,3,3)$, and $(0,3,3)$
are never embedded:

\begin{lemma}
\label{lm.(j,k,k)}
A reduced end of type $(n_1,n_2,n_2)$ with  $n_2>2$ cannot be embedded.
\end{lemma}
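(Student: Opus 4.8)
The plan is to prove that the parametrization $f$ cannot be injective on any punctured neighborhood of the puncture. Writing $n:=n_2=n_3\ (\ge 3)$, the mechanism is that the planar projection $(f_2,f_3)$ is, near the puncture, an $(n-1)$-sheeted covering onto a neighborhood of $\infty$, after which a telescoping argument on the third coordinate $f_1$ forces two of the $n-1$ sheets to collide. As a preliminary I would note that reduced form forces the leading Laurent coefficients $c_2,c_3$ of $\omega_2,\omega_3$ at $0$ (the coefficients of $z^{-n}\,dz$) to be independent over $\R$: if $c_3=\lambda c_2$ with $\lambda\in\R$, then $\omega_3-\lambda\omega_2$ has pole order $<n$, so the affine transformation $x_3\mapsto x_3-\lambda x_2$ produces an affinely equivalent end of strictly smaller type, contradicting minimality. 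Since $\{c_2,c_3\}$ is then an $\R$-basis of $\C$, a real affine transformation confined to the $x_2x_3$-plane (which leaves $\omega_1$ unchanged) normalizes these coefficients to $1$ and $i$, so we may assume
\[
\omega_2=\left(z^{-n}+O(z^{-n+1})\right)dz,\qquad \omega_3=\left(i\,z^{-n}+O(z^{-n+1})\right)dz .
\]
The form $\omega_1$, whatever its pole order, will play only a passive role, which is why the statement is insensitive to $n_1$.

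Next I would study $g:=f_2+if_3$. Differentiating $f_j=\re\int\omega_j$ yields $\partial_{\bar z}g=\tfrac12\,\overline{(\omega_2-i\omega_3)/dz}$ and $\partial_z g=\tfrac12\,(\omega_2+i\omega_3)/dz$; under the normalization the first has a pole of order exactly $n$ (leading coefficient $2$) while the second has pole order $\le n-1$. Consequently $g(z)=-\tfrac1{n-1}\,\bar z^{-(n-1)}\bigl(1+o(1)\bigr)$ as $z\to0$, and $|\partial_{\bar z}g|\gg|\partial_z g|$ on a small punctured disk, so $g$ is there an orientation-reversing local diffeomorphism. For $R$ large, $V:=g^{-1}(\{|w|>R\})$ is then a punctured-disk neighborhood of $0$, and I would check that $g|_V\colon V\to\{|w|>R\}$ is proper, with winding number $n-1$ on small circles $|z|=\rho'$; by "proper local homeomorphism $\Rightarrow$ covering" this makes $g|_V$ a connected $(n-1)$-sheeted covering, necessarily normal (the base has abelian fundamental group), with deck group cyclic of order $n-1$. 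Let $\gamma$ be a generator: $\gamma$ acts freely on $V$ and $g\circ\gamma=g$, so $(f_2,f_3)(\gamma z)=(f_2,f_3)(z)$ and $\gamma z\ne z$ for all $z\in V$.

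The double point is then produced by a one-line argument. Set $\Psi:=f_1-f_1\circ\gamma$, a continuous real function on the \emph{connected} set $V$. Because $\gamma^{n-1}=\mathrm{id}$, telescoping gives $\sum_{k=0}^{n-2}\Psi(\gamma^k z)=f_1(z)-f_1(\gamma^{n-1}z)=0$ for every $z\in V$. If $\Psi$ were nowhere zero it would have constant nonzero sign on $V$, and this sum of $n-1\ge 2$ terms of equal sign could not vanish --- a contradiction. Hence $\Psi(z_0)=0$ for some $z_0\in V$, i.e.\ $f_1(z_0)=f_1(\gamma z_0)$; combined with $(f_2,f_3)(z_0)=(f_2,f_3)(\gamma z_0)$ and $z_0\ne\gamma z_0$ this gives $f(z_0)=f(\gamma z_0)$ with distinct preimages, so $f$ is not injective near the puncture and the end cannot be embedded.

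The step that will require real work is the covering claim in the second paragraph: controlling that the $o(1)$ corrections in $g$ --- in particular the $\log|z|$ terms coming from the (real) residues of $\omega_2,\omega_3$, and the intermediate pole terms of orders $2,\dots,n-1$ --- do not disturb the covering structure of the model map $z\mapsto\bar z^{-(n-1)}$. I expect to dispose of this through the quantitative comparison $|\partial_{\bar z}g|\sim|z|^{-n}$ against $|\partial_z g|=O(|z|^{-(n-1)})$ (which gives the local-diffeomorphism property and the orientation), a properness check at $\partial V$ and at the puncture, and a winding-number count to pin the degree at $n-1$; the remainder is soft point-set topology together with the telescoping identity. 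Note that the argument degenerates exactly when $n_2=2$ (then $g$ is one-to-one), consistent with ends of type $(0,2,2)$ and $(1,2,2)$ being embedded.
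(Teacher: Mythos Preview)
Your argument is correct and shares with the paper the same opening move: reducedness forces the leading coefficients of $\omega_2,\omega_3$ to be $\R$-independent, so after an affine change they become $1$ and $i$, and then the planar projection $g=f_2+if_3$ behaves like $-\tfrac{1}{n-1}\bar z^{-(n-1)}$ near the puncture, an $(n-1)$-sheeted map onto a neighborhood of $\infty$.

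Where you and the paper part ways is in how the double point is extracted from this. The paper argues geometrically: the intersection of the surface with a large round cylinder about the $x_1$-axis is a closed curve (essentially the image of a small circle $|z|=\rho$) whose projection to the $x_2x_3$-plane has winding number $n-1\ge2$; since a simple closed curve on a cylinder can only represent the trivial class or a generator of $\pi_1$, the curve cannot be embedded. Your route is more analytic: you upgrade the winding-number observation to a genuine $(n-1)$-sheeted covering $g|_V$, take a generating deck transformation $\gamma$, and use the telescoping identity $\sum_{k=0}^{n-2}\bigl(f_1(\gamma^k z)-f_1(\gamma^{k+1}z)\bigr)=0$ together with the intermediate value theorem on the connected set $V$ to locate an explicit $z_0$ with $f(z_0)=f(\gamma z_0)$. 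The paper's argument is shorter but leans on a fact about embedded loops on cylinders that is left to the reader; your argument is longer but entirely self-contained and pinpoints the double point rather than merely asserting its existence. The portion you flag as ``real work'' (verifying that the lower-order and logarithmic terms do not spoil the covering structure) is exactly the step the paper suppresses when it passes from the leading-order winding-number computation to the intersection with the cylinder; your outlined approach via $|\partial_{\bar z}g|\gg|\partial_z g|$ and properness is the right way to make it precise.
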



\begin{proof}
The proof is an adaptation of the well known argument that shows that the only complete embedded minimal ends of
finite total curvature are planar or catenoidal:
We can assume that the coefficients  of $1/z^{n_2}$ in $\omega_2$ and $\omega_3$ are $1$ and $i$, respectively --- otherwise an affine transformation could reduce the order of one end. 

This shows that the image of any simple closed curve around 0 near 0 projects onto the $x_2x_3$-plane as a curve with winding number $n_2-1>1$ with respect to $0$.

Thus, if we intersect the end with a cylinder about the $x_1$-axis of sufficiently large radius, the  intersection curve will wind $n_2-1$ times about the $x_1$-axis, thus can't be embedded.
\end{proof}

Note that this Lemma also applies to ends of type say $(4,3,3)$. It is thus rather surprising that we can show that existence of embedded ends where all $n_k\ge 3$, see Example \ref{ex:346}.

By Lemma \ref{lemma:00n}, an end of type $(0,0,3)$ is never proper. This, together with Lemma \ref{lm.(j,k,k)}, proves the last claim of the theorem.

We now turn to ends of type $(2,2,n)$ for $n\ge2$:

\begin{example}
The harmonic surfaces given by
 \begin{align*} 
   \omega_1 = {}& \, dz  \\
    \omega_2 = {}&  i\, dz \\
     \omega_3 = {}& z^n \, dz
\end{align*}
are graphs over the $xy$-plane with an end  of type $(2,2,n+2)$ at $\infty$. 
In the case $n=3$, this is the hyperbolic paraboloid.
\end{example}

Ends of type $(2,2,3)$ are always embedded as graphs. More generally, we have:

\begin{proposition}\label{thm:22n} An end of type $(2,2,n)$ is always properly embedded.
\end{proposition}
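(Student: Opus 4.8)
The plan is to prove the stronger \emph{local} statement that if $\omega_1,\omega_2$ have poles of order exactly $2$ at the puncture with $\R$-independent leading coefficients, then near the puncture the end is a graph $x_3=h(x_1,x_2)$ over the complement of a large disk in the $x_1x_2$-plane; graphs are automatically regular and embedded, and properness will fall out of the unbounded growth of $(f_1,f_2)$. The form $\omega_3$ plays no role. First the reduction: since the end has type $(2,2,n)$ it is affinely equivalent to one in reduced form with pole orders $(2,2,n)$, and as proper embeddedness is preserved by affine maps of $\R^3$ we may assume the end itself is this reduced representative. Then $\omega_1,\omega_2$ have poles of order exactly $2$, and their leading coefficients $c_1,c_2\in\C$ at $z^{-2}$ are independent over $\R$ — otherwise a real combination of $\omega_1,\omega_2$ has a pole of order $\le1$ and, used in place of $\omega_1$, yields an affinely equivalent end of strictly smaller type, contradicting reducedness. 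A further linear automorphism of $\R^3$ acting on the $(\omega_1,\omega_2)$-coordinates by the real change of basis carrying $\{c_1,c_2\}$ to $\{1,i\}$, and leaving the pole order $n>2$ of $\omega_3$ untouched, lets us assume
\[
\omega_1=\Bigl(\tfrac1{z^2}+\tfrac{r_1}{z}+O(1)\Bigr)\,dz,\qquad
\omega_2=\Bigl(\tfrac{i}{z^2}+\tfrac{r_2}{z}+O(1)\Bigr)\,dz,
\]
with $r_1,r_2\in\R$ the (necessarily real) residues.

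I would then study the planar map $F:=f_1+if_2$. Putting $u=-1/z$, so that $u\to\infty$ as the puncture is approached, integration gives
\[
F=\bar u-(r_1+ir_2)\log|u|+H(u),
\]
where $H$ is smooth on $\{|u|>R\}$ and built from holomorphic functions of $1/u$ near $0$, hence is bounded with $DH(u)\to0$ as $u\to\infty$. Consequently $F$ is proper, and $DF(u)=D(\bar u)+o(1)$ has $\det DF(u)\to-1$, so for $R$ large $F$ is an orientation-reversing local diffeomorphism of $\{|u|>R\}$.

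The main step — and the one I expect to be the only genuine obstacle — is to upgrade this local diffeomorphism to a diffeomorphism onto a punctured neighborhood of $\infty$; since $\{|u|>R\}$ is an annulus, not a disk, global injectivity is not automatic, and I would establish it by a winding-number/degree argument. For $R$ large, $F$ restricted to the circle $\{|u|=R\}$ is a Jordan curve $\Gamma$ — the leading term $\bar u$ keeps distinct points of the circle a distance $\gtrsim R$ apart, while $H$ is only $O(1)$ — and $\Gamma$ has winding number $0$ about each point of its unbounded complementary component $\Omega$. Exhaust $\{|u|>R\}$ by compact annuli $\{R\le|u|\le R'\}$; for a fixed $\zeta\in\Omega$ and $R'$ large, properness of $F$ shows all preimages of $\zeta$ lie in this annulus, and standard degree theory gives that the signed number of preimages equals $\operatorname{wind}(F|_{\{|u|=R'\}},\zeta)-\operatorname{wind}(\Gamma,\zeta)=(-1)-0=-1$ (on $\{|u|=R'\}$, $F$ traces a clockwise near-circle of radius $R'$). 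Since every sheet of the local diffeomorphism reverses orientation, $\zeta$ has exactly one preimage, whereas points enclosed by $\Gamma$ have none; as $F$ is an open map it cannot hit $\Gamma$ from $\{|u|>R\}$ either. Thus $F$ maps $\{|u|>R\}$ diffeomorphically onto $\Omega$, and $\Omega\supseteq\{|\zeta|>C\}$ for some $C$.

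Finally, restricting the end to a suitably small punctured disk about the puncture, $(f_1,f_2)=F$ is a diffeomorphism onto the planar region $\Omega$, so the end is exactly the graph of the smooth function $h:=f_3\circ F^{-1}$ over $\Omega$; in particular it is a regular embedding, and since $|f|\ge|F|\to\infty$ at the puncture, preimages of compact sets are compact, so the end is properly embedded. The only delicate bookkeeping in the whole argument is the tracking of orientations and winding numbers in the degree computation; everything else reduces to routine estimates.
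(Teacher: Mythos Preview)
Your argument is correct and follows the same overall strategy as the paper: show that near the puncture the map $(f_1,f_2)$ is a proper diffeomorphism onto a neighbourhood of infinity in the plane, so the end is the graph of $f_3$ over that region. The difference lies in how this diffeomorphism is obtained. The paper moves the end to $\infty$ and then performs a further \emph{holomorphic} change of the domain coordinate so that $\omega_1=dz$ exactly; after arranging that only $\omega_2$ carries a residue one has $(f_1,f_2)=(x,-y+a_1\log|z|+O(1/|z|))$, and injectivity is immediate from monotonicity of $f_2$ on the vertical lines $f_1=\text{const}$. You instead keep the general form $F(u)=\bar u-(r_1+ir_2)\log|u|+H(u)$ and run a winding-number/degree argument on annuli. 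Both work; the paper's extra normalisation buys a one-line injectivity proof, while your approach avoids that normalisation at the cost of the degree computation. One small wording issue: the claim that ``the leading term $\bar u$ keeps distinct points of the circle a distance $\gtrsim R$ apart'' is not literally true for nearby points; what you actually need (and have) is that $DH\to0$, so the full map $F$ satisfies $|F(u_1)-F(u_2)|\ge c|u_1-u_2|$ on $\{|u|\ge R\}$ for large $R$. In fact this Lipschitz estimate already gives global injectivity directly, so the degree argument, while correct, is not needed.
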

\begin{proof}
We place the end at $\infty$ and show that it is graphical.
In general, the leading coefficients of $\omega_1$ and $\omega_2$ are independent over $\R$, as otherwise the type of the end could be reduced.  Furthermore, we can assume that at most $\omega_2$ has a simple pole. Then there is a  holomorphic change of coordinate after which $\omega_1=dz$ and $\omega_2=\left( i+a_1/z +a_2/z^2+\cdots\right)\, dz$ in the complement of a disk. This given, it is easy to see that for $|z|$ large enough, the map $z\mapsto \re\int^z (\omega_1, \omega_2)$ is a diffeomorphism onto its image that is proper.
\end{proof}

It is easy to use these ends to construct higher genus surfaces akin to the Chen-Gackstatter surface and Thayer's generalizations \cite{tha3,sa1,ww1}, as done in section \ref{sec:surf223}.

Next we will discuss ends of type $(1,2,3)$ as part of a series of properly embedded spheres with an end of type $(1,2,n)$ and a second end of type $(0,0,1)$:

\begin{figure}[h]
	\centerline{ 
	\includegraphics[width=2.5in]{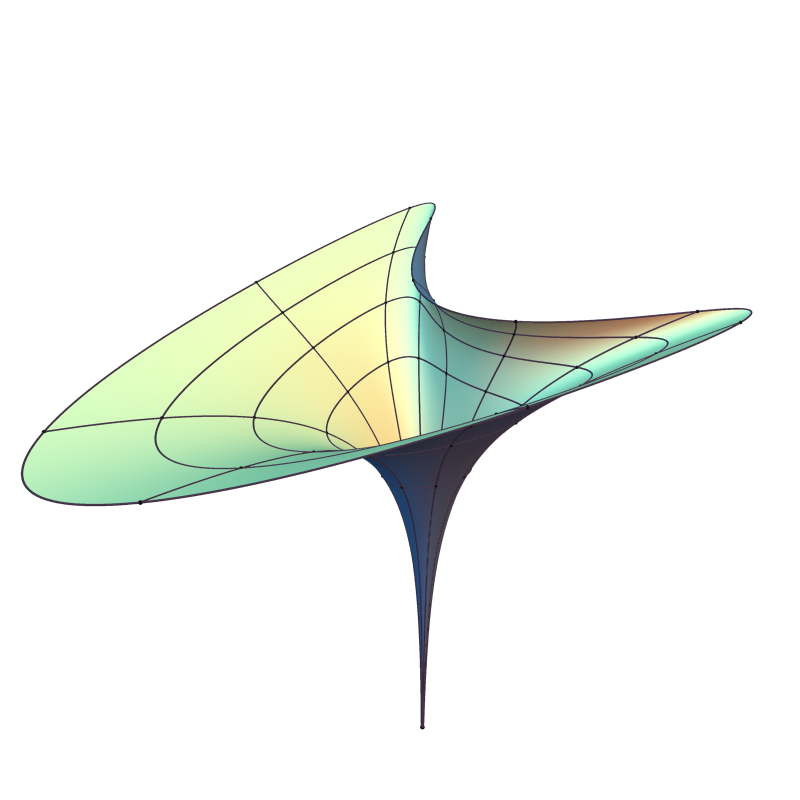}
	\includegraphics[width=2.5in]{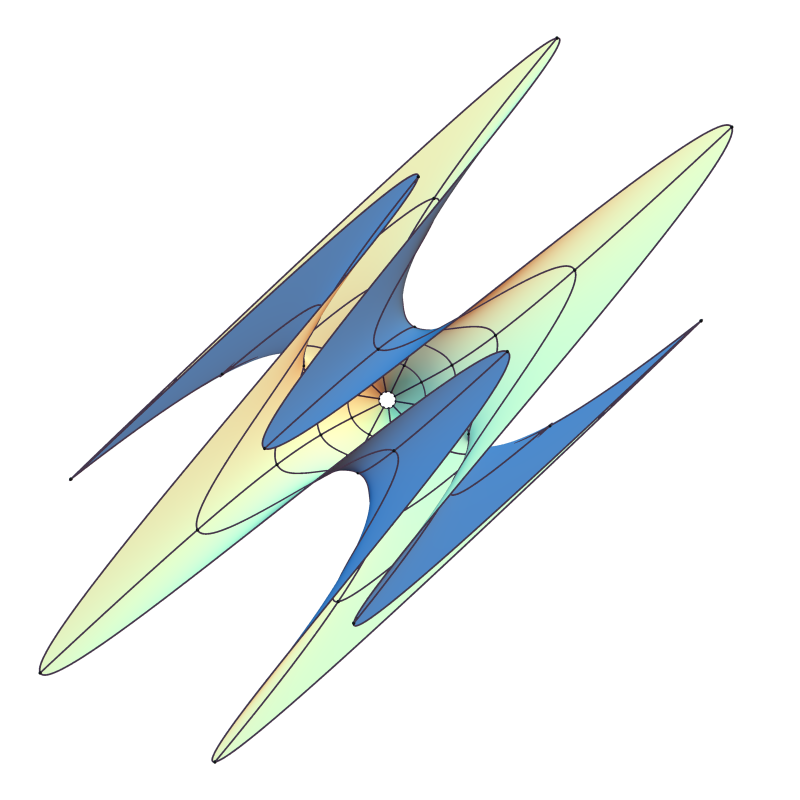}
	}
	\caption{Embedded spheres with  one $(1,2,3)$- resp. $(1,2,8)$-end}
	\label{figure:End(1,2,n)}
\end{figure}

\begin{proposition}\label{thm:12n}
The surfaces  given for $n\ge 3$ on $\C^*$ by
 \begin{align*} 
   \omega_1 = {}& \frac1z \, dz  \\
    \omega_2 = {}&   \, dz \\
     \omega_3 = {}&\left( i+z^{n-2}\right) \, dz\ .
\end{align*}
are complete and properly embedded.
They have at  $0$ an end  of type $(1,0,0)$ and at $\infty$ an end of type  $(1,2,n)$.
\end{proposition}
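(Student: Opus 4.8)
The plan is to exploit that $x_1\circ f=\log\lvert z\rvert$, so the circles $\lvert z\rvert=\text{const}$ foliate $\C^*$ and $f$ carries each of them into a horizontal plane $\{x_1=\text{const}\}$. The surface is therefore graphical over the $x_1$-axis in a generalized sense, and its embeddedness reduces to the embeddedness of the planar slice curves, while completeness and properness follow from the behavior of the single coordinate $x_1$ at the two ends.

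First I would integrate the data. Since $\oint_{\lvert z\rvert=1}(\omega_1,\omega_2,\omega_3)=(2\pi i,0,0)$ is purely imaginary and $\C^*$ has no further homology, $f$ is a well-defined harmonic map, and in the coordinate $z=e^{r+it}$ one gets
\[
f(r,t)=\Bigl(r,\ e^{r}\cos t,\ -e^{r}\sin t+\tfrac{1}{n-1}e^{(n-1)r}\cos((n-1)t)\Bigr).
\]
Reading off the pole orders of $(\omega_1,\omega_2,\omega_3)$ gives $(1,0,0)$ at $z=0$; substituting $\zeta=1/z$ gives pole orders $(1,2,n)$ at $z=\infty$, and this tuple is already reduced, since any nontrivial affine combination involving $\omega_3$ still has a pole of order $n$ and no combination of $\omega_1,\omega_2$ alone can be made holomorphic (their pole orders $1$ and $2$ are distinct), so the first entry cannot be lowered to $0$.

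The geometric core is the analysis of the slice curves. Fix $r$ and put $a=e^{r}>0$, $b=e^{(n-1)r}/(n-1)>0$, $m=n-1\ge 2$; the image of $\{\lvert z\rvert=e^{r}\}$ is the closed curve $\gamma(t)=\bigl(a\cos t,\ -a\sin t+b\cos(mt)\bigr)$ in the plane $\{x_1=r\}$. I would show $\gamma$ is a regular embedded curve: if $\gamma(t_1)=\gamma(t_2)$ then $\cos t_1=\cos t_2$, hence $t_2\equiv\pm t_1$, and in the case $t_2\equiv-t_1$ comparison of second coordinates forces $2a\sin t_1=0$, so $t_1\equiv t_2$; similarly $\gamma'(t)=0$ would force $\sin t=0$ and then $-a\cos t=0$, which is impossible. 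Since distinct circles lie over distinct values of $x_1$, $f$ is globally injective; since $\partial_r f$ has $x_1$-component $1$ while $\partial_t f=(0,\gamma'(t))$ is tangent to the regular curve $\gamma$ and has $x_1$-component $0$, these two vectors are everywhere independent, so $f$ is an immersion. Finally $x_1\circ f=\log\lvert z\rvert\to\pm\infty$ as $z\to\infty$ or $z\to 0$: a compact set in $\R^3$ bounds $x_1$, hence its $f$-preimage lies in a compact annulus in $\C^*$, so $f$ is proper; and the length of any divergent path in $\C^*$ is at least the total variation of $x_1$ along it, which is infinite, so the induced metric is complete. An injective proper immersion is a proper embedding, which finishes the proof.

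I do not expect a serious obstruction; the one point needing care is the injectivity and regularity of the slice curves $\gamma$ uniformly in $n$, which is exactly what the ``$\cos t_1=\cos t_2$ plus parity'' argument above handles, together with the (routine) verification that the $(1,2,n)$ end at $\infty$ is genuinely in reduced form.
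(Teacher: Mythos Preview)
Your proof is correct and follows essentially the same approach as the paper: both use that one coordinate of $f$ equals $\log|z|$, reducing embeddedness to the injectivity of the circular slice curves via the evenness of $\cos((n-1)t)$ in $t$, and both handle regularity and properness directly from the explicit parametrization. Your treatment is slightly more thorough---you verify completeness and the reduced-form condition on the $(1,2,n)$ end explicitly, and your regularity argument via the $x_1$-components of $\partial_r f$ and $\partial_t f$ is marginally cleaner than the paper's explicit cross-product computation---but the core argument is identical.
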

\begin{proof}
The statement about the order of the ends is clear. To check properness, we compute in polar coordinates
\[
f(r,t) = \left(r \cos(t), \log(r), \frac1{n-1} r^{n-1}\cos((n-1)t) - r \sin(t)\right)
\]
and observe that the second coordinate will tend to $\infty$ for $r\to 0$ or $r\to \infty$.
For regularity, we have
\[
f_r\times f_t = \left(
-\cos(t)-r^{n-2} \sin((n-1)t), r+r^{n-1}\sin((n-2)t), \sin(t)
\right)
\]
If $f_r\times f_t =0$, then $\sin(t)=0$, but then $f_r\times f_t =(\pm 1,r,0)$.

To check embeddedness, note that $f(r,t)$ determines $r$ via the second coordinate. Thus the first coordinate
determines $\cos(t)$. As $\cos((n-1)t)$ can be expressed as a polynomial in $\cos(t)$ for any integer $n$, the last coordinate can be used to also determine $\sin(t)$, and thus $t \pmod {2\pi}$
\end{proof}

\begin{figure}[h]
	\centerline{ 
	\includegraphics[width=2.5in]{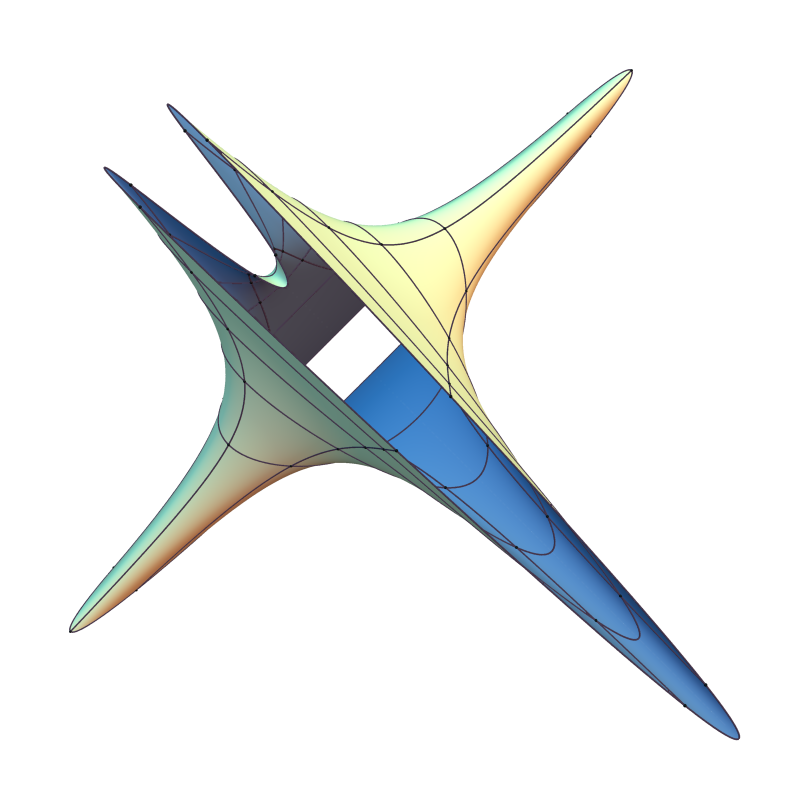}
	\includegraphics[width=2.5in]{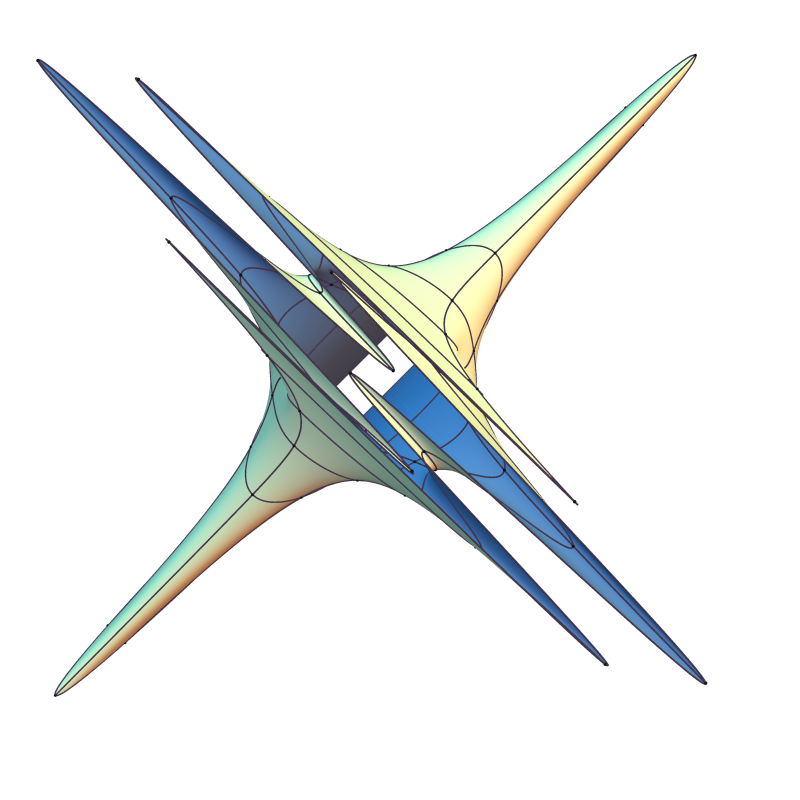}
	}
	\caption{Embedded spheres with  one $(0,1,3)$- resp. $(0,1,6)$-end}
	\label{figure:End(0,1,n)}
\end{figure}

Our examples of embedded ends of type $(0,1,3)$ are derived as a special case of an analogous series:

\begin{proposition}\label{thm:(0,1,n)}
The surfaces  given for $n\ge 3$ on $\C^*$ by
 \begin{align*} 
   \omega_1 = {}& \frac1{z^2} \, dz  \\
    \omega_2 = {}&  \frac1{z} \, dz \\
     \omega_3 = {}& \left(i+z^{n-2}\right) \, dz\ .
\end{align*}
are complete and properly embedded.
They have at  $0$ an end  of type $(0,1,2)$ and at $\infty$ an end of type  $(0,1,n)$.
\end{proposition}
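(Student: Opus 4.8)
The plan is to mimic the structure of the proof of Proposition \ref{thm:12n} almost verbatim, working in polar coordinates $z=e^{r+it}$ and checking completeness, properness, regularity, and embeddedness in turn. First I would integrate the given $1$-forms to obtain an explicit expression for $f(r,t)$: since $\omega_1=z^{-2}dz$, $\omega_2=z^{-1}dz$, $\omega_3=(i+z^{n-2})dz$, integration yields (up to constants)
\[
f(r,t)=\left(-e^{-r}\cos t,\; r,\; \tfrac1{n-1}e^{(n-1)r}\cos((n-1)t)-e^{r}\sin t\right).
\]
The order of the ends then follows by inspecting pole orders at $0$ and $\infty$: at $z=0$ the forms have pole orders $(2,1,0)$, which is already reduced, giving type $(0,1,2)$; at $z=\infty$, substituting $w=1/z$ raises the relevant orders and produces type $(0,1,n)$. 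Completeness and properness come from the $x_2=r$ coordinate, which is a proper function $\mathbb{R}\to\mathbb{R}$ and hence pushes both ends to infinity; this also handles properness of the immersion.

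For regularity I would compute $f_r\times f_t$ explicitly and show it never vanishes. The key observation, as in Proposition \ref{thm:12n}, is that one coordinate of the cross product is (up to a nonzero factor) $\sin t$ coming from $\omega_2$, so any critical point forces $\sin t=0$; substituting $\sin t=0$ back into the cross product then leaves a vector with a nonzero entry (built from $e^{-r}$ or $e^{(n-1)r}$ terms, which never vanish), giving a contradiction. For embeddedness I would argue that $f(r,t)$ recovers its preimage: the second coordinate determines $r$; the first coordinate $-e^{-r}\cos t$ then determines $\cos t$; and the third coordinate, together with the fact that $\cos((n-1)t)$ is a polynomial (Chebyshev) in $\cos t$, pins down $e^r\sin t$ and hence $\sin t$, so $t$ is determined mod $2\pi$. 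This is exactly the injectivity argument used for the $(1,2,n)$ series.

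The one place that needs a little more care than the analogous $(1,2,n)$ proof is confirming that the type at $\infty$ really is $(0,1,n)$ in \emph{reduced} form, i.e. that no affine transformation lowers the lexicographic tuple — in particular one must note that the leading behaviors of $\omega_1$ and $\omega_2$ at $\infty$ (orders $0$ and $1$ in the $w=1/z$ coordinate, since $z^{-2}dz=-dw$ and $z^{-1}dz=-dw/w$) cannot be combined to kill the simple pole, and that $\omega_3$'s pole of order $n$ is genuinely order $n$. I expect the main (though still modest) obstacle to be the regularity check: one must be sure that after setting $\sin t=0$ the surviving component of $f_r\times f_t$ is nonzero for \emph{all} $n\ge 3$ and all $r$, which requires tracking which exponential terms survive — unlike the $(1,2,n)$ case the leading $x_1$-contribution is $e^{-r}$ rather than a polynomial in $r$, but since $e^{-r}>0$ always, the argument still closes. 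No genuinely new ideas beyond those already deployed for Propositions \ref{thm:22n} and \ref{thm:12n} should be needed.
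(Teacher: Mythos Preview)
Your proposal is correct and follows essentially the same approach as the paper, which simply says ``Analogous to the proof of the previous theorem'' --- you have in fact spelled out that analogy in more detail than the paper does. One small slip: when $\sin t=0$, the surviving nonzero entry of $f_r\times f_t$ is the first component $-e^{(n-1)r}\sin((n-1)t)-e^r\cos t=\mp e^r$, built from the $e^r$ term rather than the $e^{-r}$ or $e^{(n-1)r}$ terms you mention; this is harmless since $e^r>0$ and the conclusion stands.
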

\begin{proof}
Analogous to the proof of the previous theorem.
\end{proof}

Ends of type $(0,2,3)$ are similar to the ones of type $(1,2,3)$.  They also stay in thinner and thinner slabs, but require more space. An embedded example with two such ends is given by

\begin{proposition}\label{prop:(0,2,3)}
The surface given by
\begin{align*}
\omega_1&=\left( -z+\frac1{z^3} \right)\, dz
\\
\omega_2&=\frac{i}{z^2}\, dz
\\
\omega_3 &=  dz
\end{align*}
is properly embedded and has two ends of type $(0,2,3)$ at $0$ and $\infty$.
\end{proposition}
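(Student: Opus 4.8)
The plan is to follow the same strategy as in Proposition \ref{thm:12n} and Proposition \ref{thm:(0,1,n)}: pass to polar coordinates $z = e^{r+it}$, integrate the three forms explicitly, and then recover $(r,t)\pmod{2\pi}$ from the coordinates of $f$. First I would compute the explicit parametrization. Integrating $\omega_3 = dz$ gives $f_3(r,t) = e^r\cos t$ up to an additive constant; integrating $\omega_2 = i/z^2\, dz$ gives $f_2(r,t) = \re\bigl(i\cdot(-1/z)\bigr) = e^{-r}\sin t$ (after simplification); and integrating $\omega_1 = (-z + z^{-3})\,dz$ gives $f_1(r,t) = \re\bigl(-\tfrac12 z^2 - \tfrac12 z^{-2}\bigr) = -\tfrac12 e^{2r}\cos 2t - \tfrac12 e^{-2r}\cos 2t = -\tfrac12(e^{2r}+e^{-2r})\cos 2t = -\cosh(2r)\cos 2t$. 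So, up to translation,
\[
f(r,t) = \bigl(-\cosh(2r)\cos 2t,\ e^{-r}\sin t,\ e^r\cos t\bigr).
\]

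Next I would check \emph{properness}. From $f_2^2 + f_3^2 = e^{-2r}\sin^2 t + e^{2r}\cos^2 t$, one sees this quantity is bounded below by a positive constant when $|r|$ is bounded, but that alone does not force $|f|\to\infty$; the cleaner observation is that $e^{-2r}f_2^2$-type combinations or, more directly, that when $\cos t$ and $\sin t$ are both small the first coordinate $|f_1| = \cosh(2r)|\cos 2t|$ blows up, while when either is bounded away from $0$ one of $f_2, f_3$ blows up. I would make this dichotomy precise to conclude $|f(r,t)|\to\infty$ as $r\to\pm\infty$, giving properness of both ends (and the stated types: at $\infty$, i.e.\ $r\to+\infty$, $\omega_1$ has a pole of order $2$ since $-z\,dz$ dominates, $\omega_2$ of order $0$, $\omega_3$ of order... one must be careful, but the forms are set up so that under $z\mapsto 1/z$ the pole orders at $0$ and $\infty$ both read $(0,2,3)$ after reduction, since $-z\,dz$ has a pole of order $3$ at $\infty$ and $z^{-3}\,dz$ has a pole of order $3$ at $0$).

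Then I would verify \emph{regularity} by computing $f_r \times f_t$ and showing it never vanishes, exactly as in the proofs above: compute the cross product, observe that the vanishing of one component forces $\sin t = 0$ or $\cos t = 0$, and check that at those loci the cross product is still nonzero (here the $\cosh(2r)$ and $e^{\pm r}$ factors prevent degeneration). Finally, for \emph{embeddedness}, I would run the coordinate-recovery argument: given $f(r,t)$, the pair $(f_2,f_3) = (e^{-r}\sin t, e^r\cos t)$ together with $f_1 = -\cosh(2r)\cos 2t$ should determine $(r,t)$ — note $f_2 f_3 = \tfrac12\sin 2t$ and $f_3^2 - f_2^2 = e^{2r}\cos^2 t - e^{-2r}\sin^2 t$, which combined with $f_1$ and the identity $\cos^2 2t + \sin^2 2t = 1$ gives enough equations to pin down $e^{2r}$ (hence $r$) and then $t\pmod{2\pi}$. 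The main obstacle I anticipate is this last step: unlike the earlier propositions, where one coordinate cleanly gave $r$, here $r$ is entangled with $t$ in all three coordinates, so showing injectivity requires solving a genuine system, and one must rule out spurious coincidences $f(r,t) = f(r',t')$ — the symmetry $f(r,t) = f(-r, \pi - t)$-type relations must be checked to be absent (in contrast to Example \ref{ex:subtle2}, where such a relation destroyed embeddedness). I would expect this to reduce, after eliminating variables, to checking that a certain explicit one-variable function is monotonic, which I would leave as a verification (possibly deferred to the companion website's computations).
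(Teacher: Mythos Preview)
The paper actually omits the proof of this proposition entirely, stating only that ``it involves no new ideas nor causes any difficulties.'' Your approach --- explicit integration in polar coordinates followed by checking regularity, properness, and injectivity --- is exactly the routine verification the paper has in mind, modeled on Propositions~\ref{thm:12n} and~\ref{thm:(0,1,n)}.

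Your anticipated obstacle in the embeddedness step does not in fact materialize. With (up to an inconsequential sign in $f_2$)
\[
f(r,t)=\bigl(-\cosh(2r)\cos 2t,\ -e^{-r}\sin t,\ e^{r}\cos t\bigr),
\]
one has the clean identity
\[
f_1 + f_3^2 - f_2^2 = -\cosh(2r)\cos 2t + e^{2r}\cos^2 t - e^{-2r}\sin^2 t = \sinh(2r),
\]
which recovers $r$ immediately (since $\sinh$ is strictly increasing). Then $f_3 = e^{r}\cos t$ and $f_2 = -e^{-r}\sin t$ determine $\cos t$ and $\sin t$, hence $t\pmod{2\pi}$. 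So the ``genuine system'' you worried about dissolves into a single monotone scalar equation, exactly as you predicted it might; no spurious coincidences of the Example~\ref{ex:subtle2} type arise. The same identity also gives properness for free: as $r\to\pm\infty$, $|\sinh(2r)|\to\infty$ forces $|f|\to\infty$. Regularity follows since the first component of $f_r\times f_t$ equals $\cos 2t$, and where this vanishes the second component is $\pm 2e^{r}\cosh(2r)\cos t\ne 0$.
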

We omit the proof, as it  involves no new ideas nor causes any difficulties. 

\begin{figure}[h]
	\centerline{ 
	\includegraphics[width=2.5in]{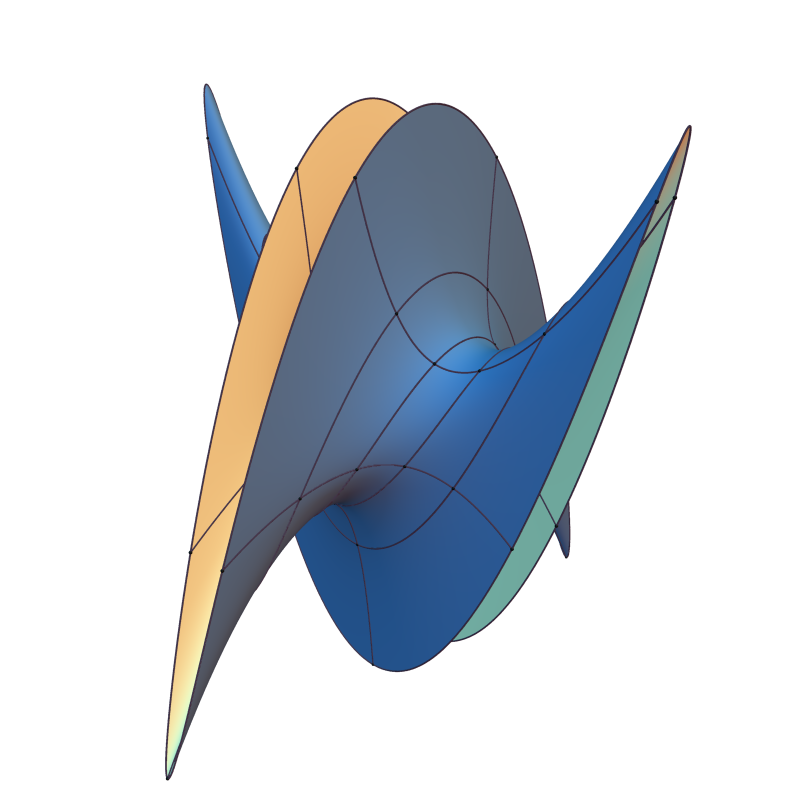}
	\includegraphics[width=2.5in]{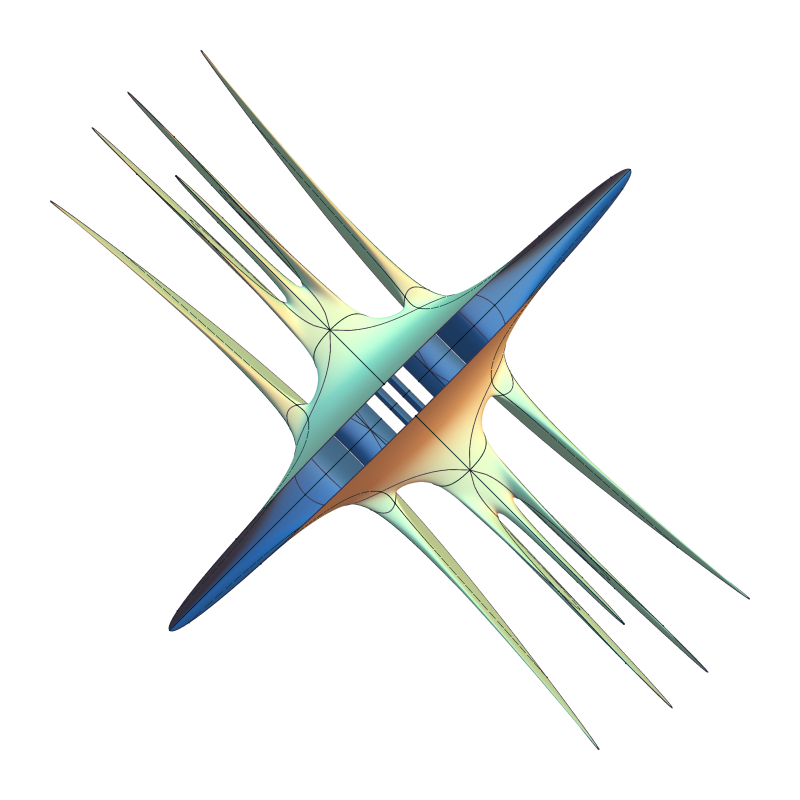}
	}
	\caption{Embedded spheres with  two and six ends of type $(0,2,3)$}
	\label{figure:End(0,2,3)}
\end{figure}

This concludes our discussion of ends of order $3$. At this point, it is completely open whether a similar classification of ends of higher of order can be achieved. 
We don't even know in general whether for a given type there exists a complete and properly embedded end of this type.

So far, we have seen that there are properly embedded ends of arbitrarily high order.
In the following section, we will showcase some extraordinarily complicated ends of high order that
resulted as part of our efforts to answer the above question.

\section{Families of ends of higher order}
\label{sec:high}

A key question is what types of {\em embedded} ends are possible for harmonic surfaces defined by meromorphic 1-forms.

In this section we will present three families of such ends. In fact, most of the ends in this section are given as the ends of complete, embedded surfaces. We also give two embedded ends (again as complete surfaces) that, while in the same spirit, do not fall in any of these three families.  Finally, we end the section with a large class of fairly complicated ends that we conjecture to be embedded.

\subsection{Ends of type \texorpdfstring{$(2,3,n)$}{(2,3,n)} and \texorpdfstring{$(2,3,2n)$}{(2,3,2n)}}
\label{sec:(2,3,n)}
For simplicity in the proofs, we place the ends at infinity.
\begin{proposition} \label{thm:(2,3,n)}
The surfaces given on $\C$ by 
\begin{align*}
\omega_1&= 1 \, dz
\\
\omega_2&=z\, dz
\\
\omega_3 &= \left(z^{n}+i\right) \, dz\qquad n\ge2
\end{align*}
and
\begin{align*}
\omega_1&= i \, dz
\\
\omega_2&=i z\, dz
\\
\omega_3 &= \left( z^{2n}+1\right) \, dz \qquad n\ge1
\end{align*}
are regular proper embeddings.
\end{proposition}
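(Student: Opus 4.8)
The plan is to treat both families by the same strategy used in Propositions~\ref{thm:12n} and \ref{thm:(0,1,n)}: integrate the forms explicitly in polar coordinates $z = re^{it}$, use one coordinate to control properness, compute $f_r \times f_t$ directly to rule out branch points, and finally show that the image point determines $(r,t) \bmod 2\pi$ so the map is injective. Since the surfaces are defined on all of $\C$ (no puncture), I only need to worry about the behavior as $r \to \infty$ for properness; regularity and injectivity must be checked globally.

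For the first family, integrating gives, up to an additive constant,
\begin{align*}
f(r,t) &= \Bigl( r\cos t,\ \tfrac12 r^2 \cos(2t),\ \tfrac1{n+1} r^{n+1}\cos((n+1)t) - r\sin t \Bigr).
\end{align*}
First I would establish \emph{properness}: the second coordinate is $\tfrac12 r^2\cos 2t$ and the first is $r\cos t$, so on any sequence escaping to infinity in the domain, $(f_1,f_2)$ escapes in $\R^2$ (if $\cos t$ stays bounded away from $0$ use $f_1$; near $\cos t = 0$ one has $|\cos 2t|\to 1$, so $f_2 \sim -\tfrac12 r^2$). Next, \emph{regularity}: compute $f_r \times f_t$ and observe its first two entries are (a constant times) $r\cdot(\text{something})$ built from $\omega_1 \wedge \omega_2$-type terms; the key point, exactly as in Proposition~\ref{thm:12n}, is that the component of $f_r\times f_t$ coming from $\omega_1 \wedge \omega_2 = 1\cdot z\, dz\wedge d\bar z$ (i.e.\ the $x_3$-component of the cross product) is a nonzero multiple of $r$, hence vanishes only at $r=0$; and at $r=0$ the map is clearly an immersion because $\omega_1, \omega_2, \omega_3$ do not all vanish there. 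Finally, \emph{injectivity}: given $f(r,t)$, the pair $(f_1, f_2) = (r\cos t, \tfrac12 r^2\cos 2t) = (r\cos t, r^2\cos^2 t - \tfrac12 r^2)$ determines $r\cos t$ and $r^2$, hence $r$ and $\cos t$, hence $t$ up to sign; then $f_3$ involves $\sin t$ linearly (the $\cos((n+1)t)$ term is a polynomial in $\cos t$, hence already known) with nonzero coefficient $-r\neq 0$ for $r>0$, pinning down $\sin t$ and so $t \bmod 2\pi$. The point $z=0$ is the unique preimage of $f(0)$ since it is the only point with $r=0$.

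The second family is handled the same way: after factoring out $i$ one has effectively $\omega_1 = dz$, $\omega_2 = z\,dz$, $\omega_3 = (z^{2n}+1)\,dz$ rotated by multiplication by $i$ in each coordinate (an isometry of $\R^3$, so it does not affect regularity, properness, or embeddedness), and integrating gives the same shape of formula with $n+1$ replaced by $2n+1$ and the roles of $\cos$ and $-\sin$ in the first two coordinates interchanged; the same three arguments apply verbatim. \emph{The main obstacle} is the injectivity step: one must verify that the two "quadratic" coordinates really do recover $r$ and $\cos t$ without ambiguity (this uses $\cos 2t = 2\cos^2 t - 1$ so that $f_2 + \tfrac12 f_1^2/\cos^2 t$... — more cleanly, $r^2 = 2f_2 + r^2$ is circular, so instead note $f_1^2 = r^2\cos^2 t$ and $f_2 = \tfrac12 r^2(2\cos^2 t - 1) = f_1^2 - \tfrac12 r^2$, giving $r^2 = 2(f_1^2 - f_2)$ directly), and that the leading coefficient of $\sin t$ in $f_3$ genuinely never vanishes for $r > 0$. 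These are elementary but need to be written out carefully; I would relegate the bookkeeping to a sentence and leave the polynomial identities to the reader, as the paper does for the analogous propositions.
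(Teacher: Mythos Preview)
Your overall strategy is sound and your injectivity argument for the first family is correct and rather neat (the identity $r^2 = 2(f_1^2 - f_2)$ is a clean way to recover $r$). The paper, incidentally, works in Cartesian coordinates $z = x+iy$ rather than polar, which for the second family makes the injectivity completely transparent: $f(x,y) = (-y, -xy, \frac{1}{2n+1}x^{2n+1} + x + \text{(terms in }y))$, so $f_1$ determines $y$, then $f_2$ determines $x$ when $y\neq 0$, and when $y=0$ the third coordinate reduces to the strictly monotone $\frac{1}{2n+1}x^{2n+1}+x$.

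There are, however, two genuine gaps in your write-up. First, the regularity step: the $x_3$-component of $f_x\times f_y$ is $\im(\omega_1\overline{\omega_2}) = -y$, i.e.\ $-r^2\sin t$ in polar, not ``a nonzero multiple of $r$''. It vanishes along the entire real axis, not only at the origin, so you must check another component there. (For the first family on $y=0$ one finds $-\im(\omega_1\overline{\omega_3}) = 1$; for the second, $1+x^{2n}$; this is exactly what the paper does.) Second, the reduction of the second family to the first is incorrect: multiplying two of the three $\omega_j$ by $i$ replaces $\re$ by $-\im$ in those coordinates, which is \emph{not} induced by any isometry of $\R^3$. The second family is not a rotated copy of the first --- the coordinate functions $(-y,-xy,\ldots)$ have genuinely different algebraic shape than $(r\cos t,\tfrac12 r^2\cos 2t,\ldots)$, and your polar recipe $r^2 = 2(f_1^2-f_2)$ fails for it (one gets instead $r^2 = f_1^2 + (f_2/f_1)^2$ when $f_1\neq 0$, with the line $f_1=0$ needing separate treatment). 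The arguments are analogous but not verbatim; you need to write the second case out.
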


\begin{proof}
We only prove this result for the second set of surfaces.  The proof for the first set is slightly easier.

The last coordinate of $f_x\times f_y$ is equal to $\im(\omega_1 \overline {\omega_2}) = -y$ so that 
$f$ can only have singularities when $y=0$, that is $z$ is real.  The second coordinate of $f_x\times f_y$ is equal to $-\im(\omega_1 \overline {\omega_3})$, which evaluates $1+z^{2n}$ when $z$ is real. So $f$ is regular everywhere.

To see that $f$ is an embedding, we compute
\[
f(x,y) = \left(-y, -xy, \frac{\left(x^2+y^2\right)^{n+\frac{1}{2}} \cos \left((2n+1)
   \arctan\left(y/x\right)\right)}{2n+1}+x\right)
\]
If $f(x_1,y_1) = f(x_2,y_2)$, we then have $y_1=y_2$ from the first coordinate. If these are nonzero, then $x_1 = x_2$ from the second coordinate.  If $y=0$, then the third coordinate simplifies to $\frac{1}{2n+1}x^{2n+1} + x$ which is a strictly increasing function.  This implies that $x_1 = x_2$ and so $f$ is one-to-one.

To see that $f$ is proper, we consider a sequence $(x,y)\to\infty$.
If $y$ is unbounded, so is the first coordinate of $f(x)$. Thus we can assume that $y$ is bounded. Thus $x\to\infty$. If $y$ is bounded away from $0$, the second coordinate of $f(x)$ converges to $\infty$. Thus we can assume that $y\to 0$. But then the last coordinate is approximately $\frac{1}{2n+1} x^{2n+1} + x$ which is unbounded. 
\end{proof}

\begin{figure}[h]
	\centerline{ 
	\includegraphics[width=2.5in]{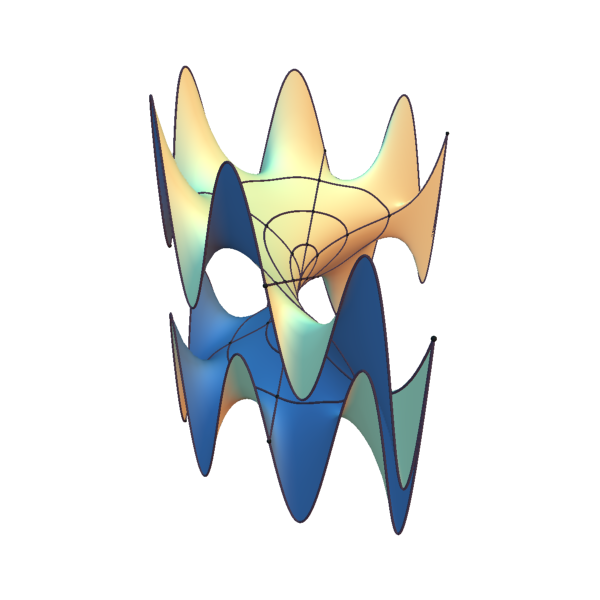}
		\includegraphics[width=2.5in]{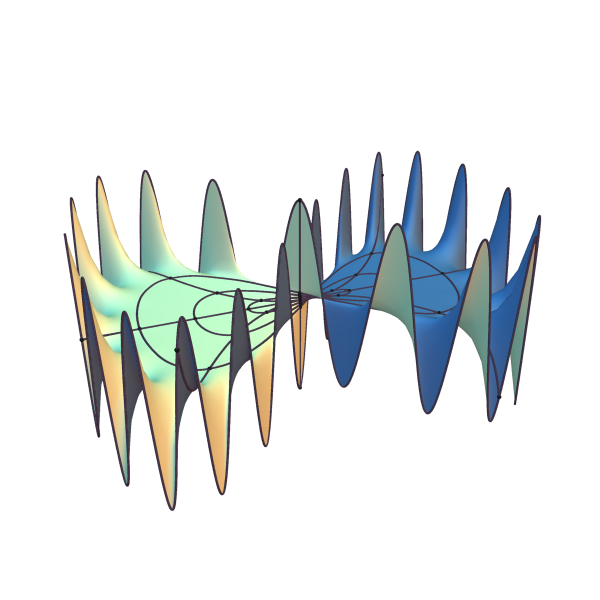}
	}
	\caption{Embedded sphere with  one $(2,3,n)$-end- and $(2,3,2n)$-end}
	\label{figure:End(2,3,n)}
\end{figure}

\subsection{Ends of type \texorpdfstring{$(2,4,2n)$}{(2,4,2n)}}

The ends of  this section are forming surfaces with a very sharp ``edge'' where two ``sheets'' fold very close
to each other. 


\begin{proposition}
The surfaces given on $\C$ by 
\begin{align*}
\omega_1&= 1 \, dz
\\
\omega_2&=z^2\, dz
\\
\omega_3 &= \left(z^{2n}+i\right) \, dz
\end{align*}
for $n \geq 2$ are regular embeddings  that are not proper.
\end{proposition}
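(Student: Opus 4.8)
\emph{Proof plan.}
The plan is to write the map out in real coordinates and then verify, in turn, regularity, injectivity, and non-properness; only the last is substantial. Integrating $f=\re\int^z(\omega_1,\omega_2,\omega_3)$ gives $f_1(z)=x$, $f_2(z)=\frac13\re(z^3)=\frac13(x^3-3xy^2)$, and $f_3(z)=\frac1{2n+1}\re(z^{2n+1})-y$, which in polar coordinates $z=re^{i\theta}$ reads $\frac{r^{2n+1}}{2n+1}\cos((2n+1)\theta)-r\sin\theta$. I would record two elementary facts about $p(x,y):=\re((x+iy)^{2n+1})$: expanding by the binomial theorem, only even powers of $y$ survive in the real part, and the surviving monomials are $x^{2n+1-k}y^{k}$ with $k$ even, each carrying a positive (odd) power of $x$; in particular $p(x,y)=(-1)^n(2n+1)\,xy^{2n}+R(x,y)$, where every term of $R$ is divisible by $x^3$, so that $|R(x,y)|\le C(n)\,|x|^3|y|^{2n-2}$ whenever $|x|\le 1$ and $|y|\ge 1$. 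These asymptotics are what drive the non-properness.

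For regularity I would use the standard formula $f_x\times f_y=\bigl(\im(g_2\overline{g_3}),\,\im(g_3\overline{g_1}),\,\im(g_1\overline{g_2})\bigr)$, where $\omega_k=g_k\,dz$, so here $(g_1,g_2,g_3)=(1,z^2,z^{2n}+i)$. Its third component is $\im(\overline{z^2})=-2xy$, which vanishes exactly on the two coordinate axes $\{x=0\}\cup\{y=0\}$. But on either axis $z^{2n}$ is real — for $z=iy$ one has $z^{2n}=(-1)^ny^{2n}\in\R$ — so there the middle component equals $\im(z^{2n})+1=1\ne0$. Hence $f_x\times f_y$ never vanishes and $f$ is a regular immersion on all of $\C$.

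For injectivity, suppose $f(z_1)=f(z_2)$. The first coordinate gives $x_1=x_2=:x$. If $x=0$ then $z_1,z_2$ are purely imaginary and $f_3(iy)=-y$ (the cosine term vanishes, $\re((iy)^{2n+1})=0$), forcing $y_1=y_2$. If $x\ne0$, the second coordinate gives $xy_1^2=xy_2^2$, so $y_2=\pm y_1$; and if $y_2=-y_1\ne0$, i.e.\ $z_2=\overline{z_1}$, then $\re(\overline{z_1}^{2n+1})=\re(z_1^{2n+1})$ yields $f_3(z_2)-f_3(z_1)=2y_1\ne0$, a contradiction. So $f$ is one-to-one; combined with the immersion property, this is the sense in which the end is ``embedded'' here — note that $f$ is \emph{not} a homeomorphism onto its image, consistently with the failure of properness proved next.

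The crux is non-properness. In contrast to the surfaces of Proposition~\ref{thm:(2,3,n)}, where $\omega_2=z\,dz$ forces $f_2=\frac12(x^2-y^2)$ to blow up along every sequence with $|y|\to\infty$, here $f_2=\frac13x^3-xy^2$ stays bounded along any sequence with $x\to0$ and $xy^2$ bounded, so it no longer ``traps'' the end; it then remains only to keep $f_3$ bounded along such a sequence, and this is exactly where the expansion $p=(-1)^n(2n+1)xy^{2n}+R$ is used. Concretely I would take $y_k=k$ and $x_k=(-1)^nk^{1-2n}$, so that $z_k=x_k+iy_k\to\infty$, while $f_1(z_k)=x_k\to0$, $f_2(z_k)=\frac13x_k^3-x_ky_k^2=O(k^{3-2n})\to0$, and, since $(-1)^nx_ky_k^{2n}=k=y_k$ and $|R(x_k,y_k)|=O(k^{1-4n})$,
\[
f_3(z_k)=(-1)^nx_ky_k^{2n}+\tfrac1{2n+1}R(x_k,y_k)-y_k=O(k^{1-4n})\longrightarrow 0 .
\]
Hence $f(z_k)\to(0,0,0)=f(0)$ although $z_k\to\infty$, so the preimage of a closed ball about the origin is non-compact and $f$ is not proper. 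I expect this paragraph to be the main obstacle: one must pin down the leading behavior of $f_3$ — the fact that $p$ has no $x$-free term, so that for small $x$ and large $y$ its dominant term is the \emph{linear} one $(-1)^n(2n+1)xy^{2n}$ — and then tune the sequence so that this term exactly cancels the $-y$ in $f_3$. The hypothesis $n\ge2$ enters precisely here, making $f_2(z_k)=O(k^{3-2n})\to0$ (for $n=1$ the data is not in reduced form anyway, since then $\omega_2$ and $\omega_3$ have proportional leading terms). Everything in the first three paragraphs is routine bookkeeping.
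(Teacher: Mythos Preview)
Your proof is correct and follows essentially the same route as the paper: regularity and injectivity are handled just as in the $(2,3,n)$ case, and for non-properness you use exactly the paper's curve $x=(-1)^n y^{1-2n}$ to make $f(z_k)\to 0$. The only cosmetic difference is that you extract the leading term $(-1)^n(2n+1)xy^{2n}$ of $\re(z^{2n+1})$ via the binomial expansion, whereas the paper works in polar form and expands $\cos\bigl((2n+1)\arctan(y/x)\bigr)$ using $\arctan x+\arctan(1/x)=\pi/2$; your computation is arguably the cleaner of the two.
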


%
%

The proof is similar to the proof of Proposition \ref{thm:(2,3,n)}.  Unfortunately, these surfaces are not proper, as it is possible to choose an unbounded sequence of points that will approach the origin in the image.  More specifically, the surface is the image of 
\[
f(x,y) = \left(x, \frac13x(x^2-3y^2), \frac{\left(x^2+y^2\right)^{n+\frac{1}{2}} \cos \left((2 n+1)
  \arctan\left(y/x\right)\right)}{2 n+1}-y\right).
\]

Consider the curve $f\left(\frac{(-1)^{n}}{y^{2n-1}}, y\right)$ as $y \rightarrow \infty$.  Clearly the first two coordinates of $f$ converge to 0 if $n \geq 2$.  Let us consider the third term
\begin{align*}
\cos \left((2n+1) \arctan\left(\frac{y}{x} \right) \right) & = (-1)^{n} \sin \left( (2n+1) \frac{\pi}{2} - (2n+1) \arctan \left( (-1)^{n} y^{2n} \right) \right) \\
& = (-1)^{n} \left[ (2n+1) \frac{\pi}{2} - (2n+1) \arctan\left( (-1)^{n} y^{2n} \right)  + \right.\\
& \quad \quad \quad\left. \frac{1}{6} \left( (2n+1) \frac{\pi}{2} - (2n+1) \arctan \left( (-1)^{n} y^{2n} \right) \right)^{3} - \dots\right]\\
& = (-1)^{n} \left[(-1)^{n} (2n+1) y^{-2n}  + O(y^{-4n}) \right] \\
& = (2n+1) y^{-2n} + O(y^{-4n})
\end{align*}
where we have used $\arctan x + \arctan \frac{1}{x} = \frac{\pi}{2}$.  So, the third coordinate is
\[
\frac{\left(x^2+y^2\right)^{n+\frac{1}{2}} \cos \left((2 n+1) \arctan\left(y/x\right)\right)}{2 n+1}-y  = O(y^{-4n})
\]
which shows that the curve is bounded.

\begin{figure}[h]
	\centerline{ 
		\includegraphics[width=2.5in]{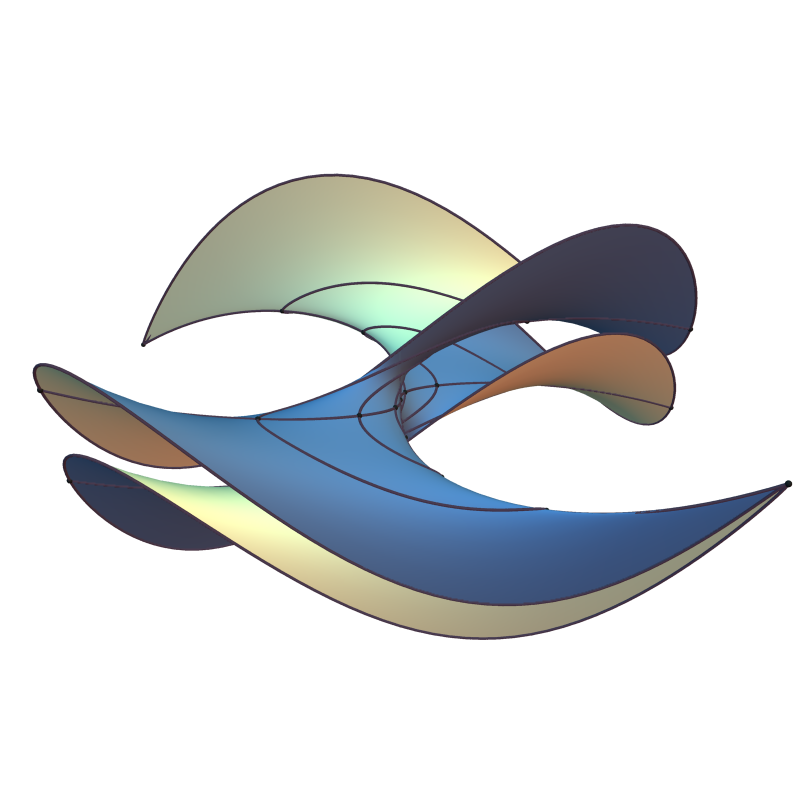}
		\includegraphics[width=2.5in]{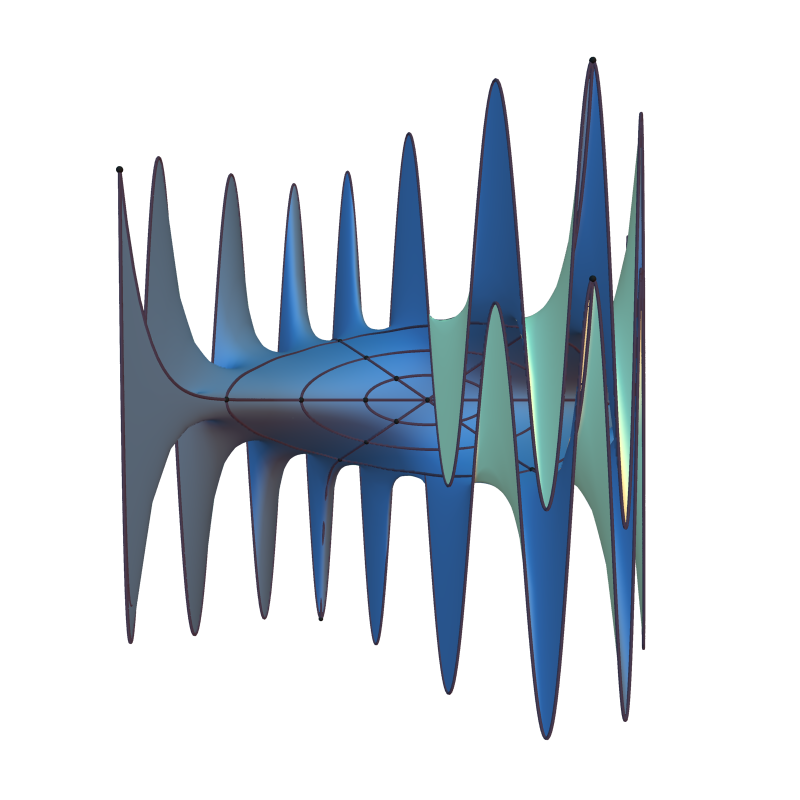}
	}
	\caption{Ends of type $(2,4,6)$ and $(2,4,20)$}
	\label{figure:24n}
\end{figure}

\subsection{An End of type \texorpdfstring{$(2,5,8)$}{(2,5,8)}}

Besides the series $(0,1,n)$, $(1,2,n)$, $(2,2,n)$, $(2,3,n)$ and $(2,3,2n)$, we have two rather complicated complete, properly embedded examples that might or might not be part of a more general series. One of them is the end of type $(2,5,8)$.

\begin{proposition}
The surface given on $\C$ by 
\begin{align*}
\omega_1&= i \, dz
\\
\omega_2&=i z^3\, dz
\\
\omega_3 &= \left(z^6+1\right)\, dz
\end{align*}
is a regular proper embedding.
\end{proposition}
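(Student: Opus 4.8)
The plan is to mimic the proof of Proposition \ref{thm:(2,3,n)}, since the 1-forms have the same shape: $\omega_1 = i\,dz$, $\omega_2 = iz^3\,dz$, $\omega_3 = (z^6+1)\,dz$. Writing $z = x+iy$ and integrating, we obtain an explicit parametrization
\[
f(x,y) = \left(-y,\ \im\!\left(\tfrac{i z^4}{4}\right),\ \re\!\left(\tfrac{z^7}{7}\right) + x\right),
\]
which I would record in terms of polar coordinates $z = \rho e^{i\theta}$ for the third coordinate, namely $\frac{\rho^7}{7}\cos(7\theta) + x$, exactly as in the $(2,3,n)$ case. First I would check regularity: the last coordinate of $f_x \times f_y$ equals $\im(\omega_1\overline{\omega_2})$ up to the usual factor, which here is $\im(i \cdot \overline{iz^3}) = \im(\overline{z^3}) = -\im(z^3)$, vanishing only when $z^3$ is real, i.e. on three lines through the origin. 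On those lines I would compute the second coordinate of $f_x\times f_y$, which is $\pm\im(\omega_1\overline{\omega_3}) = \pm\im(i\overline{(z^6+1)}) = \pm\re(z^6+1) = \pm(z^6+1)$ when $z^3$ is real (so $z^6 \ge 0$), hence nonzero. Thus $f$ is an immersion everywhere.

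Next, embeddedness. If $f(z_1) = f(z_2)$, the first coordinate gives $y_1 = y_2$. When this common value is nonzero, the second coordinate — which expands to a polynomial in $x,y$ of the form $\im(iz^4/4) = \frac14\im(z^4)(-1)\cdot$(sign bookkeeping), concretely a multiple of $x(x^2-y^2)$ or similar quartic expression — should, for fixed nonzero $y$, be monotone in $x$ and hence force $x_1 = x_2$; I would verify this by differentiating in $x$. On the line $y = 0$, the third coordinate collapses to $\frac{x^7}{7} + x$, a strictly increasing function of $x$, giving $x_1 = x_2$ there too. Hence $f$ is injective. For properness, I would take a sequence $z_k \to \infty$ and split into cases exactly as in the earlier proof: if $y$ is unbounded the first coordinate blows up; otherwise $y$ is bounded and $x \to \infty$; if $|y|$ is bounded away from $0$ the second coordinate (a quartic in $x$ with nonvanishing leading behavior once $y$ is controlled) diverges; and if $y \to 0$ the third coordinate is dominated by $\frac{x^7}{7}$, which is unbounded.

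The main obstacle I anticipate is the second-coordinate step in the injectivity and properness arguments. Because $\omega_2 = iz^3\,dz$ has a fourth-power antiderivative (rather than the quadratic $xy$-type expression appearing in Proposition \ref{thm:(2,3,n)}), the relevant function of $x$ for fixed $y$ is a cubic in $x$, which is \emph{not} globally monotone — its derivative is a quadratic that can change sign. So I cannot simply invoke monotonicity; instead I would need to combine the first and second coordinates more carefully, or bring in the third coordinate, to separate points where two distinct $(x,y)$ share the same $y$ and the same second coordinate. One clean way: having fixed $y = y_1 = y_2 \ne 0$, the equation from the second coordinate cuts the horizontal line into finitely many candidate $x$-values, and then the monotone-in-$x$ (for fixed $y$) leading behavior of the third coordinate's $\re(z^7/7) + x$ term — or a direct argument using that $\arg z$ is then determined up to the symmetry of the configuration — pins down $x$ uniquely. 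Likewise for properness, in the regime $y \to 0$, $x \to \infty$ the third coordinate's $\rho^7$ growth dominates unconditionally, and in the regime $|y|$ bounded below one checks the cubic in $x$ tends to $\infty$ along the sequence. I expect this bookkeeping to be entirely routine once set up, so as in Proposition \ref{prop:(0,2,3)} the write-up can be brief, emphasizing the explicit formula for $f$ and the three cases, and leaving the elementary verifications to the reader or the companion Mathematica notebooks.
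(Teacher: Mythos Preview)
Your overall architecture matches the paper's proof, and the regularity and properness portions are essentially correct. The genuine gap is in the injectivity step, precisely where you flag the obstacle but then wave it away.

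Concretely: with $y_1=y_2=y\ne 0$ fixed, the second coordinate $-x^3y+xy^3$ gives, after the cubic factorization you mention, either $x_1=x_2$ or the constraint $y^2=x_1^2+x_1x_2+x_2^2$. Your proposed remedy is to let the ``monotone-in-$x$ leading behavior'' of the third coordinate $h(x,y)=x+\tfrac{x^7}{7}-3x^5y^2+5x^3y^4-xy^6$ separate the remaining candidates. But $\partial_x h = 1+\re(z^6)$, which is \emph{not} nonnegative for general $(x,y)$, so $h(\cdot,y)$ is not monotone; and the spurious candidates $x_2$ determined by $x_2^2+x_1x_2+x_1^2=y^2$ are bounded in terms of $y$, so asymptotic dominance of $x^7$ is irrelevant here. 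The alternative you float (``$\arg z$ is determined up to symmetry'') is not an argument either: the configuration has no obvious symmetry exchanging $x_1$ and $x_2$ that would force $h$-values to differ.

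What the paper actually does at this point is nontrivial algebra: it writes $h(x_1,y)-h(x_2,y)=\tfrac17(x_1-x_2)\bigl(7+k(x_1,x_2,y)\bigr)$, observes that on the constraint surface $k$ becomes homogeneous of degree $6$ in $(x_1,x_2)$, substitutes $x_2=mx_1$, and reduces positivity of the resulting degree-$6$ polynomial in $m$ to checking that $m^4+5m^3+9m^2+5m+1=m^2\bigl((m+\tfrac1m)^2+5(m+\tfrac1m)+7\bigr)>0$. This is elementary once written down, but it is the entire content of the embeddedness proof and is not ``bookkeeping'' one can delegate to the reader. You should carry out this factorization (or an equivalent positivity argument) explicitly.
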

\begin{proof}
To check this, we compute
\[
f(x,y)=\left(-y,-x^3y+xy^3,x+\frac{x^7}{7}-3x^5y^2+5x^3y^4-xy^6\right)
\]

Now, let us assume that $f(x_1, y_1) = f(x_2,y_2)$.  By the first coordinate, we have $y_1 = y_2$, so we may omit the subscripts on $y$.  Then, from the second coordinate,
\[
(x_2-x_1)y(x_1^2+x_1x_2+x_2^2-y^2)=0
\]
and so $x_1=x_2$, $y=0$, or $y^2=x_1^2+x_1x_2+x_2^2$.  If $y=0$ then, from the third coordinate reduces to $\frac{1}{7}x^7 + x$ which is strictly increasing, meaning $x_1 = x_2$.  So let us consider the last case.  Note that $y^2$ is a bivariate homogeneous polynomial of degree 2.

Let $h(x,y) = \frac{x^7}{7} - 3x^5 y^2 + 5x^3y^4- xy^6+x$.  Then
\begin{align*} 
h(x_1,y) - h(x_2,y) & =  \frac{x_1^7}{7} - \frac{x_2^7}{7} - 3x_1^5 y^2+ 3x_2^5y^2 + 5x_1^3y^4 - 5x_2^3y^4- x_1y^6+x_2y^6+x_1-x_2\\
& = \frac{1}{7} (x_1-x_2)\left(7+\sum_{i=0}^{6} x_{1}^{6-i}x_2^{i} -21y^2 \sum_{i=0}^{4}x_{1}^{4-i}x_{2}^{i}+35y^4 \sum_{i=0}^{2} x_{1}^{2-i}x_{2}^{i}-7y^6\right).
\end{align*}
Let us examine the last term in the factorization:
\[
k(x_1,x_2,y) = \sum_{i=0}^{6} x_{1}^{6-i}x_2^{i} -21y^2 \sum_{i=0}^{4}x_{1}^{4-i}x_{2}^{i}+35y^4 \sum_{i=0}^{2} x_{1}^{2-i}x_{2}^{i}-7y^6.
\]
By the observation on $y^2$, this is a homogeneous polynomial of degree 6.  Without loss of generality, let $x_2 = mx_1$.  Then $y^2 = (1+m+m^2)x_1^2$ and
\begin{align*} 
k(x_1,mx_1,y) & = \left(\sum_{i=0}^{6} m^i -21 (1+m+m^2) \sum_{i=0}^{4} m^{i} +35 (1+m+m^2)^2 \sum_{i=0}^{2} m^{i} - 7(1+m+m^2)^3 \right) x_{1}^{6} \\
& = \begin{cases} \left( \frac{1-m^7}{1-m} -21 \frac{1-m^3}{1-m} \frac{1-m^5}{1-m} +28 \left(\frac{1-m^3}{1-m}\right)^3 \right) x_1^6 & \text{if $m\neq 1$}\\
448x_1^6 & \text{if $m=1$} \end{cases}
\end{align*}
In either case, the coefficient of $x_1^6$ is positive.  To see this for $m\neq 1$, we first expand part of the coefficient
\[
\frac{1-m^7}{1-m} + 7\frac{1-m^3}{1-m} (m^4+5m^3+9m^2+5m+1).
\]
Clearly, the first term is positive, so we just need to check that the second term is also positive.  Clearly, this holds form $m=0$, so let us assume that $m\neq0$.  Let $z = m + \frac{1}{m}$ and write
\[
m^4+5m^3+9m^2+5m+1 = m^{2} \left( m^2 + 5m+ 9 + \frac{5}{m} + \frac{1}{m^2} \right) = m^2 \left( z^2 + 5z+7\right).
\]
Both terms are positive and so we are done.

Hence, $k(x_1,x_2,y)\geq 0$ for $y^2 = x_1^2 + x_1 x_2+x_2^2$. Therefore from 
\[
h(x_1,y)-h(x_2,y) = \frac{1}{7} (x_1 - x_2) (7 + k(x_1,x_2,y)) = 0,
\]
we have $x_1 = x_2$ and $f$ is one-to-one.

The last coordinate of $f_x\times f_y$ is $\im(\omega_1\overline{\omega_2})=-y(3x^2-y^2)$.  Thus, $f$ has a singularity only when $y=0$ or $y=\pm\sqrt{3x^2}$.

If $y=0$ then the second coordinate of $f_x\times f_y$ is $-\im(\omega_1\overline{\omega_3})=-(1+x^2)(1-x^2+x^4)$, which is never equal to $0$.  

If $y=\pm\sqrt{3x^2}$ then the second coordinate of $f_x\times f_y$ is $-(1+4x^2)(1-4x^2+16x^4)$, which is also never equal to $0$.  Thus, $f$ is regular.

As for $f$ being a proper map, since the first coordinate of $f$ is $-y$, we just need to consider what happens when $y$ is bounded and $x\rightarrow\infty$.  In this case, the third coordinate of $f$ blows up, and so $f$ is proper.
\end{proof}
\begin{figure}[h]
	\centerline{ 
		\includegraphics[width=2.5in]{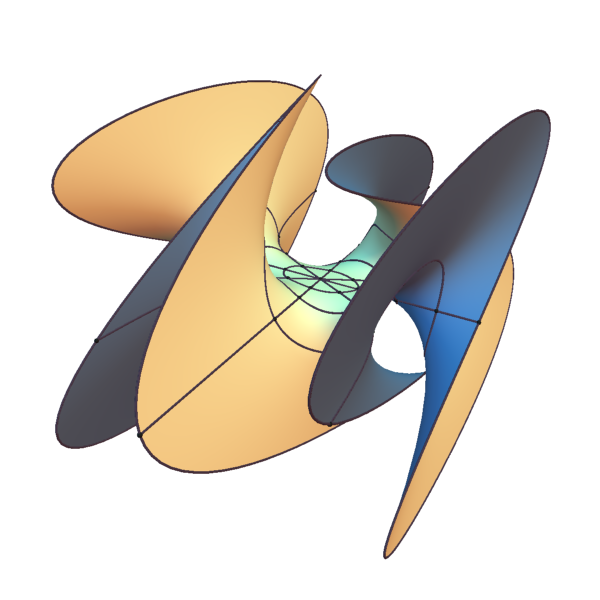}
		\includegraphics[width=2.5in]{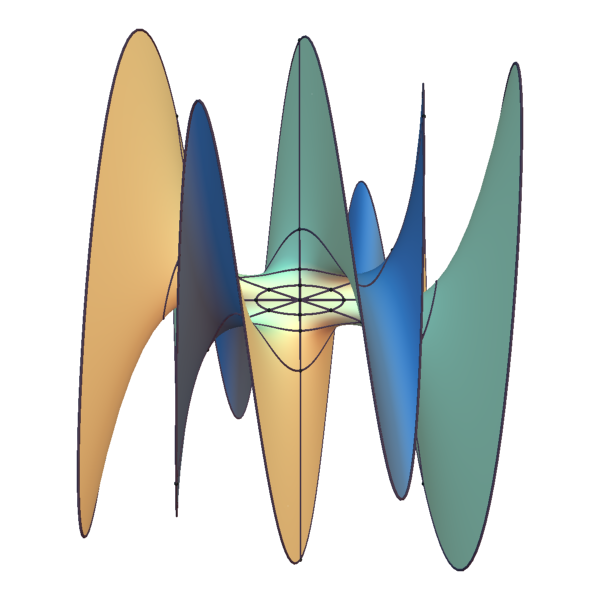}
	}
	\caption{Two views of an end of type $(2,5,8)$}
	\label{figure:258}
\end{figure}

\subsection{An embedded end of type \texorpdfstring{$(3,4,6)$}{(3,4,6)}}

The other complicated embedded end we have found is  of type $(3,4,6)$, 
given by
\begin{example}\label{ex:346}

\begin{align*}
\omega_1&= i  z\, dz
\\
\omega_2&= \left( z^2+a \right) \, dz
\\
\omega_3 &= \left(  i z^4+i\right)\, dz
\end{align*}

and shown for $a=\frac12$ in two views in Figure \ref{figure:346}.
\begin{figure}[h]
	\centerline{ 
		\includegraphics[width=2.5in]{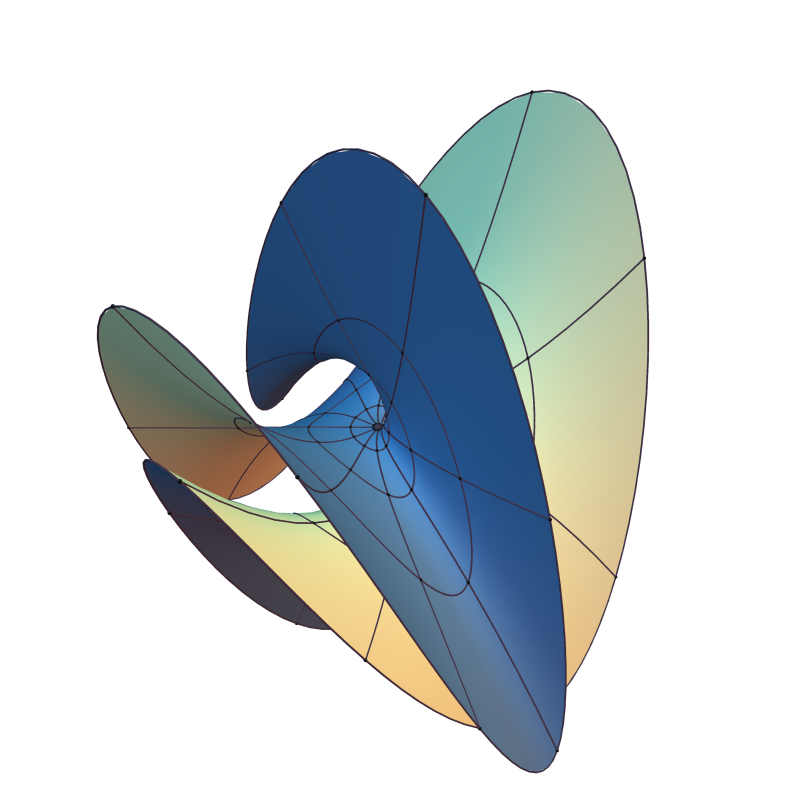}
		\includegraphics[width=2.5in]{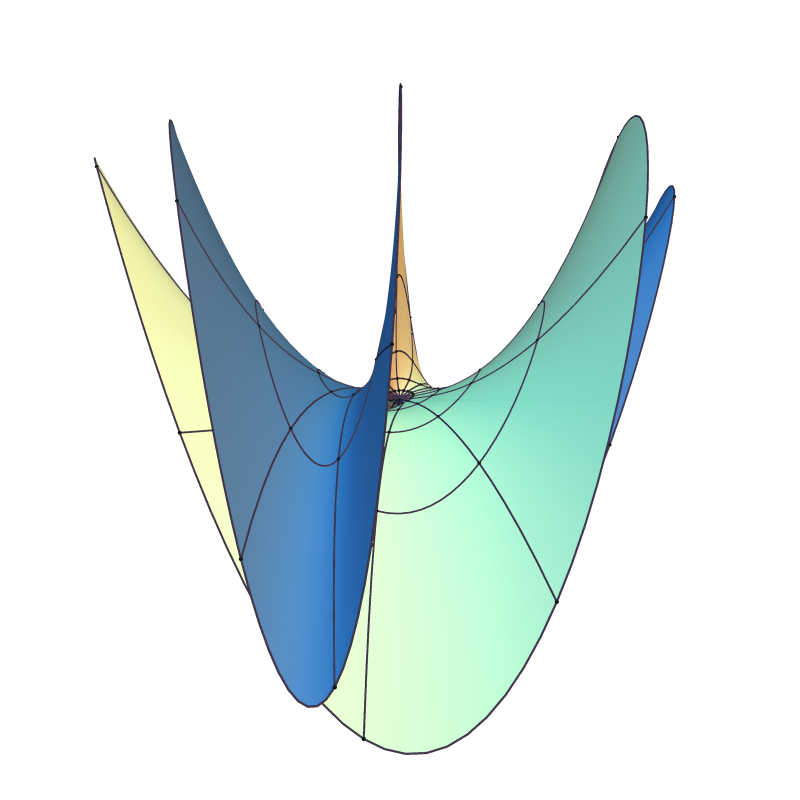}
	}
	\caption{Two views of an end of type $(3,4,6)$}
	\label{figure:346}
\end{figure}

\end{example}

\begin{proposition}
The surface given in Example \ref{ex:346}  is complete and properly embedded for $a=\frac12$.
\end{proposition}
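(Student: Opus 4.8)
The plan is to follow the template used for the $(2,5,8)$ end: write $f$ out in real coordinates, settle regularity and properness by elementary estimates, and reduce injectivity to a polynomial positivity statement. Integrating the data with $a=\tfrac12$ gives
\[
f(x,y)=\left(-xy,\ \tfrac13(x^3-3xy^2)+\tfrac12 x,\ -x^4y+2x^2y^3-\tfrac15 y^5-y\right),
\]
or, in polar coordinates $z=re^{it}$,
\[
f=\left(-\tfrac12 r^2\sin 2t,\ \tfrac13 r^3\cos 3t+\tfrac12 r\cos t,\ -\tfrac15 r^5\sin 5t-r\sin t\right).
\]

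For regularity I would use, as in the earlier proofs, that the $k$-th coordinate of $f_x\times f_y$ equals $\im(\phi_i\overline{\phi_j})$ for $(i,j,k)$ cyclic, where $\omega_\ell=\phi_\ell\,dz$; here $\phi_1=iz$, $\phi_2=z^2+\tfrac12$, $\phi_3=i(z^4+1)$. The third coordinate works out to $x(x^2+y^2+\tfrac12)$, which vanishes only on $\{x=0\}$; on that line the second coordinate reduces to $-y(y^4+1)$, vanishing only at the origin, where a direct check gives $f_x\times f_y=(-\tfrac12,0,0)\neq 0$. For properness I would note that $\cos 3t$ and $\sin 5t$ have no common zero --- a common zero forces $6m-10k=5$ for integers $k,m$, impossible by parity --- so along any sequence $(x_n,y_n)\to\infty$, after passing to a subsequence with $t_n\to t_*$, one of the leading terms $\tfrac13 r_n^3\cos 3t_n$ or $-\tfrac15 r_n^5\sin 5t_n$ dominates its coordinate and is unbounded. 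Completeness is then automatic: $f$ is a regular proper map of $\C$ into $\R^3$, so any divergent curve in $\C$ has image leaving every ball of $\R^3$, hence of infinite length in the pulled-back metric.

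The substance of the proof is injectivity. Suppose $f(x_1,y_1)=f(x_2,y_2)$; the first coordinate gives $x_1y_1=x_2y_2=:c$. If any of $x_1,x_2,y_1,y_2$ vanishes then $c=0$, and a short case check --- using that $y\mapsto -y(\tfrac15 y^4+1)$ (the third coordinate on $\{x=0\}$) and $x\mapsto \tfrac13 x(x^2+\tfrac32)$ (the second coordinate on $\{y=0\}$) are strictly monotone --- forces $(x_1,y_1)=(x_2,y_2)$. Otherwise all four entries are nonzero; set $y_i=c/x_i$, $u=x_1$, $v=x_2$, $s=uv$, $q=u^2+uv+v^2$. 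Subtracting the two second-coordinate equations and dividing by $u-v$ yields either $u=v$ (hence $y_1=y_2$ and we are done) or $\tfrac13 q+\tfrac{c^2}{s}+\tfrac12=0$, which forces $s<0$ and $c^2=-\tfrac13 s(q+\tfrac32)$. Doing the same with the third-coordinate equations, using $u^4+u^3v+u^2v^2+uv^3+v^4=q^2-qs-s^2$ and substituting for $c^2$, I expect to obtain, after clearing denominators,
\[
\left(q+\tfrac32\right)^2(q^2-qs-s^2)-15 s^3(q-3)+45 s^2=0 .
\]
Since $u,v$ are real with product $s$, the pair $(q,s)$ satisfies $s<0$ and $q\ge -s$; on that region $q^2-qs-s^2\ge -qs>0$, and $-15 s^3(q-3)+45 s^2$ is positive (a nonnegative multiple of $45 s^2$ when $q\ge 3$, and $\ge 15 s^2(s^2+3s+3)>0$ when $q<3$, since $s^2+3s+3$ has negative discriminant). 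This contradicts the displayed equation, so $u=v$ after all and $f$ is one-to-one; being a proper injective immersion, $f$ is a proper embedding.

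I expect the injectivity reduction to be the only real obstacle: arranging the second- and third-coordinate relations so that the symmetric functions $q$ and $s$ appear cleanly, and then recognizing the resulting expression as a sum of manifestly positive terms on the admissible region $\{s<0,\ q\ge -s\}$. The hypothesis $a=\tfrac12$ enters exactly at this last point --- it is what makes $s^2+6as+3=s^2+3s+3$ definite --- so a genuinely different argument would be needed for large $a$; before trusting the polynomial identity above I would verify the constant ``$45$'' and all exponents by a direct symbolic expansion.
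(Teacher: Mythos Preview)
Your argument is correct, and the hedged polynomial identity you ``expect to obtain'' is in fact exactly right: with $q=u^2+uv+v^2$, $s=uv$, $P=u^4+u^3v+u^2v^2+uv^3+v^4=q^2-qs-s^2$, the third-coordinate relation after substituting $c^2=-\tfrac{s}{3}(q+\tfrac32)$ and clearing denominators is
\[
(q+\tfrac32)^2(q^2-qs-s^2)-15s^3(q-3)+45s^2=0,
\]
and your positivity estimate on the admissible region $\{s<0,\ q\ge -s\}$ goes through (in the case $q<3$ one has $-15s^3(q-3)+45s^2=15s^2(-sq+3s+3)\ge 15s^2(s^2+3s+3)>0$, using $-sq\ge s^2$ from $q\ge -s$ and $s<0$). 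The regularity and properness arguments are fine as well; the parity obstruction $6m-10k=5$ is a clean way to see that $\cos 3t$ and $\sin 5t$ never vanish together.

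The paper's own proof follows the same overall template---use the first coordinate to relate $y_1,y_2$ to $x_1,x_2$, then reduce the remaining two equations---but takes a different route at the crucial step. It works in the rescaled coordinates $(xy,\,x(3a+x^2-3y^2),\,y(5x^4-10x^2y^2+y^4+5))$, handles the degenerate cases $x_1=x_2$ and $x_2=0$ separately, then in the generic case eliminates $y_2$ and $y_1$ to obtain a single polynomial relation in $x_1,x_2$ and \emph{appeals to a computer algebra system} to verify that its only real solution is $x_1=x_2=0$. Your use of the symmetric functions $q,s$ replaces that CAS step with an explicit, human-checkable positivity argument, and also makes transparent where $a=\tfrac12$ enters: it is exactly what gives $s^2+6as+3=s^2+3s+3$ negative discriminant. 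The trade-off is that the paper's formulation keeps the parameter $a$ visible throughout and notes that the CAS verification in fact succeeds for a range of $a$ including $\tfrac12$, whereas your argument is tailored to that value.
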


\begin{proof}
The proof is somewhat tedious but rather typical for harmonic surfaces. We first evaluate with $z=x+i y$

\begin{align*}
f(z) &= \int^z (-\omega_1, 3\omega_2, -5\omega_3)\, dz
\\
& = \left(x y,x \left(3 a+x^2-3 y^2\right),y \left(5 x^4-10 x^2
   y^2+y^4+5\right)\right)  
 \end{align*}   
 where we have rescaled the coordinates to avoid irrelevant coefficients.  
 
 For embeddedness we have to show that $f(x_1,y_1)  = f(x_2,y_2)$ implies that $x_1=x_2$ and $y_1=y_2$.
 Let's first assume that we have $x_1=x_2$ but $y_1\ne y_2$. Then
the first coordinate of 
$ f(x,y_1)-f(x,y_2)$ become  $x(y_1-y_2)$ so that $x=0$. This makes the second coordinate of $f(0,y_1)-f(0,y_2)$ equal to 0 as well, but the third becomes
   \[
   \frac{y_1^4}{2}+\frac{y_2^4}{2}+\frac{1}{4}
   \left(y_1^2-y_2^2\right)^2+\frac{1}{4}
   (y_1+y_2)^4+5\ge5>0\ .
   \]
   
   From now on we assume that $x_1\ne x_2$. 
   The next special case is $x_2=0$. Then the first coordinate of $f(x_1,y_1)  = f(0,y_2)$ is $x_1y_1$. As $x_1\ne0$ this implies $y_1=0$, and we get
   \[
   f(x_1,0)-f(0,y_2) = \left(0,x_1(x_1^2+3a), -y_2(y_2^2+5)\right)
   \]
   which is never 0 for positive $a$. 

   Now we turn to the general case, assuming that $x_1\ne x_2$ and $x_1,x_2 \ne 0$. The first coordinate of $f(x_1,y_1)  = f(x_2,y_2)$ is $x_1y_1-x_2y_2$. As $x_2\ne0$, 
   we can solve $y_2= x_1 y_1/x_2$. If $y_1=0$ we get $y_2=0$, and 
   \[
   f(x_1,0)-f(x_2,0)=\left(0,(x_1-x_2)(x_1^2+x_1x_2+x_2^2+3a),0\right)
   \]
   which is never 0 for positive $a$. Hence we can assume that $y_1,y_2\ne0$. Eliminating $y_2$ from $f(x_1,y_1)-f(x_2,y_2)=0$ leaves us with two equations
   
   \[
   a x_2+x_1^2 x_2+x_1 x_2^2+3
   x_1 y_1^2+x_2^3=0
   \]
   and
   
  \[ x_1^4 y_1^4-5 x_1^3 x_2^5+x_1^3
   x_2 y_1^4-5 x_1^2 x_2^6-10
   x_1^2 x_2^4 y_1^2+x_1^2 x_2^2
   y_1^4-5 x_1 x_2^7+x_1 x_2^3
   y_1^4+x_2^4 y_1^4+5 x_2^4=0
   \]
   
   As the first equation contains only $y_1^2$ and the second only even powers of $y_1$, we can eliminate $y_1$ to obtain a single equation  between $x_1$ and $x_2$.
   Using a computer algebra system, it is possible to verify that its only real solution is given by $x_1=x_2=0$ for a range of parameters $a$ that includes $a=\frac12$.
   This shows embeddedness. To show that the surface is regular, look at $f_x\times f_y=0$. Its third coordinate is equal to $-3x(a+x^2+y^2)$, so that $x=0$. In this case
   \[
   f_x\times f_y = \left(15 \left(y^4+1\right) \left(a-y^2\right),-5 y
   \left(y^4+1\right),0\right)
   \]
   which is never 0.
   
   Finally we verify properness. Let $(x,y)\to\infty$, hence $x\to \infty$ or $y\to\infty$. By the first coordinate $x y$ of $f(x,y)$,  $f(x,y)$ can only remain bounded if with $x\to\infty$ we have $y\to 0$, and with $y\to\infty$ we have $x\to 0$. 
   
   Consider first the case that $x\to\infty$ and thus $y\to 0$. Then the second coordinate $x(x^2-3y^2+3a)\to \infty$, and $f$ is proper.
   In case that $y\to\infty$ and thus $x\to 0$, the third coordinate $y(y^4-10x^2y^2+5x^4+5)\to \infty$, and we are proper as well.

   
   \end{proof}

\subsection{Ends of type \texorpdfstring{$(m,n,m+n-1)$}{(m,n,m+n-1}}

The (3,4,6) end actually belongs to a family of immersed ends. The surfaces below have very simple parametrizations in the punctured plane. The type at $0$ is $(m,n,m+n-1)$, while the type at $\infty$ is 
$(2-m,2-n,3-m-n)$ which in general creates a singular surface at $\infty$. 
Due to the twisted appearance of the end it appears to be very hard to incorporate them into complete surfaces.
Moreover, while all numerical evidence points towards it, we do not have an embeddedness proof except for special cases.

\begin{conjecture}
Let $m$, $n$ and $k$  be positive integers such that $(m-1)/k$, $(n-1)/k$, and $(m+n-2)/k$ are relatively prime. 
Assume that $(m-1)/k$ and $(n-1)/k$ have different parity. Then, for sufficiently small disks around $0$, the end
defined by
\begin{align*}
\omega_1&= \frac{i}{z^m}\, dz
\\
\omega_2&=\frac{i}{z^n} \, dz
\\
\omega_3 &= \frac{1}{z^{m+n-1}}\, dz
\end{align*}
is a complete, properly embedded end of type $(m,n,m+n-1)$.
\end{conjecture}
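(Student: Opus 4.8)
The plan is to integrate the forms explicitly, dispose of completeness, properness and regularity by short estimates that already use the parity and coprimality hypotheses, and then isolate injectivity of the parametrization as the one genuinely hard point.

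\emph{Normalization.} Put $M=m-1$ and $N=n-1$, so the third exponent is $M+N+1=m+n-1$. Since $\omega_1$ must have real residue we need $m\ge2$, hence $M\ge1$, and after swapping $\omega_1$ and $\omega_2$ if necessary (an orthogonal coordinate change) we may assume $M\le N$. Because $f$ is invariant under $z\mapsto e^{2\pi i/k}z$ it factors as $f=\tilde f\circ(z\mapsto z^{k})$, where $\tilde f$ is the end with exponents $(a+1,\,b+1,\,a+b+1)$, $a=(m-1)/k$, $b=(n-1)/k$; since $\gcd(a,b)=1$, $a$ and $b$ have opposite parity, and $f,\tilde f$ have the same image (for $k>1$ the parametrization $f$ itself is $k$-to-one, so here embeddedness refers to the image), it suffices to prove the statement for $k=1$, i.e.\ $\gcd(M,N)=1$ with $M,N$ of opposite parity. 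Integrating in polar coordinates $z=re^{i\theta}$ and composing with the harmless diagonal affine map that clears the constants gives
\[
f(r,\theta)=\bigl(\,r^{-M}\sin(M\theta),\ r^{-N}\sin(N\theta),\ r^{-(M+N)}\cos((M+N)\theta)\,\bigr).
\]

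\emph{Completeness and properness.} A short computation with $f_x,f_y$ shows both principal stretches of the pullback metric blow up (at rate $\gtrsim r^{-m}$) as $r\to0$, so the end is complete. For properness it is enough that $|f(r,\theta)|\to\infty$ as $r\to0$ uniformly in $\theta$; since $r^{-M}\le r^{-N}\le r^{-(M+N)}$ for $r\le1$, this follows once $g(\theta):=|\sin M\theta|+|\sin N\theta|+|\cos(M+N)\theta|$ has a positive lower bound. If $g(\theta_0)=0$ then $\sin M\theta_0=\sin N\theta_0=0$; picking $u,v\in\Z$ with $uM+vN=1$ gives $\theta_0=u(M\theta_0)+v(N\theta_0)\in\pi\Z$, whence $\cos((M+N)\theta_0)=\pm1\ne0$, a contradiction. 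In particular, at every $\theta$ at least one of $\sin M\theta$, $\sin N\theta$, $\cos(M+N)\theta$ is nonzero; we use this below.

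\emph{Regularity.} With $\phi_1=iz^{-m}$, $\phi_2=iz^{-n}$, $\phi_3=z^{-(m+n-1)}$ the three components of $f_x\times f_y$ are $\im(\phi_2\overline{\phi_3})$, $\im(\phi_3\overline{\phi_1})$, $\im(\phi_1\overline{\phi_2})$, and these evaluate to positive powers of $1/r$ times $\cos(M\theta)$, $-\cos(N\theta)$, and $\sin((n-m)\theta)$ respectively. Hence $f_x\times f_y=0$ forces $\cos M\theta=\cos N\theta=0$, i.e.\ $(2j+1)N=(2l+1)M$ for integers $j,l$; since $\gcd(M,N)=1$ this gives $M\mid 2j+1$ and $N\mid 2l+1$, which is impossible because one of $M,N$ is even. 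So $f$ is an immersion everywhere on the punctured disk.

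\emph{Injectivity --- the main obstacle.} Suppose $f(r_1,\theta_1)=f(r_2,\theta_2)$ and put $\lambda=r_1/r_2>0$; then
\[
\sin M\theta_1=\lambda^{-M}\sin M\theta_2,\qquad \sin N\theta_1=\lambda^{-N}\sin N\theta_2,\qquad \cos(M+N)\theta_1=\lambda^{-(M+N)}\cos(M+N)\theta_2 .
\]
Eliminating $\lambda$ (multiply the first two and divide by the third, or take suitable powers of pairs) produces relations in $\theta_1,\theta_2$ alone, such as $\dfrac{\sin M\theta_1\sin N\theta_1}{\cos(M+N)\theta_1}=\dfrac{\sin M\theta_2\sin N\theta_2}{\cos(M+N)\theta_2}$ and $\dfrac{\sin^{N} M\theta_1}{\sin^{M} N\theta_1}=\dfrac{\sin^{N} M\theta_2}{\sin^{M} N\theta_2}$; once these are shown to force $\theta_1\equiv\theta_2\pmod{2\pi}$, the properness estimate gives $\lambda=1$ and hence $z_1=z_2$. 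Clearing denominators and substituting $t_j=\tan(\theta_j/2)$ turns this into a polynomial system in two real variables whose only real solution should be the diagonal, and the hypotheses are exactly what remove the competing solutions: coprimality kills the translation symmetry $\theta\mapsto\theta+2\pi/k$, while the opposite parity of $M$ and $N$ kills the sign-flip symmetries $\theta\mapsto\theta+\pi$ and $\theta\mapsto-\theta$, by the same mechanism as in the regularity step. For any fixed $(m,n)$ the resulting system is dispatched by a resultant computation, exactly as for the $(3,4,6)$ and $(2,5,8)$ ends above. The difficulty --- and the reason the statement is only a conjecture --- is that I see no way to make this elimination \emph{uniform} in $m$ and $n$; one presumably needs an inductive argument exploiting the recursion that expresses $\cos((M+N)\theta)$ through $\cos M\theta,\sin M\theta,\cos N\theta,\sin N\theta$, and producing such an argument is the heart of the problem.
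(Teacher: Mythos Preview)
The statement you are attempting to prove is labeled a \textbf{Conjecture} in the paper, and the paper offers no proof; immediately before stating it the authors write ``while all numerical evidence points towards it, we do not have an embeddedness proof except for special cases.'' So there is no paper proof to compare against.

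Your partial analysis is sound and matches the paper's own assessment. The explicit integration, the completeness and properness estimates, and the regularity argument are all correct, and they make essential use of both hypotheses: coprimality of $M=m-1$ and $N=n-1$ (via B\'ezout) to show $g(\theta)=|\sin M\theta|+|\sin N\theta|+|\cos(M{+}N)\theta|$ is bounded away from zero, and opposite parity to rule out simultaneous vanishing of $\cos M\theta$ and $\cos N\theta$. You then correctly isolate injectivity as the genuine obstacle and explicitly acknowledge that you cannot close it uniformly in $m,n$; that is precisely where the paper leaves it as well.

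One point in your normalization step deserves emphasis rather than a parenthetical: you observe that when $k>1$ the map $f$ factors through $z\mapsto z^{k}$ and is therefore $k$-to-one, hence not an embedding as a \emph{map}. You then reinterpret ``embedded end'' as a statement about the image. This is a genuine ambiguity in the conjecture as written. If the authors intend $f$ itself to be injective, the conjecture is already false for every $k>1$ by your argument; if they mean that the image surface is embedded, then your reduction to the $k=1$ case (coprime $M,N$ of opposite parity) is exactly the right move, and the substantive open problem is the injectivity of the reduced map $\tilde f$. Either way, your write-up frames the remaining difficulty cleanly.
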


\begin{figure}[h]
	\centerline{ 
		\includegraphics[width=2.5in]{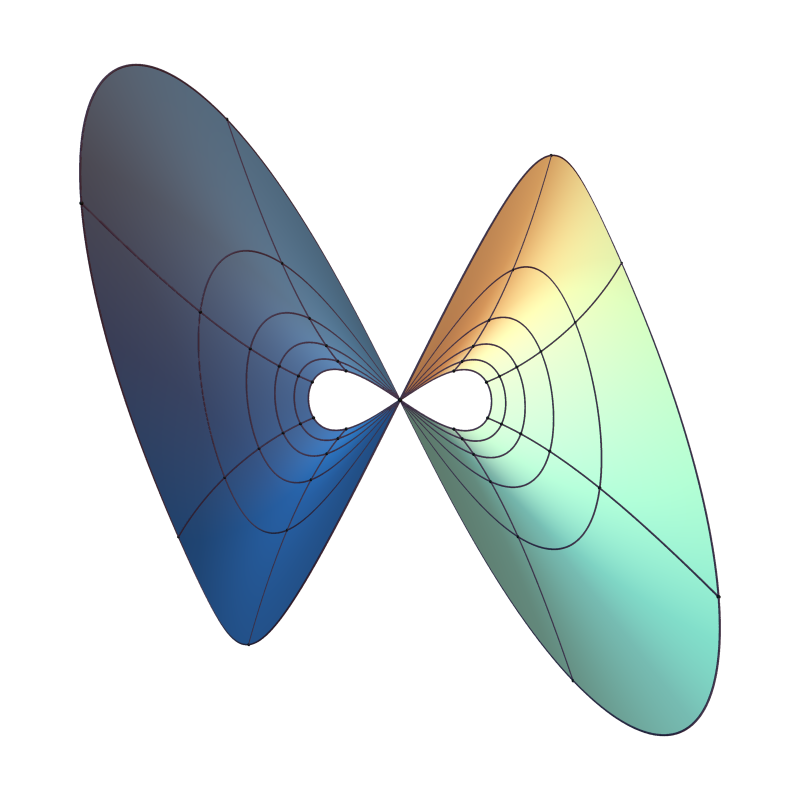}
		\includegraphics[width=2.5in]{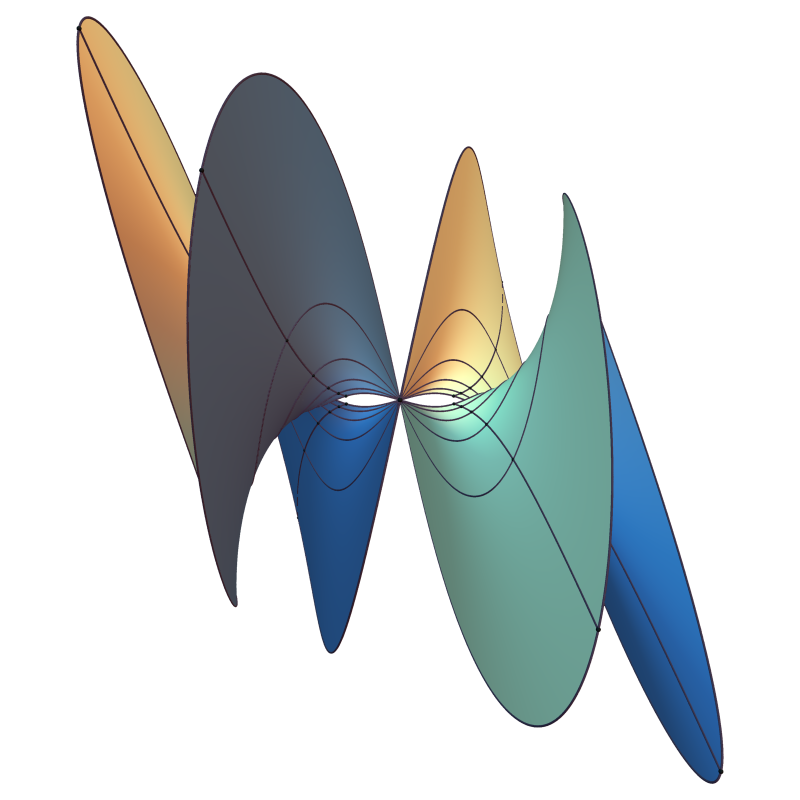}
	}
	\caption{Embedded ends of type $(2,3,4)$ and $(2,5,6)$}
	\label{figure:mn-1}
\end{figure}

\begin{conjecture}
Let $m$, $n$ and $k$  be positive integers such that $(m-1)/k$, $(n-1)/k$, and $(m+n-2)/k$ are relatively prime. 
Assume that $(m-1)/k$ and $(n-1)/k$ have the same parity. Then, for sufficiently small disks around $0$, the end
defined by
\begin{align*}
\omega_1&= \frac{i}{z^m}\, dz
\\
\omega_2&=\frac{1}{z^n} \, dz
\\
\omega_3 &= \frac{i}{z^{m+n-1}}\, dz
\end{align*}
is a complete, properly embedded end of type $(m,n,m+n-1)$.
\end{conjecture}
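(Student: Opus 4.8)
\medskip

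\noindent\textit{Proof proposal.}
Since the hypotheses force $k=\gcd(m-1,n-1)$ with $(m-1)/k$ and $(n-1)/k$ coprime and both odd, and since $w=z^{k}$ carries the three forms to forms of the same shape with exponents $(m-1)/k+1$, $(n-1)/k+1$, $(m-1)/k+(n-1)/k+1$ --- the given map being a $k$-fold cover of its image --- it suffices to treat $k=1$. So write $a=m-1\ge1$, $b=n-1\ge1$, both odd, $\gcd(a,b)=1$. Integrating in polar coordinates $z=\rho e^{i\theta}$ and applying a diagonal rescaling of $\R^{3}$, the end is parametrised by
\[
f(\rho,\theta)=\bigl(\rho^{-a}\sin(a\theta),\ \rho^{-b}\cos(b\theta),\ \rho^{-(a+b)}\sin((a+b)\theta)\bigr).
\]

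Completeness is clear: the conformal factor of the induced metric is $\rho^{-2m}+\rho^{-2n}+\rho^{-2(m+n-1)}\ge\rho^{-2}$ near $0$, so paths to the puncture have infinite length. For \emph{regularity}, one computes that the three components of $f_x\times f_y$ are nonzero multiples of $\rho^{-A}\cos(a\theta)$, $\rho^{-B}\sin(b\theta)$, $\rho^{-C}\cos((b-a)\theta)$ respectively, with $A,B,C>0$; these vanish simultaneously only if $\cos(a\theta)=0$ and $\sin(b\theta)=0$, i.e.\ $a/b=(2\ell+1)/(2j)$ for integers $j,\ell$, which is impossible because in lowest terms the left side is odd/odd while the right side is odd/even. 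This is exactly where both $m-1$, $n-1$ odd enters. For \emph{properness}, if $z_\nu\to0$ with $f(z_\nu)$ bounded then $\sin(a\theta_\nu)\to0$ and $\cos(b\theta_\nu)\to0$, hence $|\cos(a\theta_\nu)|\to1$ and $|\sin(b\theta_\nu)|\to1$, so $|\sin((a+b)\theta_\nu)|=|\sin(a\theta_\nu)\cos(b\theta_\nu)+\cos(a\theta_\nu)\sin(b\theta_\nu)|\to1$ and the third coordinate blows up --- a contradiction.

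For \emph{embeddedness} suppose $f(\rho_1,\theta_1)=f(\rho_2,\theta_2)$ and set $A=f_1$, $B=f_2$, $D=f_3-f_1f_2$; expanding $\sin((a+b)\theta)$ gives $D=\rho^{-(a+b)}\cos(a\theta)\sin(b\theta)$, so with $u=\rho^{-a}$, $v=\rho^{-b}$ one has $u^{2}-A^{2}=(u\cos a\theta)^{2}$, $v^{2}-B^{2}=(v\sin b\theta)^{2}$, hence the single relation $(u^{2}-A^{2})(v^{2}-B^{2})=D^{2}$. Along the curve $v=u^{b/a}$, writing $s=\rho^{-1}$ this becomes $(s^{2a}-A^{2})(s^{2b}-B^{2})=D^{2}$, and the left side is strictly increasing from $0$ to $\infty$ on $\{s:s^{2a}\ge A^{2},\ s^{2b}\ge B^{2}\}$, which contains the actual $s$; so $\rho^{-1}$ is determined by $(A,B,D)$ and $\rho_1=\rho_2$. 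With $\rho$ fixed, $\sin(a\theta_i)$ and $\cos(b\theta_i)$ agree, and (using the third coordinate again) $\cos(a\theta_i)\sin(b\theta_i)$ agree; since $\cos^{2}(a\theta_i)$ and $\sin^{2}(b\theta_i)$ also agree, the signs of $\cos(a\theta_i)$ and $\sin(b\theta_i)$ either both match --- then $e^{ia\theta_1}=e^{ia\theta_2}$, $e^{ib\theta_1}=e^{ib\theta_2}$, and $\theta_1\equiv\theta_2\pmod{2\pi}$ by $\gcd(a,b)=1$ --- or both flip, forcing $a(\theta_1+\theta_2)\equiv\pi$ and $b(\theta_1+\theta_2)\equiv0\pmod{2\pi}$; then $\theta_1+\theta_2=2\pi p/b$ makes $2ap/b$ an odd integer, whence $b\mid2p$, and as $b$ is odd $b\mid p$, so $2ap/b$ is even, a contradiction. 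The degenerate sub-cases $\cos(a\theta_i)=0$ or $\sin(b\theta_i)=0$ are handled the same way, invoking the third coordinate once more and again using that $b$ is odd.

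The routine parts are completeness, regularity, and properness, and the monotonicity argument pins down $\rho$ with no difficulty; the \textbf{main obstacle}, and the reason this is only a conjecture, is the reconstruction of $\theta$. One must be certain that the finitely many sign branches of $(\cos a\theta,\sin b\theta)$, and the boundary cases where one of these vanishes, are \emph{all} killed by the parity hypothesis, and that no further branch is created by higher trigonometric relations among $\sin(a\theta)$, $\cos(b\theta)$, $\sin((a+b)\theta)$. A fully rigorous treatment would organise this as a finite case analysis on $a\theta$, $b\theta$, $(a+b)\theta$ modulo $2\pi$; I expect the argument above to go through, but have not verified every sub-case.
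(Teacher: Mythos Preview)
The paper does not prove this statement: it is explicitly labeled a \emph{conjecture}, and the authors say ``while all numerical evidence points towards it, we do not have an embeddedness proof except for special cases.'' So there is no paper-proof to compare against; you are attempting something the authors did not do.

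Your reduction to $k=1$ is the real problem, and it is not a minor gap. You correctly observe that under $w=z^{k}$ the three forms become forms of the same shape with exponents $(m-1)/k+1$, $(n-1)/k+1$, $(m+n-2)/k+1$, and that the original map is a $k$-fold cover of the reduced one. But a $k$-fold cover of an embedding is \emph{not} an embedding when $k>1$: concretely, your own parametrisation gives $f(\rho,\theta+2\pi/k)=f(\rho,\theta)$ whenever $k$ divides both $m-1$ and $n-1$, so $f$ is $k$-to-one on the punctured disk. Thus ``it suffices to treat $k=1$'' is false for the purpose of proving embeddedness; if anything, your observation shows that the conjecture as literally stated fails for $k>1$ (for instance $m=7$, $n=11$, $k=2$ satisfies all hypotheses yet yields a $2$-to-$1$ map). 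Either the authors intend $k=1$ throughout, or the statement needs a correction; in any case your argument cannot establish it for $k>1$.

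For $k=1$, by contrast, you are selling yourself short. The monotonicity argument for $\rho$ is clean and correct. And your $\theta$-reconstruction actually goes through in all sub-cases: in the ``both signs flip'' branch you reach $a\phi\equiv\pi$ and $b\phi\equiv0\pmod{2\pi}$ and rule it out via $b$ odd; in the degenerate branches $\cos(a\theta)=0$ or $\sin(b\theta)=0$ one checks (using $\cos(a\theta_1)=0\Rightarrow 2a\theta_1\equiv\pi$, respectively $\sin(b\theta_1)=0\Rightarrow 2b\theta_1\equiv0$) that the same pair of congruences on $\phi=\theta_1+\theta_2$ reappears, giving the same contradiction. So for $k=1$ your outline is in fact a proof, and it would resolve the conjecture in that case---which is more than the paper claims.
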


\begin{figure}[h]
	\centerline{ 
		\includegraphics[width=2.5in]{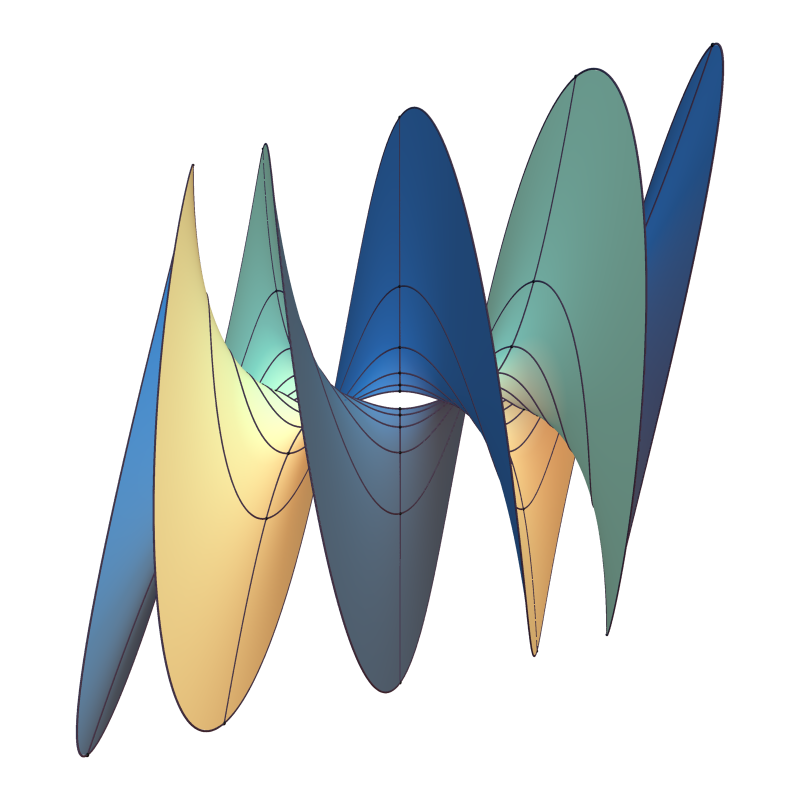}
		\includegraphics[width=2.5in]{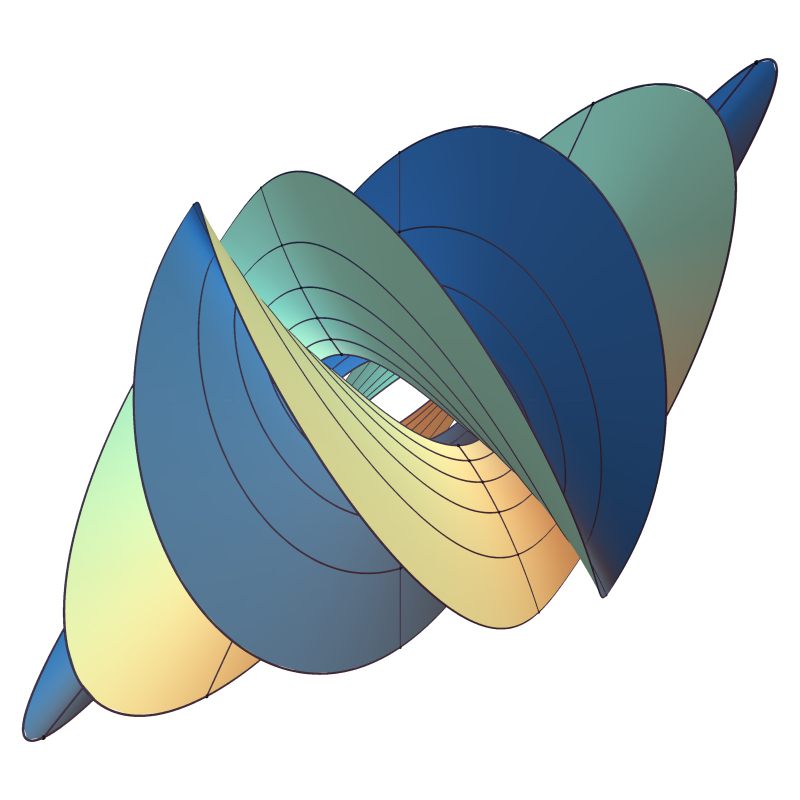}
	}
	\caption{Embedded ends of type $(2,6,7)$ and $(4,6,9)$}
	\label{figure:mn-2}
\end{figure}

\section{Small total curvature}
\label{sec:ftc}
In this section, we give a complete classification of properly embedded harmonic surfaces with total curvature equal to $-2\pi$ and $-4\pi$.  In addition, we classify embedded tori of total curvature $-6\pi$.

Suppose 
\[
f(z)=\re\int^z\left(\omega_1,\omega_2,\omega_3\right)
\]
defines a harmonic map from $X'=X-\{p_1,\ldots,p_m\}$ into $\R^3$, with the genus of $X$ equal to $g$.  By \ref{thm:gaussbonnet}, 

\[
\int_{X'}KdA=-2\pi\left(2g-2+\sum_{j=1}^m n^j\right)
\]
where $n_j$ is the order of the end $p_j$.  If we fix the total curvature and genus then this gives a restriction on the number and order of the ends.

\subsection{Properly embedded harmonic surfaces of total curvature \texorpdfstring{$-2\pi$}{-2pi}}
\label{subsection:-2pi}
There are no minimal surfaces with total curvature $-2\pi$, while two of the simplest harmonic surfaces have total curvature $-2\pi$.

\begin{theorem}
There are two families of properly embedded harmonic surfaces with total curvature $-2\pi$, both genus zero:
\begin{enumerate}
\item
One end of type (0,0,1) and one end of type (1,2,2).
\item
One end of type (2,2,3).
\end{enumerate}
\end{theorem}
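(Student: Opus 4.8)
The plan is to combine the Gauss--Bonnet formula from Theorem~\ref{thm:gaussbonnet} with the classification of low-order ends from Theorems~\ref{thm:order2} and~\ref{thm:order3}, first to pin down the possible genus/end-configurations, and then to rule out or realize each one. Write $X$ for a compact genus-$g$ Riemann surface with punctures $p_1,\ldots,p_m$ of orders $n^1,\ldots,n^m$. By Theorem~\ref{thm:gaussbonnet}, total curvature $-2\pi$ forces $2g-2+\sum_j n^j = 1$, i.e. $\sum_j n^j = 3-2g$. Since each $n^j\ge 1$ and properness forces $m\ge 1$, the only options are: $g=0$ with $(n^j)$ one of $(3)$, $(1,2)$, $(1,1,1)$; or $g=1$ with $m=1$, $n^1=1$. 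So the first step is this finite enumeration; the bulk of the work is then a case-by-case analysis.

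Next I would dispose of the cases that do not occur. For $g=1$, $m=1$, $n^1=1$: the single coordinate 1-form with a simple pole has a real residue $\rho$, and if $\rho\ne 0$ the end is logarithmically growing in one direction, while if $\rho=0$ all three $\omega_i$ are holomorphic at the puncture and extend to holomorphic 1-forms on the torus $X$, so $f$ extends to a harmonic map of the compact torus into $\R^3$, which is constant --- contradiction. So one shows a genus-one, one-end, order-one surface cannot be complete and properly embedded (indeed it cannot even be a nonconstant complete immersion). For the three-ends case $g=0$, $(1,1,1)$: here one has three order-one ends on the sphere, each asymptotic (after an affine normalization, cf. the order-one discussion and the Proposition on normal forms) to a logarithmic graph; the obstruction is that three such ends on $S^2$ cannot be arranged to close up into an embedded surface --- one argues via the half-space/maximum-principle type reasoning (the logarithmic ends each lie asymptotically above or below a plane, and a properly embedded genus-zero surface with three ends of this type would violate an intersection or separation argument). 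This is the step I expect to be the main obstacle, since ruling out multi-ended configurations of low-order ends is exactly where the paper notes the subtlety (and it parallels the Lopez--Ros-type phenomena referenced in the introduction); I would look to adapt the separation argument used for minimal surfaces of genus zero, using that each order-one end is graphical over a punctured disk.

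That leaves exactly two surviving configurations, $g=0$ with ends $(3)$ and $g=0$ with ends $(1,2)$, and for these I would exhibit the families. For $(n^1)=(3)$ on $S^2=\C\cup\{\infty\}$: a single end of order $3$ at $\infty$ of reduced type --- by Theorem~\ref{thm:order2}/\ref{thm:order3} the only reduced type with a properly embedded end that can occur alone is $(2,2,3)$ (types $(0,0,3)$ and $(n_1,n_2,n_2)$ are excluded by Lemma~\ref{lemma:00n} and Lemma~\ref{lm.(j,k,k)}, and e.g. $(1,2,3)$, $(0,2,3)$, $(0,1,3)$ need a second end to be complete since the other puncture at $0$ carries nontrivial pole data); so one recovers the $(2,2,3)$ family, realized by the graphs $\re\int^z(1\,dz,\,i\,dz,\,z\,dz)$ and its affine deformations, which are properly embedded by Proposition~\ref{thm:22n}. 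For $(n^1,n^2)=(1,2)$ on $S^2$ with punctures at $0$ and $\infty$: the order-one end must be (in reduced form) of type $(0,0,1)$ and the order-two end of a type from Theorem~\ref{thm:order2}; since $(0,0,2)$ is never proper (Lemma~\ref{lemma:00n}) and $(0,1,2)$, $(0,2,2)$, $(1,2,2)$ are the remaining options, a direct compatibility check --- matching the residues across the two punctures so that all periods are imaginary, which on the sphere with two punctures is automatic once residues are real and opposite --- shows the configuration closes up precisely as the $(0,0,1)$-plus-$(1,2,2)$ family, realized for instance by the catenoid-like example $\re\int^z(\tfrac1z\,dz,\,dz,\,(i+z)\,dz)$ (Proposition~\ref{thm:12n} with $n=2$, up to affine equivalence), whose embeddedness is covered by that proposition. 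Finally I would note that in both families, uniqueness-up-to-affine-equivalence follows from the normal-form reductions already established, so these are exactly two families and the theorem is proved.
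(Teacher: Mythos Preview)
Your overall strategy matches the paper's: use Gauss--Bonnet to enumerate $(g,(n^j))$, then eliminate or realize each case. The paper handles the eliminations more directly via the maximum principle (three order-one ends on a sphere, or one order-one end on a torus, are contained in a half-space) rather than your residue/constant-map argument for $g=1$; your version is essentially fine, though the case $\rho\ne 0$ is actually impossible by the residue theorem on the one-punctured torus, so you should say that rather than leave it as a dangling logarithmic-growth remark.

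The genuine gap is in your treatment of the two-end case $(n^1,n^2)=(1,2)$. You list $(0,1,2)$, $(0,2,2)$, $(1,2,2)$ as the candidate types for the order-two end but then assert without argument that only $(1,2,2)$ survives the ``compatibility check.'' This is the crux, and the paper's approach is cleaner: on $\mathbb{P}^1$ with punctures at $0$ and $\infty$, the degree constraint on meromorphic $1$-forms forces each $\omega_j$ to be of the form $(\alpha_j/z+\beta_j)\,dz$ with $\alpha_j\in\R$; after an affine transformation one has $\omega_1=\beta_1\,dz$, $\omega_2=\beta_2\,dz$, $\omega_3=(1/z+\beta_3)\,dz$. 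Regularity of the immersion forces $\beta_1,\beta_2$ to be $\R$-independent (otherwise the image lies in a plane), so both $\omega_1,\omega_2$ have pole order $2$ at $\infty$, and since $\beta_1,\beta_2$ span $\C$ over $\R$ one can kill the order-$2$ coefficient of $\omega_3$, leaving reduced type $(1,2,2)$. Without this degree/regularity argument you have not excluded $(0,2,2)$ or $(0,1,2)$.

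Two smaller points: your explicit realization of the $(0,0,1)+(1,2,2)$ family has $\omega_3=(i+z)\,dz$, which gives an order-$3$ pole at $\infty$ (this is the $n=3$ case of Proposition~\ref{thm:12n}); you want $\omega_3=(i+1)\,dz$ or any constant. And in the single order-$3$ end case, your exclusion of $(1,2,3)$, $(0,2,3)$, $(0,1,3)$ by ``the other puncture at $0$ carries nontrivial pole data'' is the right idea but should be phrased as the paper does: if the reduced type at $\infty$ has $n_1<2$, then $\omega_1$ has total pole order $<2$ at $\infty$, hence by the degree constraint must have a pole elsewhere on $\mathbb{P}^1$, contradicting the one-end hypothesis.
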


\begin{proof}
Suppose the total Gauss curvature equals $-2\pi$.  Then 
\[
3-2g=\sum_{j=1}^m n^j
\]

If the genus of $X$ is $g=0$ then
\[
3=\sum_{j=1}^m n^j
\]
and there are three possible types of surfaces:
\begin{enumerate}
\item
Three ends, each of order one.
\item
Two ends, one of order one and the other of order two.
\item
One end of order three.
\end{enumerate}

A surface with three order one ends would violate the maximum principle because it would be contained in a half-space.  

As for a surface with two ends, of order one and two, we can assume the order one end is at $0$ and the order two end is at $\infty$.  The residues of the $\omega_j$ are real, and so after applying an affine transformation, 
\[
\begin{split}
\omega_1&=\left(a_1+i b_1\right)dz\\
\omega_2&=\left(a_2+i b_2\right)dz\\
\omega_3&=\left(\frac{1}{z}+a_3+i b_3\right)dz
\end{split}
\]
We must have $a_1+i b_1$ and $a_2+i b_2$ independent over $\R$.  Otherwise, $f$ fails to be embedded.  

\begin{figure}[h]
	\centerline{ 
		\includegraphics[width=3in]{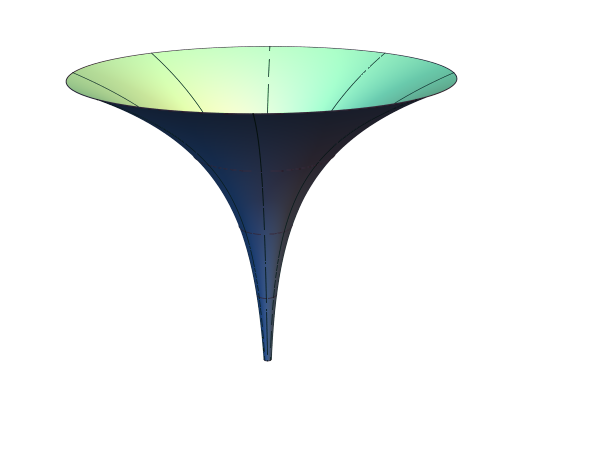}
		\includegraphics[width=3in]{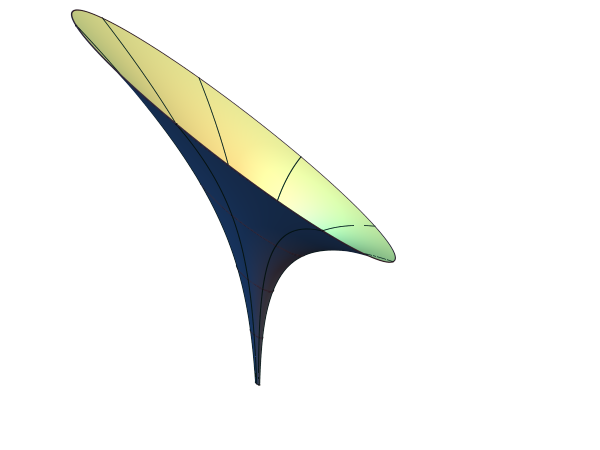}
	}
	\caption{Embedded spheres with one end of type (0,0,1) and one end of type (2,2,1)}
	\label{figure:(0,0,1)+(2,2,1)}
\end{figure}

If we have one end of order three then place the end at $\infty$.  By Lemma \ref{lm.(j,k,k)}, an end of order $(k,3,3)$ can't be embedded.  Hence, our end has order is of the form $(j,k,3)$, with $j\leq k<3$. If $j<2$ then $\omega_1$ has a pole of order $2-j$ at some point $z\in\C$, but then the surface has more than one end.  Thus, the end is of the form $(2,2,3)$:    
\[
\begin{split}
\omega_1&=\left(a_1+ib_1\right)dz\\
\omega_2&=\left(a_2+ib_2\right)dz\\
\omega_3&=\left(a_3+ib_3+(c_3+id_3)z\right)dz\\
\end{split}
\]
If $a_1+ib_1$ and $a_2+ib_2$ independent over $\R$ then $f$ is an embedding.  

\begin{figure}[h]
	\centerline{ 
		\includegraphics[width=3in]{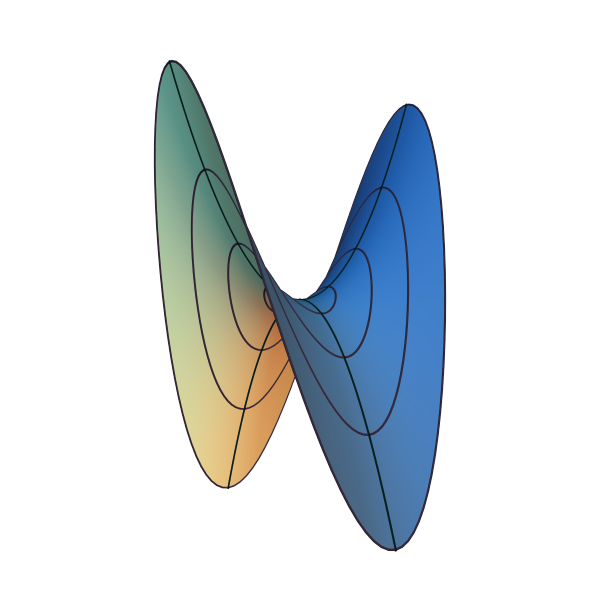}
	}
	\caption{Embedded sphere with one end of type (2,2,3)}
	\label{figure:(2,2,3)}
\end{figure}

If the genus of $X$ is $g=1$ then 
\[
1=\sum_{j=1}^m n^j
\]
and we can have only one end of order one.  This will violate the maximum principle.  
\end{proof}

Both of these examples show the Lopez-Ros Theorem doesn't hold for harmonic surfaces.

\subsection{Properly Embedded Harmonic Surfaces of total curvature\texorpdfstring{$-4\pi$}{-4pi}}
\label{subsection:-4pi}

Osserman showed in \cite{oss1} that the catenoid (two ends of type $(1,2,2)$) and Enneper's surface (one end of type $(3,4,4)$) are the only complete minimal surfaces with total curvature $-4\pi$.  There are many more properly embedded harmonic surfaces with total curvature $-4\pi$.

The examples in this section of surfaces with three or four ends demonstrate that the Hoffman-Meeks Conjecture doesn't hold for harmonic surfaces.

The different examples with two ends show that Schoen's Two-End Theorem also doesn't hold for harmonic surfaces.

\begin{theorem}
There are eleven families of properly embedded harmonic surfaces with total curvature $-4\pi$, all genus zero:
\begin{enumerate}
\item
Four ends of type (0,0,1).
\item
Two ends of type (0,0,1) and one end of type (0,1,2).
\item
Two ends of type (0,0,1) and one end of type (0,2,2).
\item
Two ends of type (0,0,1) and one end of type (1,2,2).
\item
One end of type (0,0,1) and one end of type (1,2,3).
\item
One end of type (0,0,1) and one end of type (2,2,3).
\item
Two ends of type (0,1,2).
\item
One end of type (0,1,2) and one end of type (1,2,2).
\item
Two ends of type (1,2,2).
\item
One end of type (2,2,4).
\item
One end of type (2,3,4).
\end{enumerate}
\end{theorem}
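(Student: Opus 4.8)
The plan is to run the same Gauss--Bonnet bookkeeping argument used in the $-2\pi$ case, now with the right-hand side equal to $4$ instead of $3$, and then eliminate the arithmetically possible end configurations that cannot be realized by embedded surfaces. Starting from Theorem \ref{thm:gaussbonnet}, total curvature $-4\pi$ forces $2g-2+\sum_j n^j = 2$, so $\sum_j n^j = 4-2g$. For $g=1$ we get $\sum_j n^j = 2$, i.e. either one end of order $2$ or two ends of order $1$; both are ruled out immediately (two order-one ends put the torus in a half-space, contradicting the maximum principle, and a single order-two end on a genus-one surface is likewise excluded — an end of order two is asymptotically a catenoid or half-slab, which cannot bound on one side and carry genus without violating the half-space/maximum-principle reasoning, but the clean argument is that $\omega_i$ with a single double pole on a torus forces the surface into a slab). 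For $g \geq 2$ the sum is negative, impossible. So all examples are genus zero with $\sum_{j=1}^m n^j = 4$.

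The heart of the proof is the genus-zero enumeration. The partitions of $4$ into end-orders are: $1+1+1+1$; $1+1+2$; $1+3$; $2+2$; and $4$. For each, I would determine the reduced types and decide embeddedness using the tools already in the excerpt. The four-order-one case can only give four $(0,0,1)$ ends (any higher $n_i$ would force extra ends by pole-counting on $\C$), and one must check it is actually realizable as an embedding — this is case (1); unlike three order-one ends, four of them need not lie in a half-space, so the maximum principle does not obstruct, and an explicit example closes it. For $1+1+2$, the order-two end has reduced type among $(1,2,2)$, $(0,2,2)$, $(0,1,2)$ (by Theorem \ref{thm:order2}, with $(0,0,2)$ excluded as never proper), giving cases (2),(3),(4); in each the two order-one ends are $(0,0,1)$'s and embeddedness follows from normalizing the residues and invoking the independence-over-$\R$ condition as in the $-2\pi$ proof. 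For $1+3$, the order-three end is of a reduced embeddable type from Theorem \ref{thm:order3} — namely $(2,2,3)$, $(1,2,3)$, $(0,2,3)$, or $(0,1,3)$ — but having only one additional end constrains which of $\omega_1,\omega_2$ may have finite poles away from the end; placing the order-three end at $\infty$, $\omega_1$ holomorphic everywhere except a possible pole contributing the order-one end forces the order-one end to be $(0,0,1)$ and the order-three end to be $(1,2,3)$ or $(2,2,3)$ (cases (5),(6)); the types $(0,2,3)$ and $(0,1,3)$ cannot pair with a lone $(0,0,1)$ because realizing them requires a second structural pole, as one sees by matching the recipes in Propositions \ref{thm:(0,1,n)} and \ref{prop:(0,2,3)} (their companion ends are $(0,1,2)$ and $(0,2,3)$, not $(0,0,1)$). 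For $2+2$, both ends have reduced type in $\{(1,2,2),(0,2,2),(0,1,2)\}$, and one checks which unordered pairs admit an embedded realization: $(0,1,2)+(0,1,2)$, $(0,1,2)+(1,2,2)$, and $(1,2,2)+(1,2,2)$ are cases (7),(8),(9), while combinations involving a $(0,2,2)$ planar end paired with another order-two end can be reduced away or fail (a $(0,2,2)$ end needs $\omega_1,\omega_2$ holomorphic with $\omega_3$ having the double pole, and forcing a second double-pole end then makes $\omega_3$ a ratio that either reduces the type or obstructs properness). Finally $4$ gives one end of order four; by Lemma \ref{lm.(j,k,k)} types $(n_1,4,4)$ are excluded, and pole-counting on $\C$ (a single end forces $\omega_1,\omega_2$ to have poles only at $\infty$) rules out any $n_1<2$ except where it creates extra ends, leaving $(2,2,4)$ and $(2,3,4)$ — cases (10),(11) — with embeddedness supplied by Proposition \ref{thm:22n} for $(2,2,4)$ and by the $(2,3,n)$ analysis of Proposition \ref{thm:(2,3,n)} for $(2,3,4)$.

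For the \emph{existence} direction of each of the eleven families, I would exhibit explicit $1$-forms: the $(2,2,n)$ graphs, the $(1,2,n)$-with-$(0,0,1)$ and $(0,1,n)$-with-$(0,1,2)$ spheres from Propositions \ref{thm:12n} and \ref{thm:(0,1,n)}, the catenoid for two $(1,2,2)$ ends, the $(2,3,n)$ example from Proposition \ref{thm:(2,3,n)} for $(2,3,4)$, suitable four-noid type rational data for four $(0,0,1)$ ends, and so on; in each case the verification is the now-standard routine of writing $f$ in polar or Cartesian coordinates, checking $f_x\times f_y\neq 0$, one-to-oneness, and properness via an unbounded coordinate. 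None of these involve new ideas.

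\textbf{Main obstacle.} The real work — and the step most likely to hide a subtlety — is the \emph{exhaustiveness} claim: proving that the arithmetically possible configurations I have \emph{not} listed genuinely cannot occur as properly embedded surfaces. The delicate subcases are the order-two-end pairings involving a $(0,2,2)$ planar end, and the $1+3$ partition where I must argue that a lone $(0,0,1)$ companion is incompatible with order-three ends of type $(0,1,3)$ or $(0,2,3)$. This requires a careful pole-divisor analysis on $\C\cup\{\infty\}$: counting zeros and poles of each $\omega_i$ as a rational $1$-form, tracking that the prescribed pole orders at the ends plus the finiteness elsewhere pin down the numerators up to affine moves, and then showing the resulting surface either reduces to a type already on the list or fails properness/embeddedness (invoking Lemma \ref{lemma:00n}, Lemma \ref{lm.(j,k,k)}, and the maximum principle as needed). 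I expect the genuinely new checks here to be short but fiddly, and this is where I would be most careful to avoid missing a family.
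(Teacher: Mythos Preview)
Your overall architecture matches the paper's exactly: Gauss--Bonnet forces $\sum n^j = 4-2g$, genus $\ge 1$ is killed by the maximum principle, and the genus-zero case is the five-way partition of $4$ followed by a type-by-type sieve using Theorems \ref{thm:order2}, \ref{thm:order3}, Lemmas \ref{lemma:00n} and \ref{lm.(j,k,k)}, and explicit examples. The existence side is handled the same way in both.

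There are three places where your exclusion arguments are weaker than the paper's and should be replaced. First, in the $2+2$ case you try to rule out a planar $(0,2,2)$ companion by a divisor/reduction argument on $\omega_3$; the paper instead invokes the maximum principle directly (a planar end traps a harmonic coordinate), which is both shorter and actually correct---your sketch does not obviously close. Second, in the $1+3$ case your exclusion of $(0,1,3)$ and $(0,2,3)$ appeals to the particular companion ends appearing in Propositions \ref{thm:(0,1,n)} and \ref{prop:(0,2,3)}; that only constrains those specific examples, not all realizations. The paper's argument is the clean divisor count you allude to elsewhere: with the order-three end at $0$ having $n_1=0$ and the only other puncture at $\infty$ of order one, $\omega_1$ would be a nonzero $1$-form on $\mathbb{P}^1$ with total pole order $\le 1$, which is impossible. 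Third, in the single order-four case your stated exclusions (Lemma \ref{lm.(j,k,k)} for $(n_1,4,4)$ and pole-counting for $n_1<2$) leave $(3,3,4)$ unaddressed; the paper disposes of it by the same winding-number mechanism behind Lemma \ref{lm.(j,k,k)}, now applied to $\omega_1,\omega_2$ (both order $3$, independent leading coefficients in reduced form, hence projection to the $x_1x_2$-plane has winding number $2$).
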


\begin{proof}
Suppose the total Gauss curvature equals $-4\pi$.  Then 
\[
4-2g=\sum_{j=1}^m n^j
\]

If the genus of $X$ is $g=0$ then
\[
4=\sum_{j=1}^m n^j
\]
and there are five possible types of surfaces:
\begin{enumerate}
\item
Four ends, each of order one.
\item
Three ends, two of order one and the other of order two.
\item
Two ends, one of order one and the other of order three.
\item
Two ends, both of order two.
\item
One end of order four.
\end{enumerate}

\subsubsection{Four ends of order one.}

Take $4$ rays in $\R^3$ emanating from the same vertex.  As long as they don't lie in a half-plane, we conjecture that there exists a properly embedded surface with order one ends such that, outside of a compact surface, each end lies in a tubular neighborhood of one of the rays.  We can place the ends at the $3rd$ roots of unity and $\infty$.  After rotating the surface, if necessary, we can assume that the end at $\infty$ points in the direction of the positive $z-axis$ and that the end at $1$ lies in the half-plane $y=0$, $x>0$.  

\begin{proposition}
The surface given on $\C-\{1,e^{2\pi i/3},e^{4\pi i /3}\}$ by

\[
\begin{split}
\omega_1&=\left(\frac{2}{z-1}-\frac{1}{\left(z-e^{2\pi i/3}\right)}-\frac{1}{\left(z-e^{4\pi i/3}\right)}\right)dz=\frac{3(z+1)}{z^3-1}dz\\
\omega_2&=\left(\frac{\sqrt{3}}{z-e^{2\pi i/3}}-\frac{\sqrt{3}}{z-e^{4\pi i/3}}\right)dz=\frac{3i(z-1)}{z^3-1}dz\\
\omega_3&=\left(\frac{1}{z-1}+\frac{1}{z-e^{2\pi i/3}}+\frac{1}{z-e^{4\pi i/3}}\right)dz=\frac{3z^2}{z^3-1}dz\\
\end{split}
\]

is a regular proper embedding.
\end{proposition}

\begin{proof}
If $\alpha(z)=\overline{z}$ and $\sigma(z)=e^{2\pi i/3}$ then 
\[
\alpha^*(\omega_1,\omega_2,\omega_3)=(\overline{\omega_1},-\overline{\omega_2},\overline{\omega_3})
\]
and
\[
\sigma^*(\omega_1+i\omega_2,\omega_3)=\left(e^{2\pi i/3}(\omega_1+i\omega_2),\omega_3\right)
\]
and so the surface is invariant under reflection in the $(x,z)$-plane and a rotation about the $z$-axis by $e^{2\pi i/3}$.  In fact, it is invariant under reflections in planes through the origin we normal vectors $\displaystyle e^{i\pi/6}$ and $\displaystyle e^{-i\pi/6}$.  Using the fact that 
\[
f_x\times f_y=\im\left(\omega_2\overline{\omega_3},-\omega_1\overline{\omega_3},\omega_1\overline{\omega_2}\right)
\]

we can show the surface is regular and embedded by focusing on the region 
\[
X=\{z| 0\leq\arg(z)\leq\pi/3\}
\]
If $z=re^{it}\in X$ with $t>0$ then 
\[
\im(\omega_1\overline{\omega_3})=\frac{9}{|z^3-1|^2}\im\left((z+1)\overline{z}^2\right)-r^2\sin{t}(r+2\cos{t})<0
\]

and so $f(X)$ is a graph over the $xz$-plane.  If $t=0$ then
\[
\im(\omega_1\overline{\omega_2})=-\frac{9(z^2-1)}{(z^3-1)^2}\neq 0
\]
and so $f$ is regular on $\overline{X}$.  Hence, $f$ is regular and embedded.

To see that $f$ is proper, notice that the third coordinate function 
\[
f_3(z)=\log(|z^3-1|)\rightarrow\pm\infty
\]
as $z\rightarrow (1,e^{2\pi i /3}, e^{4\pi i /3},\infty)$.
\end{proof}

\begin{figure}[h]
	\centerline{ 
		\includegraphics[width=2.8in]{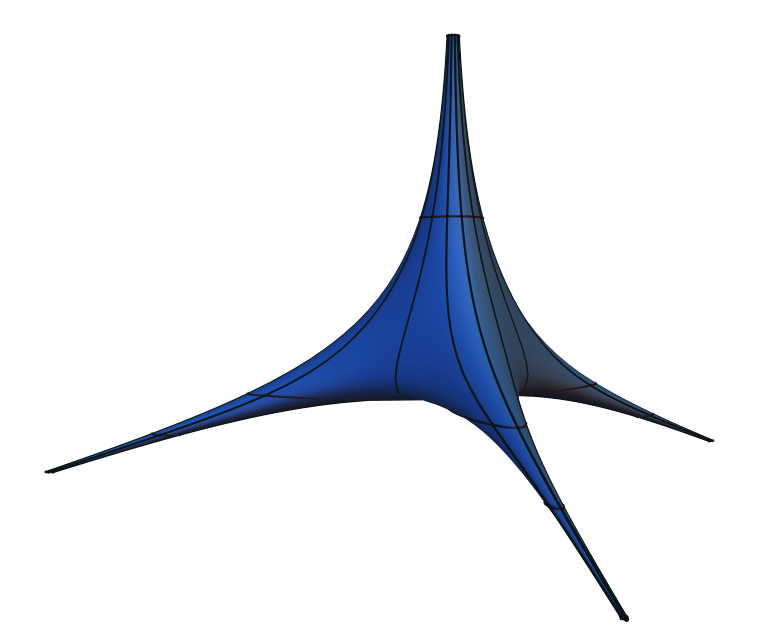}
		\includegraphics[width=2.8in]{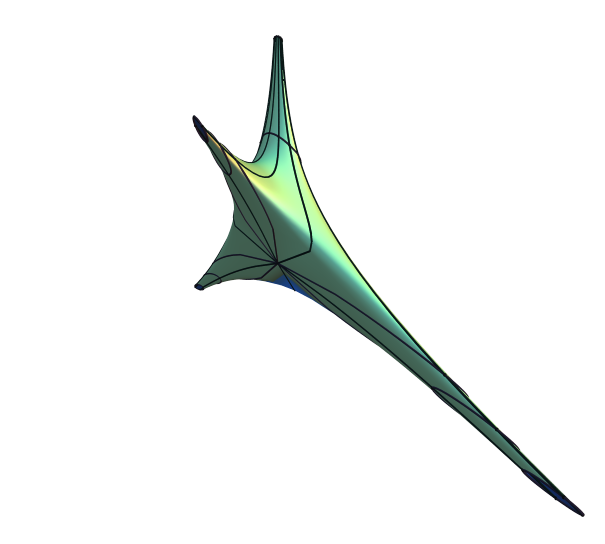}
	}
	\caption{Two embedded spheres with four (0,0,1) ends}
	\label{figure:4(0,0,1)}
\end{figure}

\subsubsection{Three ends, two of order one and the other of order two.}

One can construct properly embedded surfaces with two order one ends and one order two end.  By Theorem \ref{thm:order2}, the order two end can be of type (0,1,2), (0,2,2), or (1,2,2).

If the order two end is of type (0,1,2), we can assume that end opens in the positive z direction around the x-axis (it's actually of the form (2,0,1)).  There is a lot of flexibility for the order one ends.  One needs to point in the positive y direction, and the other needs to point in the negative y direction.  The one forms will be of the form 

\[
\begin{split}
\omega_1&=\left(\frac{a_1}{z-1}+\frac{b_1}{z+1}+c_1\right)dz\\
\omega_2&=\left(\frac{a_2}{z-1}-\frac{a_2}{z+1}\right)dz\\
\omega_3&=\left(\frac{a_3}{z-1}+\frac{b_3}{z+1}\right)dz\\
\end{split}
\]

The surface won't be regular if $c_1\in\R$. 

\begin{figure}[h]
	\centerline{ 
	\includegraphics[width=2.5in]{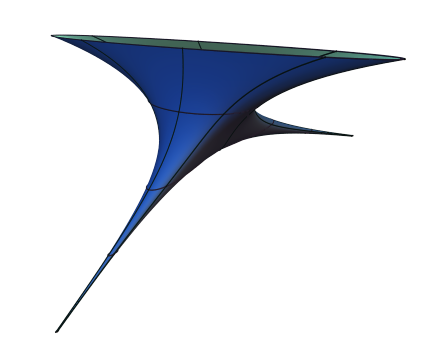}
	}
	\caption{Embedded sphere with  two $(0,0,1)$ ends and one (2,0,1) end}
	\label{figure:(0,0,1)x2+(0,1,2)}
\end{figure}

\begin{proposition}
The surface given on $X=\C-\{\pm 1\}$ by 
\[
\begin{split}
\omega_1&=idz\\
\omega_2&=\left(\frac{1}{z-1}-\frac{1}{z+1}\right)dz\\
\omega_3&=\left(\frac{1}{z-1}+\frac{1}{z+1}\right)dz\\
\end{split}
\]
is a regular proper embedding.
\end{proposition}

\begin{proof}
The surface normal vector is 
\[
f_x\times f_y=\frac{1}{(1 - 2 x + x^2 + y^2) (1 + 2 x + x^2 + y^2)}\left(-8y,-2x(x^2+y^2-1),4(x^2-y^2-1)\right)
\]
which is nonzero on $X$.  Hence, the surface is regular.

In order to show $f$ is embedded, we compute
\[
f(x,y)=\left(-y,\log{\left(\frac{(1-x)^2+y^2}{(1+x)^2+y^2}\right)},2\log{\left(4x^2y^2+(x^2-y^2-1)^2\right)}\right)
\]
Suppose that  $f(x_1,y_1)=f(x_2,y_2)$.  The first coordinate implies that $y_2=y_1$.  The second coordinate implies that $x_1=x_2$ or $y^2=x_1x_2-1$.  If $y^2=x_1x_2-1$ then the third coordinate implies that $x_1^2=x_2^2$.  Hence, $f$ is embedded.

To see that $f$ is proper, note that the third coordinate of $f$ blows up as $z\rightarrow\infty$, and the second coordinate of $f$ blows up as $z\rightarrow\pm 1$.  
\end{proof}

If the order two end is of type (0,2,2), we can assume it has vertical normal (this end will actually be of the form (2,2,0)).  Again, there is a lot of flexibility for the order one ends.  One needs to point in the positive z direction, and the other needs to point in the negative z direction.  The simplest form of this surface is a graph over $\C-\{\pm 1\}$ with one-forms

\[
\begin{split}
\omega_1&=dz\\
\omega_2&=idz\\
\omega_3&=\left(\frac{1}{z-1}-\frac{1}{z+1}\right)dz\\
\end{split}
\]


\begin{figure}[h]
	\centerline{ 
		\includegraphics[width=3in]{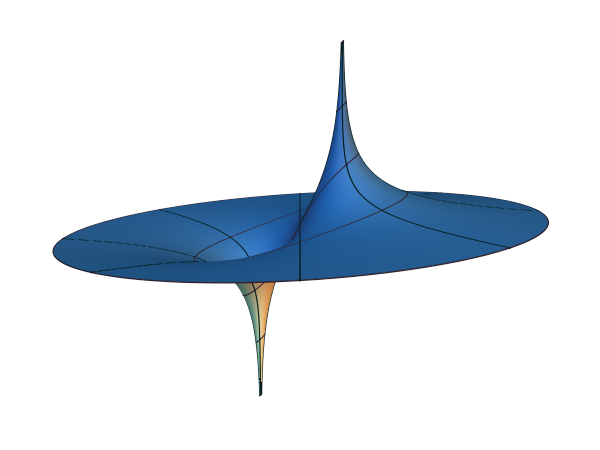}
	}
	\caption{Embedded sphere with two $(0,0,1)$ ends and one $(2,2,0)$ end}
	\label{figure:2(0,0,1)+(2,2,0)}
\end{figure}

If the order two end is of type (1,2,2), we can assume it has vertical normal and opens in the positive z direction.  As long as one of the order one ends points in the negative z direction, the other order one end can point in any direction.  Two versions of this surface - both graphs over $\C-\{\pm 1\}$, have one-forms


\[
\begin{split}
\omega_1&=dz\\
\omega_2&=idz\\
\omega_3&=\left(\frac{1}{z-1}+\frac{1}{z+1}\right)dz\\
\end{split}
\]
and
\[
\begin{split}
\omega_1&=dz\\
\omega_2&=idz\\
\omega_3&=\left(\frac{1}{z-1}-\frac{1}{2(z+1)}\right)dz\\
\end{split}
\]

\begin{figure}[h]
	\centerline{ 
		\includegraphics[width=3in]{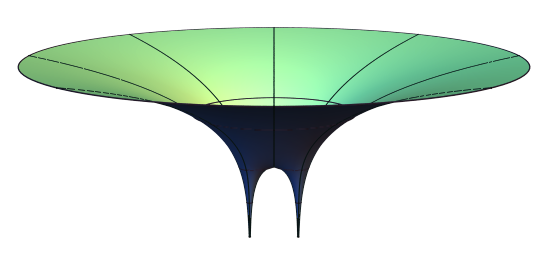}
		\includegraphics[width=3in]{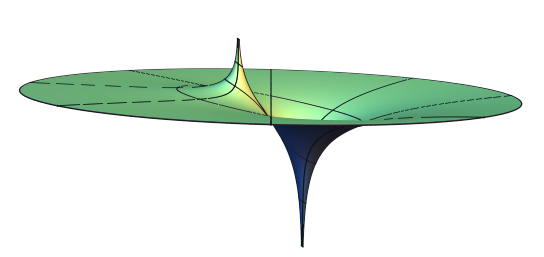}
	}
	\caption{Embedded spheres with two $(0,0,1)$ ends and one $(2,2,1)$ end}
	\label{figure:4(0,0,1)b}
\end{figure}

\subsubsection{Two ends, one of order one and the other of order three}

There are more obstructions to constructing a properly embedded surface with one order one end and one order three end.  Place the order one end at infinity and the order three end at zero.  By Theorem \ref{thm:order3}, the possible order three embedded ends are (0,1,3), (0,2,3), (1,2,3), and (2,2,3).  The (0,1,3) and (0,2,3) cases don't work because that forces the one-form with order zero at $z=0$ to have order two at infinity.  The example with an end of type (1,2,3) is discussed in section \ref{sec:order3}.

One can add an end of order one to the (2,2,3) example given in section \ref{sec:order3} to create a graph over $\C-\{\pm 1\}$  with one forms

\[
\begin{split}
\omega_1&=dz\\
\omega_2&=idz\\
\omega_3&=\left(\frac{1}{z}+z\right)dz\\
\end{split}
\]

\begin{figure}[h]
	\centerline{ 
		\includegraphics[width=3in]{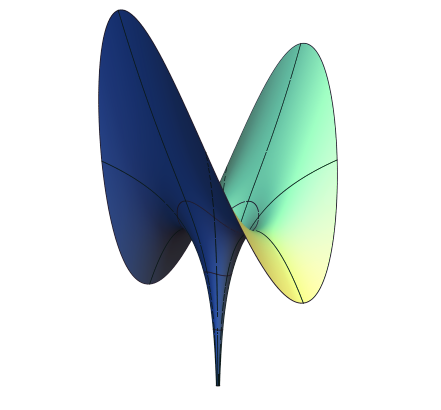}
	}
	\caption{Embedded sphere with (0,0,1) and (2,2,3) ends}
	\label{figure:(0,0,1)+(2,2,3)}
\end{figure}

\subsubsection{Two ends, both of order two.}

In terms of two order two ends, the possible end types are (0,1,2), planar (0,2,2), and catenoid (1,2,2).  Two (1,2,2) ends is well known - the catenoid.  If one of the ends is planar then the maximum principle is violated.  Surfaces with two (0,1,2) ends are discussed in Proposition \ref{thm:(0,1,n)}, section \ref{sec:order3}.  There are examples with ends of type (0,1,2) and (1,2,2).

\begin{proposition}
The surface given on $X=\C^*$ by
\[
\begin{split}
\omega_1&=\left(\frac{i}{z^2}+i\right)dz\\
\omega_2&=dz\\
\omega_3&=\frac{1}{z}dz\\
\end{split}
\]

is a regular proper embedding.
\end{proposition}

\begin{proof}
In order to show $f$ is embedded, we compute
\[
f(r,t)=\left(-\sin{t}\left(\frac{1}{r}+r\right),r\cos{t},\log{r}\right)
\]
which is clearly one-to-one.  The numerator of the surface normal vector is 
\[
f_r\times f_t=\left(\sin{t},-\frac{(1+r^2)\cos{t}}{r^2},r+\frac{\cos(2t)}{r}\right)
\]
and the first two coordinates are never simultaneously equal to zero.

To see that $f$ is proper, note that the third coordinate of $f$ blows up as $z\rightarrow (0,\infty)$.
\end{proof}

\begin{figure}[h]
	\centerline{ 
		\includegraphics[width=3in]{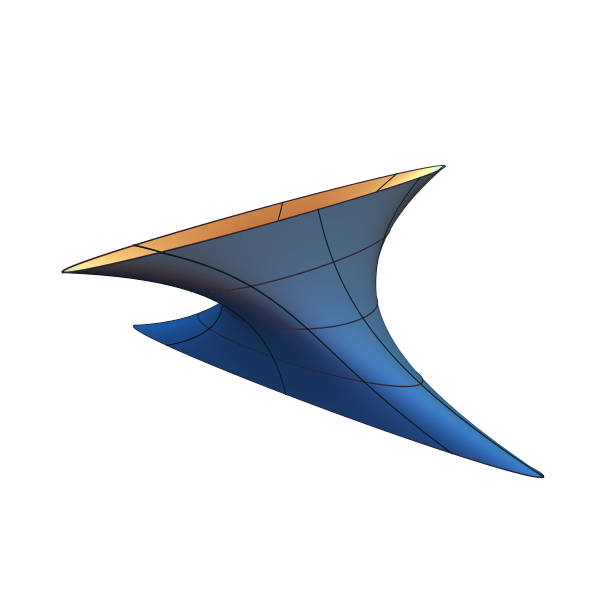}
		\includegraphics[width=3in]{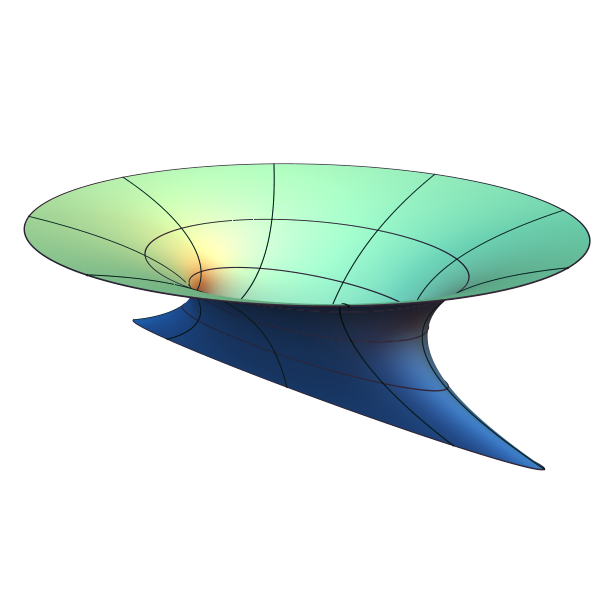}
	}
	\caption{Embedded spheres with two order two ends}
	\label{figure:2(0,1,2),(01,2)+(1,2,2)}
\end{figure}

\subsubsection{One end of order four.}

If a surface has a single order four end then there are several possibilities.   Ends of type (0,1,4), (0,2,4), (0,3,4), (1,2,4), and (1,3,4) can't exist because all would force another end somewhere else or would violate the maximum principle.  Ends of type (3,3,4) and  $(n,4,4)$ aren't embedded, by Lemma \ref{lm.(j,k,k)}.  The remaining types of (2,2,4) and (2,3,4) can form properly embedded surfaces and are discussed in sections \ref{sec:order3} and \ref{sec:(2,3,n)}.

If the genus $g=1$ then $\displaystyle \sum_{k=1}^m\max(n_k)=2$, and there must be two ends of order one or one end of order two.  Both possibilities violate the maximum principle.  Therefore, there are no  tori with total Gauss curvature $-4\pi$.  There can't be any surfaces with $g>1$ either, because then $\displaystyle \sum_{k=1}^m\max(n_k)\leq 0$.
\end{proof}

\subsection{Embedded tori of total curvature \texorpdfstring{$-6\pi$}{-6\pi}}

A classification of embedded harmonic surfaces of total curvature $-6\pi$ would be very tedious. However, 
if one limits one's attention to tori, the classification of ends of low order and the maximum principle together with the Gauss-Bonnet theorem imply:

\begin{theorem}
The only complete, properly embedded tori with total curvature $-6\pi$ can be tori with 
\begin{enumerate}
\item one end of type $(0,0,1)$ and one end of type $(2,2,1)$, or
\item one end of type $(2,2,3)$.
\end{enumerate}
\end{theorem}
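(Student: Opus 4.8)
The plan is to run the same Gauss--Bonnet bookkeeping as in the $-2\pi$ and $-4\pi$ classifications, now with the curvature fixed at $-6\pi$ and the genus fixed at $g=1$. By Theorem~\ref{thm:gaussbonnet}, a torus with total curvature $-6\pi$ satisfies $\sum_{j=1}^m n^j = 3 - 2g + 3 = \ldots$; more precisely, since $\int_{X'}K\,dA = -2\pi(2g-2+\sum_j n^j)$ and $2g-2 = 0$, we get $\sum_{j=1}^m n^j = 3$. So the possible end configurations are exactly: (a) three ends of order one; (b) two ends, one of order one and one of order two; (c) one end of order three. This is the same arithmetic trichotomy that appeared in the $-2\pi$ sphere case, only now on a genus-one surface, which is what makes the maximum-principle arguments bite harder.

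The first step is to eliminate configuration (a): a torus with three order-one ends would be contained in a half-space (each order-one end is asymptotically a logarithmic graph, hence lies on one side of a plane, and with three ends one can still trap the whole surface), violating the strong half-space/maximum principle, exactly as in the $-2\pi$ argument. The second step handles configuration (b): one order-one end and one order-two end. Here I would first invoke Theorem~\ref{thm:order2} to list the admissible embedded order-two end types --- $(1,2,2)$, $(0,2,2)$, $(0,1,2)$ --- and rule out the planar type $(0,2,2)$ by the maximum principle (a surface with a planar end and one other end sits on one side of the limiting plane). For the $(0,1,2)$ possibility, I would argue as in the $-4\pi$ discussion that the order-zero coordinate form is then forced to develop a pole at a third point, producing an extra end and contradicting $m=2$; that leaves the catenoidal type $(1,2,2)$, i.e.\ case (1) of the theorem, an end of type $(2,2,1)$ paired with a $(0,0,1)$ end. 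The third step treats configuration (c): by Lemma~\ref{lm.(j,k,k)}, an order-three end of type $(k,3,3)$ is not embeddable, and types with a coordinate form of order $<2$ at the puncture force a second end somewhere in $\C$ (raising $m$), so the only survivor is $(2,2,3)$, which is case (2).

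The last point to verify is that the genus actually can be one in cases (1) and (2) --- i.e.\ that these are not vacuous. For this I would exhibit (or cite from the companion website / the $\Theta$-function constructions alluded to around Proposition~\ref{thm:22n}) explicit genus-one examples: take an elliptic curve $X = \C/\Lambda$ and build $\omega_1,\omega_2,\omega_3$ with the prescribed pole data at one point (for case (2)) or at two points (for case (1)) and with the periods purely imaginary --- the latter being arrangeable because, as the introduction stresses, one may add holomorphic differentials freely to kill the period obstruction without disturbing the asymptotics or the topology. The main obstacle I expect is precisely this existence half: unlike the sphere, on a torus closing the (now two-dimensional space of) periods while keeping the residues real and the end data in reduced form requires a genuine parameter count, and checking embeddedness of the resulting genus-one surface is the kind of tedious but routine verification the authors defer to the website. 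The non-existence half (steps one through three above) is essentially forced by Gauss--Bonnet plus the two lemmas already proved and should go through cleanly.
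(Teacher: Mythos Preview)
Your overall architecture --- Gauss--Bonnet gives $\sum n^j = 3$, list the three end configurations, then prune using the low-order end classification and the maximum principle --- is exactly what the paper intends. But the pruning arguments you import from the $-4\pi$ sphere classification do not transfer to genus one, and this is a genuine gap.

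Concretely: in case (b) you rule out a $(0,1,2)$ end by saying ``the order-zero coordinate form is then forced to develop a pole at a third point.'' That is a degree count on $\mathbb{P}^1$ (canonical degree $-2$) and simply fails on a torus, where the canonical degree is $0$ and a form with no poles at the two punctures is a perfectly good holomorphic differential. The correct obstruction is different: if $\omega_1$ is holomorphic on the compact torus, then $f_1=\re\int\omega_1$ is a globally defined harmonic function on $X$, hence constant, and the image lies in a plane. More subtly, the residue theorem on the torus kills the $(1,2,3)$ end in case (c) and controls the residues in case (b): a form with a single simple pole (and no other poles) must have residue zero there, hence is actually holomorphic, and the same planarity contradiction follows. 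This residue-theorem step is why $(1,2,3)$, $(0,2,3)$, $(0,1,3)$ all die on a torus with one puncture, and why $(0,1,2)$ and $(0,2,2)$ die with two punctures, leaving only $(2,2,3)$ and $(1,2,2)+(0,0,1)$. Your case (c) sentence ``types with a coordinate form of order $<2$ at the puncture force a second end'' has the same defect and needs the same repair.

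Two minor points. First, your case (a) dismissal (``with three ends one can still trap the whole surface in a half-space'') is too quick: three logarithmic directions need not lie in a common open half-space. The clean argument is again residue-theoretic --- the three residue vectors sum to zero, hence are coplanar, so the orthogonal coordinate is bounded and therefore constant. Second, the theorem as stated is purely a restriction (``the only \ldots\ can be''); existence is handled separately in Sections~\ref{sec:surf(0,0,1)(2,2,1)} and~\ref{sec:surf223}, so your final paragraph on building genus-one examples, while sensible, is not part of this proof.
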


Note that these examples are derived from the simply connected examples with total curvature $-2\pi$ by adding a handle to each one.  In a later paper, we will demonstrate the same is true for harmonic tori with total curvature $-8\pi$ - they are created by adding a handle to each example with total curvature $-4\pi$.

We will give examples of such surfaces in sections \ref{sec:surf(0,0,1)(2,2,1)} and \ref{sec:surf223}.  The torus with ends of type $(0,0,1)$ and $(2,2,1)$ is another example demonstrating that Schoen's Two-End Theorem doesn't hold for harmonic surfaces.

\section{Embedded examples of finite topology}
\label{sec:embedded}

In this section, we give examples of complete, properly embedded  harmonic surfaces showing off the types of embedded ends we have found.
The method of construction is very similar to the one described in Karcher's Tokyo notes \cite{karch3} using the Weierstrass representation for minimal surfaces: Assuming symmetries 
and other  geometric features, one obtains candidate Weierstrass data for a minimal surface. Instead of going through the often difficult (or impossible!) process of closing the periods by adjusting the parameters to obtain a minimal surface, we force close the periods by adding holomorphic 1-forms to the coordinate 1-forms coming from the Weierstrass data. This ensures that we keep the same asymptotic behavior, but forsakes of course the minimality. This simplification is counterbalanced by the added difficulties:

\begin{enumerate}
\item embedded ends of harmonic surfaces can be much more complicated;
\item due to the lack of conformality, the regularity of the parametrization is harder to check;
\item embeddedness is not preserved under deformations due to a lack of a strong maximum principle.
\end{enumerate}

\subsection{Tori with one end of type \texorpdfstring{$(0,0,1)$}{(0,0,1)} and one end of type \texorpdfstring{$(2,2,1)$}{(2,2,1)}}  
\label{sec:surf(0,0,1)(2,2,1)}

It is possible to add a handle to the genus 0 surface with one end of type $(0,0,1)$ and one end of type $(2,2,1)$.

\begin{figure}[h]
	\centerline{ 
		\includegraphics[width=2.5in]{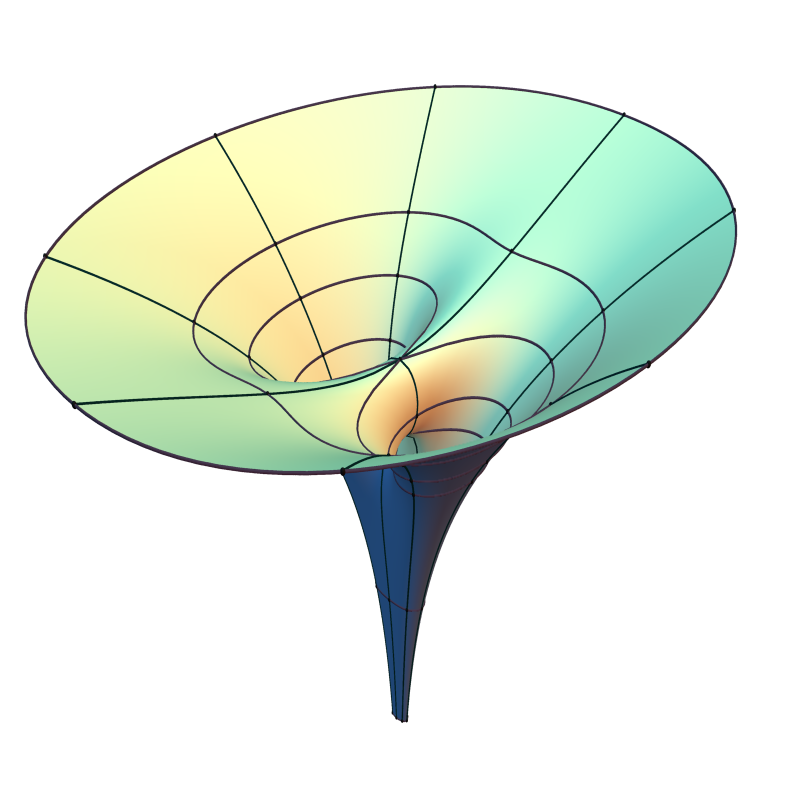}
	}
	\caption{Embedded Tori with ends $(0,0,1)$ and $(2,2,1)$}
	\label{figure:g1(1,2,2)+(0,0,1)}
\end{figure}

Using the torus given by 
\[
X=\{(z,w)\in\C^2 | w^2=z(z-2)(z-1/2)\}
\]
the 1-forms 
\[
\begin{split}
\left(\omega_1,\omega_2,\omega_3\right)&=\left(\frac{z}{w},\frac{iz}{w},\frac{1}{z}\right)dz\\
\end{split}
\]

have poles of order $(0,0,1)$ at $(z,w)=(0,0)$ and $(2,2,1)$ at $(z,w)=(\infty,\infty)$ and are holomorphic elsewhere.  The tentative parametrization
\[
f(z)=\re\int^z\left(\omega_1,\omega_2,\omega_3\right)
\]
may have some periods that need to be closed.  Without adjusting $a$, we can get a single valued parametrization by adding the holomorphic forms $\displaystyle\frac{\lambda_1}{w}$ and $\displaystyle\frac{i\lambda_2}{w}$ to $\omega_1$ and $\omega_2$, respectively.  The adjusted 1-forms are

\begin{equation}
\omega_1=\frac{z+\lambda_1}{w}dz,\hspace{.2in} \omega_2=i\frac{z+\lambda_2}{w},\hspace{.2in} \omega_3=\frac{1}{z}dz,\hspace{.2in} \Omega=(\omega_1,\omega_2,\omega_3).
\end{equation}

\begin{proposition}
There exist $\lambda_1,\lambda_2\in\R$ such that 
\begin{equation}
\displaystyle f(z)=\re\int^z\Omega
\label{eq51}
\end{equation}
is a proper harmonic embedding of $X$ into $\R^3$.
\end{proposition}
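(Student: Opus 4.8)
The plan is to carry out the four standard steps: close the remaining periods by choosing $\lambda_1,\lambda_2$, and then check that the resulting map is proper, regular, and injective. The first step uses up the two real parameters; injectivity is the main obstacle.

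\emph{Periods.} The curve $X$ carries the hyperelliptic involution $\eta(z,w)=(z,-w)$ and, since the branch points $0,\frac12,2,\infty$ are real, the anti-holomorphic involution $\tau(z,w)=(\bar z,\bar w)$; for real $\lambda_1,\lambda_2$ one checks $\eta^*\Omega=(-\omega_1,-\omega_2,\omega_3)$, $\tau^*\omega_1=\overline{\omega_1}$, $\tau^*\omega_2=-\overline{\omega_2}$, $\tau^*\omega_3=\overline{\omega_3}$. With $p_0=(0,0)$ and $p_\infty=(\infty,\infty)$, the group $H_1(X',\Z)$ has rank $3$ and is generated by a small loop $\gamma_0$ around $p_0$ together with lifts $\hat A,\hat B$ of a basis $A,B$ of $H_1(X,\Z)$, chosen (as one may for a real elliptic curve with two ovals) so that $\tau_*A=A$. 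The $\gamma_0$-periods are imaginary for free: $\omega_1,\omega_2$ are holomorphic at the branch point $p_0$, and $\int_{\gamma_0}\frac{dz}{z}=4\pi i$ since $dz/z$ has residue $2$ in a local coordinate at $p_0$. Every $\hat A$- and $\hat B$-period of $\omega_3=dz/z$ is imaginary, being $2\pi i$ times a winding number of $z$ about $0$. Using $\tau^*\omega_1=\overline{\omega_1}$, $\tau^*\omega_2=-\overline{\omega_2}$ and $\int_{\gamma_0}\omega_1=\int_{\gamma_0}\omega_2=0$ (so the ambiguity $\tau_*\hat A-\hat A\in\Z\gamma_0$ is harmless) one finds $\re\int_{\hat A}\omega_2=0$ automatically, and $\re\int_{\hat B}\omega_1=0$ automatically once $\re\int_{\hat A}\omega_1=0$ has been arranged. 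So only two genuine real conditions remain.

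\emph{Solving for $\lambda_1,\lambda_2$.} Write $\omega_1=z\,\frac{dz}{w}+\lambda_1\frac{dz}{w}$ and $\omega_2=i\bigl(z\,\frac{dz}{w}+\lambda_2\frac{dz}{w}\bigr)$. The condition $\re\int_{\hat A}\omega_1=0$ is $\int_{\hat A}z\,\frac{dz}{w}+\lambda_1\int_{\hat A}\frac{dz}{w}=0$, with both integrals real (the forms are $\tau$-real and $\hat A$ is $\tau$-invariant) and $\int_{\hat A}\frac{dz}{w}\neq0$, a nonzero real period of the holomorphic differential — concretely $\pm 2\int_0^{1/2}\frac{dz}{\sqrt{z(z-2)(z-\frac12)}}$, a finite nonzero number; this forces a unique real $\lambda_1\in(-\frac12,0)$. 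The condition $\re\int_{\hat B}\omega_2=0$ is $\im\int_{\hat B}z\,\frac{dz}{w}+\lambda_2\im\int_{\hat B}\frac{dz}{w}=0$, and $\im\int_{\hat B}\frac{dz}{w}\neq0$ because the period lattice of the holomorphic differential cannot lie in the real axis; this forces a unique real $\lambda_2$. With these values all periods of $\Omega$ over a generating set of $H_1(X',\Z)$ are imaginary, so $f=\re\int^z\Omega$ is a well-defined harmonic map of $X'$ into $\R^3$.

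\emph{Regularity, properness, embeddedness.} Properness is immediate: $f_3=\re\int\frac{dz}{z}=\log|z|$ up to a constant, so $f_3\to-\infty$ as $z\to p_0$ and $f_3\to+\infty$ as $z\to p_\infty$, and the $f$-preimage of a compact set lies in a compact annulus $\{c_1\le|z|\le c_2\}$ in $X'$. For regularity one uses $f_x\times f_y=\im\bigl(\omega_2\overline{\omega_3},-\omega_1\overline{\omega_3},\omega_1\overline{\omega_2}\bigr)$: its last component is $-\bigl((x+\lambda_1)(x+\lambda_2)+y^2\bigr)/|w|^2$ (with $z=x+iy$), which vanishes only on a short real arc, where one checks the first two components do not vanish simultaneously; at the finite branch points $z=\frac12,2$ one passes to $w$ as local coordinate, where $\frac{dz}{w}$ and hence $\omega_1,\omega_2$ are nonzero; and near $p_\infty$ (a catenoidal end of type $(2,2,1)$) and $p_0$ (an order-one end with leading pair $(2\lambda_1,2i\lambda_2)$) regularity — and local embeddedness — follow from the analysis of Section~\ref{sec:low} and Theorem~\ref{thm:order2}. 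The hard part is global injectivity. Here one exploits the symmetry group $\{1,\eta,\tau,\eta\tau\}$, acting on the image by the identity, the rotation $(x_1,x_2,x_3)\mapsto(-x_1,-x_2,x_3)$, and the reflections in the two vertical coordinate planes, together with the fact that $f_3=\log|z|$ is a group-invariant height whose level curves $\{|z|=\rho\}$ carry a residual dihedral symmetry. It then suffices to show that $f(\{|z|=\rho\})$ is an embedded planar curve for every $\rho$, which reduces to a single quarter-arc: for $\rho$ near $0$ or $\infty$ this is controlled by the end behaviour above, and for $\rho$ in the remaining compact range it is verified by explicit estimates on the parametrization (with $\lambda_1,\lambda_2$ now concrete numbers), supported by the computer-algebra and numerical computations on the companion website. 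Since the images of the fundamental domains can meet only along images of the fixed-point loci, which lie in the coordinate planes, global injectivity — hence that $f$ is a proper harmonic embedding — follows.
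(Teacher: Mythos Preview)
Your outline through properness is essentially correct and close to the paper's: the symmetry argument for which periods are automatically imaginary, and the intermediate-value-theorem solution for $\lambda_1$ (landing in $(-\tfrac12,0)$), match the paper's Lemma. Two small inaccuracies: the paper pins down $\lambda_2\in(-\tfrac54,-1)$ by a second IVT argument, whereas you only assert existence; and the zero set of the third normal component $-\bigl((x+\lambda_1)(x+\lambda_2)+y^2\bigr)/|w|^2$ is not ``a short real arc'' but a full circle $(x+\tfrac{\lambda_1+\lambda_2}{2})^2+y^2=(\tfrac{\lambda_1-\lambda_2}{2})^2$ in the $z$-plane, so that part of your regularity check is not quite right as written.

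The genuine gap is embeddedness. Your strategy --- slice by the height $f_3=\log|z|$, reduce by the dihedral symmetry to a quarter-arc, then verify that each level curve $f(\{|z|=\rho\})$ is embedded --- is a valid reduction in principle, but you do not carry it out: you defer the compact range of $\rho$ to ``explicit estimates \ldots\ supported by the computer-algebra and numerical computations on the companion website.'' That is not a proof. The paper takes a different, fully analytical route: it shows that the fundamental piece $X_1=\{\operatorname{Im} z>0\}$ (on one sheet) is a \emph{graph over the $x_2x_3$-plane}. Concretely, the first coordinate of the Gauss map is $\re\bigl((z+\lambda_2)\sqrt{z}/(\sqrt{z-2}\sqrt{z-\tfrac12})\bigr)$, and the paper proves this never vanishes for $\operatorname{Im} z>0$ by squaring, locating the zero set of the imaginary part explicitly (two branches $y_1,y_2$, only $y_2$ potentially real and only for $x\ge -\lambda_2$), and checking the real part is positive there. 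This reduces everything to one-variable calculus with explicit formulas in $\lambda_2$, rather than a family of planar-curve embeddedness checks over all $\rho$. The argument is completed by showing the boundary curves of the fundamental piece (the images of the real intervals) are embedded and lie in the correct symmetry planes, so the four reflected copies assemble without self-intersection.

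In short: replace your level-curve argument by the graph-over-a-plane argument via nonvanishing of $\re\bigl(\omega_2\overline{\omega_3}\bigr)$ on $\{\operatorname{Im} z>0\}$, which is what the paper actually does.
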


The following two lemmas provide the proof of this proposition.
\begin{lemma}
There exist $\lambda_1\in(-1/2,0)$ and $\displaystyle\lambda_2\in\left(-\frac{5}{4},-1\right)$ such that the periods produced in equation \ref{eq51} are all zero.
\label{lem51a}
\end{lemma}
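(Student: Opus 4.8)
The plan is to treat the two real periods that can fail to vanish as functions of the parameters $(\lambda_1,\lambda_2)$ and to find a zero by a continuity/intermediate-value argument. First I would identify which periods are genuinely at issue. The form $\omega_3 = dz/z$ has only real residues and its period over the two homology generators of the torus contributes nothing obstructive after taking real parts (the residue at $0$ is handled by the puncture, and the $a$-cycle period is purely imaginary), so the only periods to kill come from $\omega_1$ and $\omega_2$. On the hyperelliptic torus $w^2 = z(z-2)(z-1/2)$ there is essentially one homology class (beyond the one encircling the puncture) whose period must be real; write
\[
P_1(\lambda_1) = \re \oint_\gamma \frac{z+\lambda_1}{w}\, dz, \qquad P_2(\lambda_2) = \re \oint_\gamma \frac{i(z+\lambda_2)}{w}\, dz .
\]
Because $\oint_\gamma z/w\,dz$ and $\oint_\gamma dz/w$ are fixed complex numbers, each $P_k$ is an affine (degree one) real function of the corresponding $\lambda_k$; hence each has exactly one zero, provided the coefficient of $\lambda_k$ — namely $\re\oint_\gamma dz/w$ (resp. $\re\oint_\gamma i\,dz/w$) — is nonzero. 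That nondegeneracy is the one genuinely computational point.

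The key steps, in order, are: (1) choose the cycle $\gamma$ explicitly — e.g. a loop around the branch cut from $1/2$ to $2$ — and reduce each period to a real elliptic integral over the interval $[1/2,2]$ where $w$ is purely imaginary, so that $\re\oint_\gamma z^k/w\,dz$ becomes (up to sign) $2\int_{1/2}^{2} z^k/\sqrt{z(z-2)(1/2-z)}\,dz$, which is a strictly positive number for $k=0$ and for $k=1$; (2) from positivity of $\int dz/|w|$ conclude that $P_1$ is strictly monotone in $\lambda_1$, so it has a unique root $\lambda_1^\*$; (3) bound that root: evaluating the affine function at $\lambda_1 = -1/2$ and $\lambda_1 = 0$ and checking the signs (equivalently, comparing $\int_{1/2}^2 z/|w|\,dz$ against $\tfrac12\int_{1/2}^2 dz/|w|\,dz$ and against $0$) places $\lambda_1^\* \in (-1/2,0)$; (4) repeat verbatim for $P_2$, where the extra factor $i$ rotates the period by $90^\circ$ but the analysis is identical, and the sign comparisons at $\lambda_2 = -5/4$ and $\lambda_2 = -1$ pin $\lambda_2^\* \in (-5/4,-1)$.

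The main obstacle is step (3)/(4): verifying the sign inequalities that locate the roots in the stated intervals. This amounts to numerically (or via monotonicity of the integrand on $[1/2,2]$) comparing two explicit elliptic integrals, and while none of it is deep, getting the constants right is where the work lies; the companion website's Mathematica notebooks are presumably where these evaluations are carried out. A secondary, softer point is making sure no further period is hidden — i.e. that adding $\lambda_1/w$ and $i\lambda_2/w$ really suffices and one does not also need to adjust a period coming from the $w \mapsto -w$ sheet; this follows because the chosen $\gamma$ together with the puncture loop generates $H_1(X')$ modulo classes on which the integrand is already exact or purely imaginary, so two real parameters match the two real constraints. Once $\lambda_1^\*,\lambda_2^\*$ are fixed, all periods of $\Omega$ vanish and $f$ in \ref{eq51} is single-valued, which is exactly the claim.
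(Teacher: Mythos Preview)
Your approach has a genuine gap in the homology bookkeeping that makes step (2)--(3) collapse. A genus-one surface has $H_1$ of rank $2$, so (even modulo the puncture loops, whose residue conditions you correctly dispose of) there are \emph{two} independent cycles to check, not one. The paper takes $\gamma_1$ encircling the cut $[0,1/2]$ and $\gamma_2$ encircling $[1/2,2]$, and the crucial structural fact you are missing is that the four real periods decouple: on $[0,1/2]$ one has $w$ real, so $\re\int_{\gamma_1}\omega_2=0$ automatically while $\re\int_{\gamma_1}\omega_1$ is a nonconstant affine function of $\lambda_1$; on $[1/2,2]$ one has $w$ purely imaginary, so $\re\int_{\gamma_2}\omega_1=0$ automatically while $\re\int_{\gamma_2}\omega_2$ is a nonconstant affine function of $\lambda_2$.

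Your single cycle $\gamma$ around $[1/2,2]$ therefore gives $P_1(\lambda_1)\equiv 0$: since $w$ is purely imaginary there, $(z+\lambda_1)/w$ is purely imaginary and its real period vanishes identically. Your claim in step (1) that $\re\oint_\gamma z^k/w\,dz$ equals $2\int_{1/2}^{2} z^k/\sqrt{z(z-2)(1/2-z)}\,dz$ is off by a factor of $i$; that integral is $\im\oint_\gamma z^k/w\,dz$, not the real part. (This is exactly why your $P_2$, which carries the extra $i$, \emph{does} reduce correctly on this cycle.) So you never actually impose any condition on $\lambda_1$, and you never check the other generator $\gamma_1$ where the genuine $\lambda_1$-constraint lives. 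The sign checks you propose for $\lambda_1$ at the endpoints $-1/2$ and $0$ must be carried out over $[0,1/2]$, not $[1/2,2]$; the paper does precisely this, reducing the endpoint values to integrals whose signs are read off directly from the integrand, and then invoking the intermediate value theorem separately for $\lambda_1\in(-1/2,0)$ on $\gamma_1$ and for $\lambda_2\in(-5/4,-1)$ on $\gamma_2$.
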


\begin{proof}
Let $\gamma_1$ be the circle with radius $1/2$ and center $(1/4,0)$ and $\gamma_2$ be the circle with radius $1$ and center $5/4$.  Denote the lifts of $\gamma_1$ and $\gamma_2$ to $\overline{X}$ with the same notation.  They form a basis of $H_1(X)$.

Observe that $\omega_3$ has no real periods on $X$.  In order to compute the periods of $\omega_1$ and $\omega_2$ on $\gamma_1$ and $\gamma_2$, collapse $\gamma_1$ to the interval $(0,1/2)$ and $\gamma_2$ to the interval $(1/2,2)$.  Then $\displaystyle\re\int_{\gamma_1}\omega_2=\re\int_{\gamma_2}\omega_1=0$ for all $\gamma_1,\gamma_2\in\R$.  

If $\lambda_1=0$ then 
\[
\re\int_{\gamma_1}\omega_1=2\re\int_0^{1/2}\frac{z}{\sqrt{z}\sqrt{z-2}\sqrt{z-1/2}}dz=-2\int_0^{1/2}\frac{z}{\sqrt{z}\sqrt{2-z}\sqrt{1/2-z}}dz\leq 0
\]
If $\lambda_1=-1/2$ then
\[
\re\int_{\gamma_1}\omega_1=2\re\int_0^{1/2}\frac{z-1/2}{\sqrt{z}\sqrt{z-2}\sqrt{z-1/2}}dz=2\int_0^{1/2}\frac{1/2-z}{\sqrt{z}\sqrt{2-z}\sqrt{1/2-z}}dz\geq 0
\]
By the intermediate value theorem there exists $\lambda_1\in(-1/2,0)$ such that $\displaystyle\re\int_{\gamma_1}\omega_1=0$.

If $\lambda_2=-1$ then
\[
\begin{split}
\re\int_{\gamma_2}\omega_2&=2\re\int_{1/2}^{2}\frac{i(z-1)}{\sqrt{z}\sqrt{z-2}\sqrt{z-1/2}}dz\\
&=2\re\int_{1/2}^{2}\frac{z-1}{\sqrt{z}\sqrt{2-z}\sqrt{z-1/2}}dz\\
&=2\re\int_{1/2}^{1}\frac{z-1}{\sqrt{z}\sqrt{2-z}\sqrt{z-1/2}}dz+2\re\int_1^{2}\frac{z-1}{\sqrt{z}\sqrt{2-z}\sqrt{z-1/2}}dz\\
&=2\re\int_{1/2}^{1}\frac{z-1}{\sqrt{z}\sqrt{2-z}\sqrt{z-1/2}}dz-2\re\int_{1/2}^{1}\frac{z-1}{z^{3/2}\sqrt{2-z}\sqrt{z-1/2}}dz\\
&=2\re\int_{1/2}^{1}\frac{(z-1)^2}{z^{3/2}\sqrt{2-z}\sqrt{z-1/2}}dz\\
&\geq 0\\
\end{split}
\]

If $\lambda_2=-\frac{5}{4}$ then
\[
\begin{split}
\re\int_{\gamma_2}\omega_2&=2\re\int_{1/2}^{2}\frac{i\left(z-\frac{5}{4}\right)}{\sqrt{z}\sqrt{z-2}\sqrt{z-1/2}}dz\\
&=2\re\int_{1/2}^{2}\frac{\left(z-\frac{5}{4}\right)}{\sqrt{z}\sqrt{2-z}\sqrt{z-1/2}}dz\\
&=2\re\int_{1/2}^{5/4}\frac{\left(z-\frac{5}{4}\right)}{\sqrt{z}\sqrt{2-z}\sqrt{z-1/2}}dz+2\re\int_{5/4}^{2}\frac{\left(z-\frac{5}{4}\right)}{\sqrt{z}\sqrt{2-z}\sqrt{z-1/2}}dz\\
&=2\re\int_{1/2}^{5/4}\frac{\left(z-\frac{5}{4}\right)}{\sqrt{z}\sqrt{2-z}\sqrt{z-1/2}}dz+2\re\int_{1/2}^{5/4}\frac{\left(1-\frac{5}{4}z\right)}{z^{3/2}\sqrt{2-z}\sqrt{z-1/2}}dz\\
&=2\re\int_{1/2}^{4/5}\frac{z^2-\frac{5}{2}z+1}{z^{3/2}\sqrt{2-z}\sqrt{z-1/2}}dz+2\re\int_{4/5}^{5/4}\frac{z-5/4}{z^{3/2}\sqrt{2-z}\sqrt{z-1/2}}dz\\
&\leq 0\\
\end{split}
\]
By the intermediate value theorem there exists $\lambda_2\in\left(-2,-\frac{5}{4}\right)$ such that $\displaystyle\re\int_{\gamma_2}\omega_2=0$.
\end{proof}

\begin{lemma}
For $\lambda_1$ and $\lambda_2$ as given in Lemma \ref{lem51a}, the map $\displaystyle f(z)=\re\int^z\Omega$ is properly embedded.
\end{lemma}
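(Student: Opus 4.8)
The plan is to verify the three requirements — properness, regularity, and injectivity — in that order, organizing everything around the height function $f_3(p)=\re\int^p\omega_3=\log|z(p)|$ and the two standard involutions of the hyperelliptic curve $X$. Properness is immediate: since $\omega_3=dz/z$, the coordinate $f_3=\log|z|$ is single valued on $X$ and satisfies $f_3\to-\infty$ at the end $(0,0)$ and $f_3\to+\infty$ at the end $(\infty,\infty)$, so for any compact $K\subset\R^3$ the preimage $f^{-1}(K)$ lies in a set $\{p\in X:e^{-M}\le|z(p)|\le e^M\}$, which is compact in $X$ and avoids both punctures; hence $f^{-1}(K)$ is compact. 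Nothing about $\lambda_1,\lambda_2$ is needed here.

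\emph{Regularity.} I would work with the globally defined meromorphic functions $A:=\omega_1/\omega_3=z(z+\lambda_1)/w$ and $C:=\omega_2/\omega_3=iz(z+\lambda_2)/w$ on $X$. By the cross‑product formula recalled earlier, away from the branch points $f_x\times f_y=0$ exactly when $A\in\R$ and $C\in\R$. Squaring, $A\in\R$ forces $z(z+\lambda_1)^2/((z-2)(z-1/2))\in[0,\infty)$, and the pair of conditions forces $z$ onto the circle $\mathcal C:\ (x+\lambda_1)(x+\lambda_2)+y^2=0$ (with $z=x+iy$); after the easy separate check at the two points $z=-\lambda_1,-\lambda_2$ it then remains to see that along $\mathcal C$ the quantity $z(z+\lambda_1)^2/((z-2)(z-1/2))$ never lies in $[0,\infty)$. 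For $\lambda_1\in(-1/2,0)$ and $\lambda_2\in(-5/4,-1)$ as supplied by Lemma \ref{lem51a} this is an elementary sign analysis, which I would confirm by a computer algebra computation as on the companion website. At the finite branch points $z=1/2,2$ one passes to the local coordinate $w$: there $\omega_3$ vanishes simply, $f_x\times f_y$ reduces to $(0,0,\im(\omega_1\overline{\omega_2}))$, and $\im(\omega_1\overline{\omega_2})$ is a negative multiple of $(\tfrac12+\lambda_1)(\tfrac12+\lambda_2)$, respectively $(2+\lambda_1)(2+\lambda_2)$, which are nonzero for the stated ranges (the first product negative, the second positive). Thus $f$ is a regular immersion of $X$ minus the two punctures.

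\emph{Injectivity.} Since $f^{-1}(\{x_3=c\})$ is the lift to $X$ of the circle $\{|z|=e^c\}$ and the planes $\{x_3=c\}$ are disjoint, it suffices to prove $f$ is one‑to‑one on each level set. The symmetries do the bookkeeping: $(z,w)\mapsto(\bar z,\bar w)$ induces the reflection $(x_1,x_2,x_3)\mapsto(x_1,-x_2,x_3)$, the hyperelliptic involution $(z,w)\mapsto(z,-w)$ induces the half‑turn $(x_1,x_2,x_3)\mapsto(-x_1,-x_2,x_3)$, and together these generate a $(\Z/2)^2$‑action fixing the $x_3$‑axis. For $e^c<1/2$ or $e^c>2$ the level set is a single circle double‑covering $\{|z|=e^c\}$, equivariant under the induced antipodal map on $\{x_3=c\}$; for $e^c\in(1/2,2)$ it is two disjoint circles (the annular region carrying the extra handle), interchanged by the hyperelliptic involution, so that $f$ sends them to a plane curve $\gamma$ and its point reflection $-\gamma$. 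In all cases injectivity on the level set reduces to: (i) the curve $t\mapsto(f_1,f_2)$ on $\{|z|=e^c\}$ is simple, and (ii) it misses $-\gamma$. I would prove (i) from the tangent $(\dot f_1,\dot f_2)=(-\im(A),-\im(C))$, which never vanishes by the regularity step, by showing its argument is monotone with total variation $2\pi$, so the level curve is convex and hence embedded; this reduces to the sign of an explicit trigonometric polynomial in $t$. I would prove (ii) from the reflectional symmetry together with the fact that over the relevant range of $t$ the curve stays strictly on one side of a mirror line $\{x_1=0\}$ or $\{x_2=0\}$ in $\{x_3=c\}$, for which one uses the interval bounds on $\lambda_1,\lambda_2$ in the handle region $1/2<|z|<2$. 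The critical levels $e^c=1/2,2$ follow by continuity, and near the punctures the level curves are essentially circular because in a uniformizer at the puncture $\int\omega_1$ and $\int\omega_2$ are linear to leading order.

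\emph{Main obstacle.} The substantive difficulty is part (ii): excluding that $f$ identifies a point on one sheet of the handle with a point on the other (equivalently, in the single‑circle regime, a point with its antipode). Because $f_1,f_2$ are elliptic integrals with no elementary antiderivative, this cannot be settled by an explicit formula the way the genus‑zero surfaces of Section \ref{sec:embedded} are; the argument must combine the $(\Z/2)^2$‑symmetry, monotonicity of the level‑curve tangent, and the specific bounds $\lambda_1\in(-1/2,0)$, $\lambda_2\in(-5/4,-1)$ from Lemma \ref{lem51a}, with the remaining inequalities verified numerically.
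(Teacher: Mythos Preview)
Your plan differs substantially from the paper's argument, and the part you yourself flag as the main obstacle is a genuine gap.

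The paper does \emph{not} analyze the horizontal level curves $\{f_3=c\}$. Instead it uses the same $(\Z/2)^2$ symmetry group you identify to cut the surface into four congruent pieces, takes the fundamental quarter $X_1$ lying over $\{\im z>0\}$, and shows directly that $f|_{X_1}$ is a \emph{graph over the $x_2x_3$-plane}. Concretely, it proves that the first component of the normal, $\im(\omega_2\overline{\omega_3})$, which up to a positive factor equals $\re\bigl((z+\lambda_2)\sqrt{z}/(\sqrt{z-2}\,\sqrt{z-1/2})\bigr)$, never vanishes for $\im z>0$; this is done by locating exactly where its square can be a non-positive real, an explicit computation in $x,y,\lambda_2$ using only the range $\lambda_2\in(-5/4,-1)$. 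Once $X_1$ is a graph, regularity and injectivity of that piece are automatic, and a short boundary argument (the boundary lies in the two reflection planes, and $f_3=\log|z|$ is monotone along each real-axis arc) shows the four pieces fit together without overlap. This single inequality on one component of the normal replaces both your regularity analysis on the circle $\mathcal C$ and the entire level-curve program.

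The gap in your approach is precisely where you locate it. You have not proved that the argument of $(\dot f_1,\dot f_2)$ is monotone along a level circle --- in the single-circle regime this curve double-covers $\{|z|=e^c\}$, its tangent is governed by elliptic integrands, and there is no evident sign --- and even granting convexity, the half-turn symmetry alone does not prevent $\gamma$ and $-\gamma$ from meeting. The ``stays on one side of a mirror line'' criterion you propose for (ii) is exactly what the paper's graph argument delivers for free: once $X_1$ projects injectively to the $x_2x_3$-plane and is bounded by the symmetry plane of the reflection $(x_1,x_2,x_3)\mapsto(-x_1,x_2,x_3)$, the other quarters automatically lie on the other side. Your outline could in principle be completed, but as written it defers the substantive estimate to numerics rather than supplying it.
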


\begin{figure}[h]
	\centerline{ 
		\includegraphics[width=2.5in]{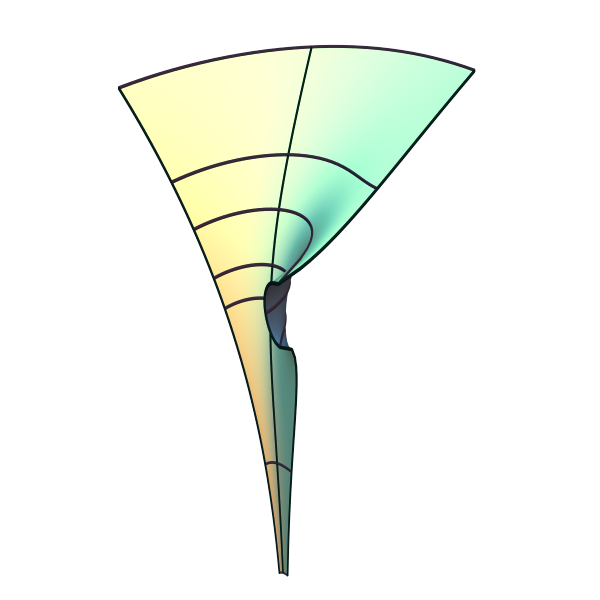}
	}
	\caption{One fourth of an embedded torus with ends of type $(0,0,1)$ and $(2,2,1)$}
	\label{figure:g1(2,2,1)+(0,0,1)b}
\end{figure}

\begin{proof}
We will show that a fundamental piece of the surface over the domain $X_1=\{(w,z)|\im z>0, w>0\}$ is a graph over the $x_2x_3$-plane.  The numerator of the first coordinate of the Gauss map is given by
\[
\re\left(\frac{(z+\lambda_2)\sqrt{z}}{\sqrt{z-2}\sqrt{z-1/2}}\right)
\]
This equals zero when 
\begin{equation}
\left(\frac{(z+\lambda_2)\sqrt{z}}{\sqrt{z-2}\sqrt{z-1/2}}\right)^2
\label{eq5.1}
\end{equation}
is a non-positive real number.  We will show this never happens when $\im z>0$.  Let $z=x+iy$.  The imaginary part of equation \ref{eq5.1} is given by 
\[
\frac{1}{3}y\left(3-\frac{4(2+\lambda_2)^2}{(x-2)^2+y^2}+\frac{(a+2\lambda_2)^2}{(1-2x)^2+4y^2}\right)
\]
which equals zero when $y=0$,
\[
y=y_1=\frac{\sqrt{1+5\lambda_2+\lambda_2^2+(5-2x)x-\sqrt{(1+3\lambda_2+\lambda_2^2+x)(1+7\lambda_2+\lambda_2^2+9x)}}}{\sqrt{2}}
\]
or 
\[
y=y_2=\frac{\sqrt{1+5\lambda_2+\lambda_2^2+(5-2x)x+\sqrt{(1+3\lambda_2+\lambda_2^2+x)(1+7\lambda_2+\lambda_2^2+9x)}}}{\sqrt{2}}
\]
We disregard $y=0$ because we are only concerned about $y>0$.  Simple calculus computations show that $y_1\notin\R$ and $y_2\notin\R$ when $x<-\lambda_2$.  Hence, we examine equation \ref{eq5.1} when $x\geq\lambda_2$ and $y=y_2$.  When $y=y_2$, the real part of equation \ref{eq5.1} is 

\[
\frac{-1+\lambda_2^2+\lambda_2(-5+8x)+x(-5+8x)-\sqrt{(1+3\lambda_2+\lambda_2^2+x)(1+7\lambda_2+\lambda_2^2+x)}}{4x-5}
\]

Again, simple calculus computations show this is positive when $x\geq\lambda_2$.  Hence, if $\im z>0$ then
\[
\left(\frac{(z+\lambda_2)\sqrt{z}}{\sqrt{z-2}\sqrt{z-1/2}}\right)^2
\]
is never a non-positive real number.

Let $\tau(z,w)=\left(\overline{z},-\overline{w}\right)$ and $\sigma(z,w)=\left(\overline{z},\overline{w}\right)$.  Then, $\tau^*\left(\omega_1,\omega_2\omega_3\right)=\left(-\overline{\omega_1},\overline{\omega_2},\overline{\omega_3}\right)$ and $\sigma^*\left(\omega_1,\omega_2,\omega_3\right)=\left(\overline{\omega_1},-\overline{\omega_2},\overline{\omega_3}\right)$.  Thus, $\tau$ induces a reflection in a plane parallel to the $x_2x_3$-plane and $\sigma$ induces a reflection in a plane parallel to the $x_1x_3$-plane, and $f(X_1)$ is bounded by these planes.  On the $z$ plane, the fixed point sets of $\tau$ and $\sigma$ are the real intervals $(-\infty,0)\cup[1/2,2]$ and $(0,1/2]\cup[2,\infty)$, respectively.  The fixed-point sets of these reflections are the same as the images of the fixed point sets of $\tau$ and $\sigma$.

Lastly, we need to show that the boundary of $f(X_1)$ is embedded.  The boundary of $f(X_1)$ consists of two curves: $f((-\infty,0))$ and $f((0,\infty))$.  As $f_3(z)=\log{|z|}$, $f$ is one-to-one on each of these curves.  The pieces $f((-\infty,0))$ and $f([1/2,2])$ lie in the $x_2x_3$-plane, whereas $f((0,1/2))$ and $f((2,\infty))$ do not.  Thus, the only chance for an intersection along the boundary would be at $x$ and $-x$, with $1/2\leq x\leq 2$.  The first coordinate of the Gauss map will be zero at $x$ and $-x$, the second coordinate of the Gauss map is negative at $x$ and positive at $-x$.  Consider the curve $\gamma_x(t)=xe^{\pi i t}$ for $0\leq t\leq1$.  Note that $f\circ\gamma_x$ is a level curve with height $\log{x}$.  The first coordinate of the Gauss map is positive along $\gamma_x$ for $0<t<1$, and so $f_2$ is one-to-one along $\gamma_x$.  Hence, $f(-x)\neq f(x)$, and the boundary of $f(X_1)$ is embedded.  Therefore, $f(X)$ is embedded.

As the third coordinate function is $f_3(z)=\log{|z|}$, $f$ is proper.  
\end{proof}

\subsection{Tori with a single end of type \texorpdfstring{$(2,2,3)$}{(2,2,3)}}
\label{sec:surf223}

The embedded end of type $(2,2,3)$ is a genus 0 surface just by itself.
At  first glance it has a similar appearance as the minimal Enneper end which is of type $(3,4,4)$ (but not embedded). 
Many of the constructions that are known for Enneper's end can be carried out (with less effort) for the end of type $(2,2,3)$.
For instance, there are
higher symmetry cousins of type $(2,2,n)$ and  (now embedded) higher genus harmonic surfaces..

\begin{figure}[h]
	\centerline{ 
		\includegraphics[width=2.5in]{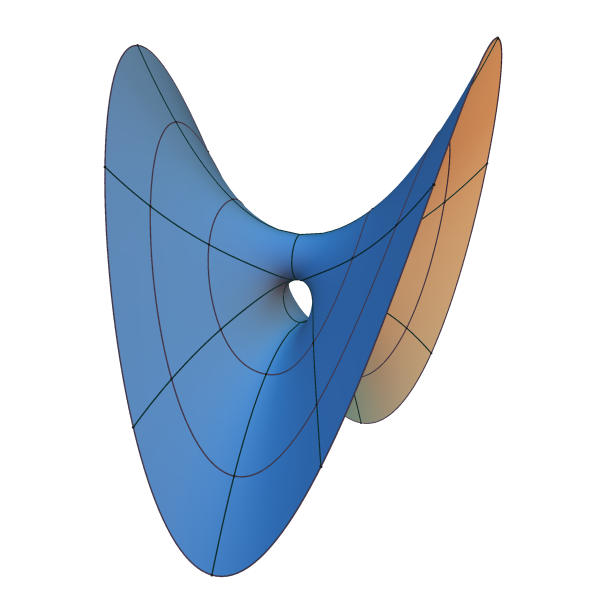}
		\includegraphics[width=2.5in]{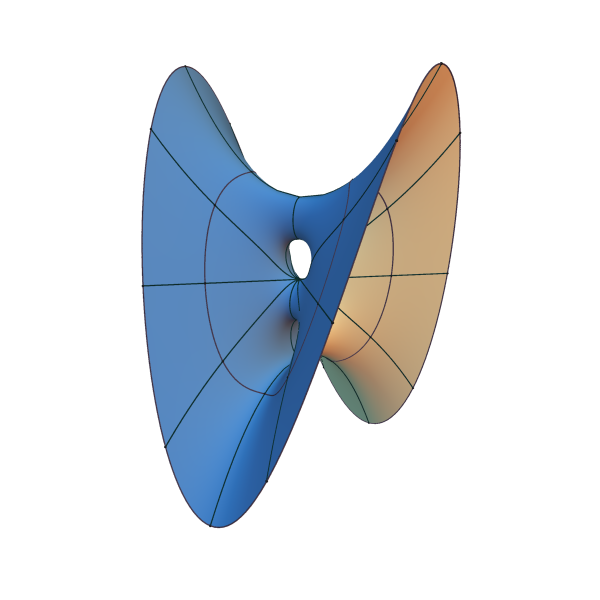}
	}
	\caption{An embedded  end of type $(2,2,3)$ for genus 1 and 2}
	\label{figure:embenneper}
\end{figure}

\begin{theorem}
There exists a complete, properly embedded harmonic  torus with one end of type $(2,2,3)$. 
\end{theorem}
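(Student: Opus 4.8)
The plan is to mimic the construction in Section~\ref{sec:surf(0,0,1)(2,2,1)}: start from the genus~0 end of type $(2,2,3)$, realize it on a suitable hyperelliptic torus, and force-close the periods by adding holomorphic $1$-forms to $\omega_1$ and $\omega_2$ while leaving $\omega_3$ (which is known to have no real periods) untouched. Concretely, take $X=\{(z,w)\in\C^2\mid w^2=z(z-1)(z-a)\}$ for a real parameter $a$ to be fixed, place the single end at one branch point or at $\infty$, and choose
\[
\omega_1=\frac{p(z)}{w}\,dz,\qquad \omega_2=i\,\frac{q(z)}{w}\,dz,\qquad \omega_3=(\text{the $(2,2,3)$ height differential}),
\]
where $p,q$ are polynomials whose degree is dictated by requiring the prescribed pole order $3$ at the end and holomorphicity elsewhere, with undetermined real coefficients $\lambda_1,\lambda_2$ playing the role of the free parameters. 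The first task is to pin down the correct divisor data so that $(\omega_1,\omega_2,\omega_3)$ really has an end of type $(2,2,3)$ there and no other poles; this is a bookkeeping exercise on the hyperelliptic curve.

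Next I would kill the periods. On a torus $H_1(X)$ has rank~2, so there are two real period conditions (the three coordinate differentials contribute period vectors in $\R^3$, but reflection symmetries will force most components to vanish automatically). I would impose the two reflections $\tau(z,w)=(\bar z,-\bar w)$ and $\sigma(z,w)=(\bar z,\bar w)$ as in the earlier lemma; together with the reality of the residues these symmetries cut the period problem down to exactly two real equations in the two real unknowns $\lambda_1,\lambda_2$. Then I would evaluate these periods as real integrals over intervals between branch points and run the same intermediate-value argument as in Lemma~\ref{lem51a}: show that at one end of a parameter interval the period is $\le 0$ and at the other end it is $\ge 0$, using the trick of symmetrizing the integrand via $z\mapsto a/z$ (or a similar involution) to write the difference as an integral of a manifestly signed quantity. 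This produces $\lambda_1,\lambda_2$ (and possibly a constraint fixing $a$) making $f=\re\int^z\Omega$ single-valued.

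Finally I would prove regularity and embeddedness of the resulting map, again following the template of the $(0,0,1)$--$(2,2,1)$ torus: restrict to a fundamental domain $X_1$ cut out by the fixed-point sets of $\tau$ and $\sigma$, show that on $X_1$ the relevant coordinate of the Gauss map $f_x\times f_y=\im(\omega_2\overline{\omega_3},-\omega_1\overline{\omega_3},\omega_1\overline{\omega_2})$ never vanishes, so that $f(X_1)$ is a graph over a coordinate plane, hence regular and embedded; then check that the boundary arcs (images of the fixed-point intervals) are embedded and that adjacent copies glued by the reflections do not collide, so the full surface is embedded. Properness follows because the end is of type $(2,2,3)$, which Proposition~\ref{thm:22n} guarantees is properly embedded, and away from the end $X$ is compact. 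I expect the main obstacle to be the embeddedness of the fundamental piece and especially the non-collision of the reflected copies near the end: unlike the minimal case there is no strong maximum principle, so this has to be verified by an explicit (likely computer-algebra-assisted) sign analysis of the Gauss map components on $X_1$, exactly the kind of tedious-but-routine computation that replaces the period problem in this harmonic setting.
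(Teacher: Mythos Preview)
Your proposal is correct and follows essentially the same route as the paper: the paper takes the specific symmetric torus $w^2=z(z-1)(z+1)$ (i.e.\ $a=-1$), sets $\omega_3=dz$, $\omega_1=\frac{z+\lambda_1}{w}\,dz$, $\omega_2=i\frac{z+\lambda_2}{w}\,dz$, uses exactly the two reflections you name to reduce the period problem to two IVT arguments (with the extra symmetry $z\mapsto -z$ forcing $\lambda_2=-\lambda_1$), and then proves regularity and embeddedness by showing the fundamental piece over the upper half $z$-plane is a graph over the $x_1x_3$-plane because the $x_2$-component of $f_x\times f_y$ does not vanish there. The only refinement worth noting is that fixing $a=-1$ from the start is what makes the period and Gauss-map sign computations tractable by hand rather than requiring computer algebra.
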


\begin{remark}
In contrast, by work of \cite{tha3,sa1,ww1}, there are complete, non-embedded minimal surfaces of any genus with a single Enneper end.
\end{remark}

The embedded sphere with one end of type $(2,2,3)$ - the hyperbolic paraboloid - has Weierstrass representation given by 
\[
(\omega_1,\omega_2,\omega_3)=(1,i,z)dz
\]
with the end placed at infinity.  We can use this as our model for creating an embedded torus with one end of type (2,2,3).  Using the torus given by 
\[
X=\{(z,w)\in\C^2 | w^2=z(z-1)(z+1)\}
\]
the 1-forms 
\[
\begin{split}
\left(\omega_1,\omega_2,\omega_3\right)&=\left(\frac{1}{\sqrt{z}\sqrt{z-1}\sqrt{z+1}},\frac{i}{\sqrt{z}\sqrt{z-1}\sqrt{z+1}},1\right)dz\\
&=\left(\frac{z}{w},\frac{iz}{w},1\right)dz\\
\end{split}
\]

have poles of order (2,2,3) at $(\infty,\infty)$ and are holomorphic elsewhere.  We close the periods of
\[
f(z)=\re\int^z\left(\omega_1,\omega_2,\omega_3\right)
\]
using the  holomorphically adjusted 1-forms 

\begin{equation}
\omega_1=\frac{z+\lambda_1}{w}dz,\hspace{.2in} \omega_2=i\frac{z+\lambda_2}{w},\hspace{.2in} \omega_3=dz,\hspace{.2in} \Omega=(\omega_1,\omega_2,\omega_3).
\end{equation}

\begin{proposition}
There exist $\lambda_1,\lambda_2\in\R$ such that 
\begin{equation}
\displaystyle f(z)=\re\int^z\Omega
\label{eq2}
\end{equation}
is a proper harmonic embedding of $X$ into $\R^3$.
\end{proposition}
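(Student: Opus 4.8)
The plan is to mirror the torus construction of Section~\ref{sec:surf(0,0,1)(2,2,1)}: first choose $\lambda_1,\lambda_2\in\R$ so that $f$ is single valued, and then establish regularity, embeddedness, and properness using the two obvious antiholomorphic symmetries of $X$ together with a fundamental domain.

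\emph{Closing the periods.} Take as a basis of $H_1(X)$ the lifts $\gamma_1,\gamma_2$ of small loops about the branch intervals $[-1,0]$ and $[0,1]$, and collapse them onto those intervals. On $(-1,0)$ the product $z(z-1)(z+1)$ is positive, so $w$ is real there, while on $(0,1)$ it is negative, so $w$ is imaginary there. Since $\omega_3=dz$ is exact it contributes no periods, and because of the real, resp.\ imaginary, nature of $w$ one gets $\re\int_{\gamma_1}\omega_2=\re\int_{\gamma_2}\omega_1=0$ identically in $\lambda_1,\lambda_2$. The two remaining real period conditions therefore decouple completely:
\[
\re\!\int_{\gamma_1}\!\omega_1 = 2\!\int_{-1}^{0}\!\frac{z+\lambda_1}{w}\,dz,\qquad \re\!\int_{\gamma_2}\!\omega_2 = \pm 2\!\int_{0}^{1}\!\frac{z+\lambda_2}{\sqrt{|z(z-1)(z+1)|}}\,dz,
\]
and each is an affine function of the corresponding $\lambda_i$ whose slope (the integral of $dz/w$, resp.\ of $dz/\sqrt{|\cdots|}$) is a nonzero real because the integrand has constant sign. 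Hence there are unique roots $\lambda_1,\lambda_2$, and comparing the two terms in each expression one sees that $\lambda_1$ is a weighted average of $-z$ over $(-1,0)$ and $\lambda_2$ one of $-z$ over $(0,1)$, so $\lambda_1\in(0,1)$ and $\lambda_2\in(-1,0)$. This range is needed in the embeddedness argument.

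\emph{Regularity and embeddedness.} Let $\tau(z,w)=(\bar z,-\bar w)$ and $\sigma(z,w)=(\bar z,\bar w)$; these are commuting antiholomorphic involutions of $X$ with $\tau^*(\omega_1,\omega_2,\omega_3)=(-\overline{\omega_1},\overline{\omega_2},\overline{\omega_3})$ and $\sigma^*(\omega_1,\omega_2,\omega_3)=(\overline{\omega_1},-\overline{\omega_2},\overline{\omega_3})$, so $f$ is symmetric under reflections in planes parallel to the $x_2x_3$- and $x_1x_3$-planes, and it suffices to study a fundamental domain $X_1$ for $\langle\tau,\sigma\rangle$ as in Section~\ref{sec:surf(0,0,1)(2,2,1)}, essentially the sheet of $X$ over the upper half $z$-plane. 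Using $f_x\times f_y=\im(\omega_2\overline{\omega_3},-\omega_1\overline{\omega_3},\omega_1\overline{\omega_2})$ and $\omega_3=dz$, the first coordinate of the normal is a nonzero multiple of $\re\!\big(\tfrac{z+\lambda_2}{w}\big)$, so $f(X_1)$ is a graph over the $x_2x_3$-plane precisely when
\[
\left(\frac{z+\lambda_2}{w}\right)^{2}=\frac{(z+\lambda_2)^{2}}{z(z-1)(z+1)}
\]
is never a non-positive real for $\im z>0$; this is checked exactly as in Section~\ref{sec:surf(0,0,1)(2,2,1)} by computing the imaginary part of this rational function, solving for its zero locus in the upper half plane, and verifying that the real part is positive along that locus for $\lambda_2\in(-1,0)$ (a calculus computation, with computer algebra if convenient). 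The boundary of $f(X_1)$ is a union of images of real intervals lying alternately in the two reflection planes; since $f_3=\re z$ is strictly monotone along each such interval, $f$ is injective on each boundary arc, the possible boundary identifications are ruled out by tracking the sign of the Gauss-map coordinates along the arcs $z=xe^{i\pi t}$ as in Section~\ref{sec:surf(0,0,1)(2,2,1)}, and regularity along the boundary holds because at each such point one of the three normal coordinates is nonzero (using $\lambda_1>0>\lambda_2$). This gives that $f(X)$ is a regular embedding.

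\emph{Properness and the main difficulty.} Properness is immediate: in the local parameter $u=z^{-1/2}$ at the puncture one computes $\int^z\omega_1=2/u+O(|u|)$ and $\int^z\omega_2=2i/u+O(|u|)$, so $(f_1,f_2)\to\infty$ as $u\to 0$, whence $f^{-1}$ of any compact set is bounded away from the puncture. The one genuinely delicate step is the graph claim for $X_1$, i.e.\ that $(z+\lambda_2)^2/\big(z(z-1)(z+1)\big)$ avoids $(-\infty,0]$ on the upper half plane; as in the $-2\pi$-handle construction this requires a somewhat careful sign analysis exploiting $\lambda_2\in(-1,0)$, while the period calculation, the symmetry reductions, the boundary matching, and properness are all routine.
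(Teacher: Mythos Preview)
Your outline is correct and follows the same skeleton the paper uses --- close periods, exploit the two reflectional symmetries, show a fundamental sheet is a graph, check the boundary, and read off properness --- but several of your choices differ from what the paper does and make your argument heavier than it needs to be.

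\textbf{Periods.} You close the two periods by linearity; the paper uses the intermediate value theorem on $\lambda_1\in(0,1)$ and then observes, via the extra symmetry $z\mapsto -z$ of the branch locus, that $\lambda_2=-\lambda_1$ closes the remaining period. This is worth recording: it makes the later sign checks cleaner.

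\textbf{Projection direction.} You model the graph step on Section~\ref{sec:surf(0,0,1)(2,2,1)} and project to the $x_2x_3$-plane, which forces you into the ``$\big(\tfrac{z+\lambda_2}{w}\big)^2$ avoids $(-\infty,0]$'' analysis. The paper instead projects to the $x_1x_3$-plane: since $\omega_3=dz$, one has $f_3=\re z$, so the boundary $f(\R)$ projects injectively to the $x_1x_3$-plane in one line; on the interior the relevant normal component is $\im(-\omega_1\overline{\omega_3})=-\im\tfrac{z+\lambda_1}{w}$, and the paper dispatches this with a short argument (whose logic really comes down to $(z+\lambda_1)^2/(z^3-z)$ not being a non-negative real for $\im z>0$, the mirror of your claim). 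So the content is the same, but the $x_1x_3$-choice keeps everything tied to the trivially monotone coordinate $f_3$.

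\textbf{Regularity at the Weierstrass points.} This is the one place where your write-up has an actual gap. The forms $\omega_1,\omega_2$ are written as $(\cdot)\,dz/w$, and at $z=0,\pm1$ you must switch to the local coordinate $w$; the paper has a separate immersion lemma which does exactly this, computing $\im(\omega_1\overline{\omega_2})$ at the branch points in the $dw$-frame and using $0<\lambda_1<1$. Your sentence ``regularity along the boundary holds because at each such point one of the three normal coordinates is nonzero'' does not cover this, since the normal coordinates as you wrote them are not even defined at $w=0$ without the coordinate change.

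In short: the strategy is right, but (i) note $\lambda_2=-\lambda_1$, (ii) project to $x_1x_3$ rather than $x_2x_3$ --- the argument is shorter and you should not need the full Section~\ref{sec:surf(0,0,1)(2,2,1)} machinery here --- and (iii) add the branch-point regularity check in the $w$-coordinate.
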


\begin{lemma}
There exist $\lambda_1,\lambda_2\in\R$ such that the periods produced in \ref{eq2} are all zero.
\end{lemma}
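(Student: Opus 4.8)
The plan is to repeat, on the curve $w^2=z(z-1)(z+1)$, the period-closing argument used for Lemma~\ref{lem51a}. First I would fix a homology basis adapted to the branch locus $\{-1,0,1,\infty\}$: let $\gamma_1$ be the lift to $X$ of a small loop in $\C$ encircling the slit $[-1,0]$, and $\gamma_2$ the lift of a loop encircling $[0,1]$. For this genus-one hyperelliptic curve these represent a basis of $H_1(X;\Z)$. Next I would record that all three forms have vanishing residues, so that their periods are well defined and the only obstruction to $f=\re\int^z\Omega$ being single valued is the vanishing of the \emph{real} parts of the $\gamma_1$- and $\gamma_2$-periods of $\omega_1,\omega_2,\omega_3$: indeed $\omega_1=\tfrac{z+\lambda_1}{w}\,dz$ and $\omega_2=i\tfrac{z+\lambda_2}{w}\,dz$ extend holomorphically across $z=0,\pm 1$ (there $dz/w$ is regular in the local uniformizer) and have a double pole with zero residue at the point $p_\infty$ over $\infty$, while $\omega_3=dz$ has a triple pole with zero residue there.

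For $\omega_3$ the real periods vanish trivially, since $\gamma_1,\gamma_2$ project to contractible loops in $\C$ and $dz$ is exact there, so $\int_{\gamma_j}dz=0$. For $\omega_1,\omega_2$ I would collapse $\gamma_1$ onto the real interval $(-1,0)$ and $\gamma_2$ onto $(0,1)$, each such integral becoming twice the integral over the interval of a fixed branch of the integrand. On $(-1,0)$ the product $z(z-1)(z+1)$ is positive, so $w$ is real there; hence $\re\int_{\gamma_1}\omega_1=2\int_{-1}^{0}\tfrac{z+\lambda_1}{w}\,dz$ while $\re\int_{\gamma_1}\omega_2=0$ because the integrand of $\omega_2$ is purely imaginary. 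On $(0,1)$ the product is negative, so $w$ is purely imaginary; hence $\re\int_{\gamma_2}\omega_1=0$, while $\re\int_{\gamma_2}\omega_2$ equals $\pm 2$ times a real integral of $\tfrac{z+\lambda_2}{|w|}$. Thus the four real period conditions collapse to the two genuine ones $\re\int_{\gamma_1}\omega_1=0$ and $\re\int_{\gamma_2}\omega_2=0$, governed separately by $\lambda_1$ and $\lambda_2$.

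Each of these I would close by the intermediate value theorem. Taking the branch with $w>0$ on $(-1,0)$: at $\lambda_1=0$ the integrand $z/w$ is negative, so $\re\int_{\gamma_1}\omega_1<0$, whereas at $\lambda_1=1$ the numerator $z+1$ is positive, so $\re\int_{\gamma_1}\omega_1>0$; hence some $\lambda_1\in(0,1)$ gives $0$. Writing $w=i|w|$ on $(0,1)$: at $\lambda_2=-1$ the numerator $z-1$ is negative and at $\lambda_2=0$ it is positive, so some $\lambda_2\in(-1,0)$ makes $\re\int_{\gamma_2}\omega_2=0$. For these $\lambda_1,\lambda_2$ all real periods vanish on the basis $\{\gamma_1,\gamma_2\}$, hence on all of $H_1(X)$, so $f$ is single valued, which is the assertion of the lemma.

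The one point I would flag as the crux is the \emph{decoupling}: that the off-diagonal real periods $\re\int_{\gamma_1}\omega_2$ and $\re\int_{\gamma_2}\omega_1$ vanish identically in $\lambda_1,\lambda_2$. This is exactly what allows two real parameters to dispatch four real equations, and it rests on the reality (resp.\ purely-imaginary nature) of $w$ on the two slits — equivalently on the reflection symmetries $\sigma(z,w)=(\bar z,\bar w)$ and $\tau(z,w)=(\bar z,-\bar w)$, which send $(\omega_1,\omega_2,\omega_3)$ to $(\overline{\omega_1},-\overline{\omega_2},\overline{\omega_3})$ and $(-\overline{\omega_1},\overline{\omega_2},\overline{\omega_3})$. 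Once this structural fact is in place, everything that remains is an elementary sign check on the convergent integrals above.
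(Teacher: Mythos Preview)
Your argument is correct and follows essentially the same route as the paper: the same homology basis collapsed to the real slits $(-1,0)$ and $(0,1)$, the same reality/decoupling observation for the off-diagonal periods, $\omega_3=dz$ exact, and the intermediate value theorem for $\lambda_1\in(0,1)$. The only difference is that you close the $\gamma_2$-period of $\omega_2$ by a second application of IVT on $\lambda_2\in(-1,0)$, whereas the paper exploits the extra $z\mapsto -z$ symmetry of the square torus to set $\lambda_2=-\lambda_1$ directly (which pins $\lambda_2$ down exactly, not just existentially); your opposite signs at $\lambda_1=0,1$ come from choosing the branch $w>0$ on $(-1,0)$ rather than the paper's branch and are harmless.
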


\begin{proof}
$\omega_1,\omega_2,$, and $\omega_3$ are all holomorphic 1-forms on $X$ and have poles only at $z=\infty$.  Thus, they have no residues, and we just need to check whether $\displaystyle \re\int_{\gamma}\Omega=0$ for each $\gamma\in H_1(X)$.

Let $\gamma_1$ and $\gamma_2$ be circles centered at $-\frac{1}{2}$ and $\frac{1}{2}$ with radius 1.  Denote the lifts of $\gamma_1$ and $\gamma_2$ to $\overline{X}$ with the same notation.  They form a basis of $H_1(X)$.  

Since $\omega_3=dz$ is exact, $\displaystyle \re\int_{\gamma_i}\omega_3=0$ for $i=1,2$.

In order to compute the periods of $\omega_1$ and $\omega_2$ on $\gamma_1$ and $\gamma_2$, collapse $\gamma_1$ to the interval $(-1,0)$ and $\gamma_2$ to the interval $(0,1)$.  Then $\displaystyle \re\int_{\gamma_1}\omega_2=\re\int_{\gamma_2}\omega_1=0$ for all $\lambda_1,\lambda_2\in\R$.  Hence, we only need to find $\lambda_1,\lambda_2\in\R$ such that $\displaystyle \re\int_{\gamma_1}\omega_1=\re\int_{\gamma_2}\omega_2=0$.  

We use the intermediate value theorem.  If $\lambda_1=0$ then 
\[
\re\int_{\gamma_1}\omega_1=2\int_{-1}^0\frac{z}{\sqrt{z}\sqrt{z-1}\sqrt{z+1}}dz=2\int_{-1}^0\frac{-z}{\sqrt{-z}\sqrt{1-z}\sqrt{z+1}}dz\geq 0.
\]

If $\lambda_1=1$ then 
\[
\re\int_{\gamma_1}\omega_1=2\int_{-1}^0\frac{z+1}{\sqrt{z}\sqrt{z-1}\sqrt{z+1}}dz=2\int_{-1}^0\frac{-(z+1)}{\sqrt{-z}\sqrt{1-z}\sqrt{z+1}}dz\leq 0.
\]

Thus, by the intermediate value theorem, there exists $\lambda_1\in(0,1)$ such that $\displaystyle \re\int_{\gamma_1}\omega_1=0$.  

If $\lambda_2=-\lambda_1$ then

\[
\re\int_{\gamma_2}\omega_2=-\re\int_{\gamma_1}\omega_1=0
\]
\end{proof}

\begin{lemma}
The map given by \ref{eq2} is an immersion.
\end{lemma}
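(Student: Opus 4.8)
The plan is to verify directly that $f_x\times f_y$ is nowhere zero on $X$, using the identity $f_x\times f_y=\im\bigl(\omega_2\overline{\omega_3},-\omega_1\overline{\omega_3},\omega_1\overline{\omega_2}\bigr)$ from Section~\ref{sec:ftc}. Away from the Weierstrass points $z=0,\pm1$ we use $z$ as coordinate and write $\omega_j=\phi_j\,dz$ with $\phi_1=(z+\lambda_1)/w$, $\phi_2=i(z+\lambda_2)/w$, $\phi_3=1$, so that
\[
f_x\times f_y=\left(\re\frac{z+\lambda_2}{w},\ -\im\frac{z+\lambda_1}{w},\ -\frac{1}{|w|^2}\re\bigl((z+\lambda_1)\overline{(z+\lambda_2)}\bigr)\right).
\]
The key observation is that the third component vanishes automatically once the first two do: if $(z+\lambda_1)/w\in\R$ and $(z+\lambda_2)/w\in i\R$, then $\frac{z+\lambda_1}{w}\cdot\overline{\frac{z+\lambda_2}{w}}$ is purely imaginary, so its real part is zero. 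Hence it suffices to show that $\im\frac{z+\lambda_1}{w}$ and $\re\frac{z+\lambda_2}{w}$ never vanish simultaneously on $X$.

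Recall from the preceding lemma that $\lambda_2=-\lambda_1$ with $\lambda_1\in(0,1)$. First I would treat the generic case $z\notin\{0,\pm1,\lambda_1,-\lambda_1\}$. If both quantities vanish there, then dividing the relations $(z+\lambda_1)/w\in\R$ and $(z+\lambda_2)/w\in i\R$ shows that $\frac{z+\lambda_1}{z+\lambda_2}=\frac{z+\lambda_1}{z-\lambda_1}$ is a nonzero purely imaginary number; clearing denominators and taking real parts turns this into $|z|=\lambda_1$. Writing $z=\lambda_1e^{i\theta}$ with $\theta\in(0,2\pi)$, $\theta\neq\pi$, one has $(z+\lambda_1)^2=4\lambda_1^2\cos^2(\theta/2)\,e^{i\theta}$ and $z(z^2-1)=\lambda_1e^{i\theta}(\lambda_1^2e^{2i\theta}-1)$, hence
\[
\left(\frac{z+\lambda_1}{w}\right)^2=\frac{(z+\lambda_1)^2}{z(z^2-1)}=\frac{4\lambda_1\cos^2(\theta/2)}{\lambda_1^2e^{2i\theta}-1}.
\]
For $(z+\lambda_1)/w$ to be real this nonzero square must be a positive real; but the numerator on the right is a positive real while $\re(\lambda_1^2e^{2i\theta}-1)=\lambda_1^2\cos2\theta-1\le\lambda_1^2-1<0$, so the right-hand side is not a positive real. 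This contradiction rules out singular points with $|z|=\lambda_1$; and when $|z|\neq\lambda_1$ the number $\frac{z+\lambda_1}{z-\lambda_1}$ is not purely imaginary, so the two conditions cannot both hold. Thus any singular point must have $z\in\{\lambda_1,-\lambda_1\}$ or be a Weierstrass point.

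It remains to check these finitely many points. At $z=\lambda_1$ we have $z+\lambda_2=0$, so $\re\frac{z+\lambda_2}{w}=0$ automatically, and $\im\frac{z+\lambda_1}{w}=0$ forces $w\in\R$; but $w^2=\lambda_1(\lambda_1^2-1)<0$ since $0<\lambda_1<1$, a contradiction. At $z=-\lambda_1$ the roles interchange and $w^2=-\lambda_1(\lambda_1^2-1)>0$ forces $w\in\R$ while the surviving condition $\re\frac{z+\lambda_2}{w}=0$ demands $w\in i\R$, again a contradiction. Finally, at each Weierstrass point $z=0,\pm1$ one passes to a local coordinate (for instance $w$ near $z=0$): there $\omega_3=dz$ acquires a simple zero while $\omega_1,\omega_2$ remain holomorphic with nonzero leading coefficients, so the first two components of $f_x\times f_y$ vanish while the third equals $-4\lambda_1\lambda_2=4\lambda_1^2$ at $z=0$, and $-(1-\lambda_1^2)$ and $\lambda_1^2-1$ at $z=1$ and $z=-1$, all nonzero. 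Therefore $f$ is a regular immersion on all of $X$.

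The step requiring the most care is the reduction in the first two paragraphs: one has to keep track of the distinction between ``$(z+\lambda_1)/w$ is real'' and ``$\bigl((z+\lambda_1)/w\bigr)^2$ is a \emph{positive} real'' when replacing the transcendental vanishing conditions by the algebraic identity $w^2=z(z^2-1)$, and one has to peel off the zeros of $z+\lambda_1$, $z+\lambda_2$ and $w$ as separate cases. A more symmetry-based alternative --- restricting to a fundamental domain for the reflections $\tau(z,w)=(\bar z,\bar w)$ and $\sigma(z,w)=(\bar z,-\bar w)$ together with the order-four automorphism $(z,w)\mapsto(-z,iw)$, in the spirit of Section~\ref{sec:surf(0,0,1)(2,2,1)} --- is available but does not seem to shorten the argument.
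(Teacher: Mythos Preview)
Your proof is correct and follows the same overall structure as the paper's: split into generic points (where $w\neq 0$) and the three Weierstrass points $z=0,\pm1$, and at the latter pass to the local coordinate $w$ to see that $\im(\omega_1\overline{\omega_2})\neq 0$. The only difference is in the algebra for the generic case. The paper splits $\frac{z\pm\lambda_1}{w}=\frac{z}{w}\pm\frac{\lambda_1}{w}$ and from the two vanishing conditions deduces directly that $\frac{z}{w}=\frac{\lambda_1}{\bar w}$, i.e.\ $z|w|^2=\lambda_1 w^2$; substituting $w^2=z(z^2-1)$ gives $|z||z^2-1|=\lambda_1(z^2-1)$, which forces $z=\pm\lambda_1$ together with $\lambda_1>1$, a contradiction. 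Your route via the quotient $(z+\lambda_1)/(z-\lambda_1)\in i\R$, the consequence $|z|=\lambda_1$, and the explicit parametrization $z=\lambda_1 e^{i\theta}$ is slightly longer but equally valid; your preliminary observation that the third component of $f_x\times f_y$ vanishes automatically once the first two do is a clean way to justify focusing only on those two.
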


\begin{proof}
In order for $f$ to be an immersion, we need 
\[
f_x\times f_y=\im\left(\omega_2\bar{\omega}_3,-\omega_1\bar{\omega_3},\omega_1\bar{\omega}_2\right)\neq 0
\]

If $w\neq 0$ (that is $z\neq 0$, $z \neq 1$, and $z\neq -1$), then
\[
\im\omega_2\bar{\omega}_3=\im\left(\frac{i(z-\lambda_1)}{w}\right)=\re\left(\frac{z-\lambda_1}{w}\right)=\re\left(\frac{z}{w}\right)-\re\left(\frac{\lambda_1}{w}\right)
\]
and
\[
\im(-\omega_1\bar{\omega}_3)=\im\left(-\frac{z+\lambda_1}{w}\right)=-\im\left(\frac{z}{w}\right)-\im\left(\frac{\lambda_1}{w}\right)
\]
If $\im\omega_2\bar{\omega}_3=\im(-\omega_1\bar{\omega}_3)=0$ then 
\[
\re\left(\frac{z}{w}\right)=\re\left(\frac{\lambda_1}{w}\right)
\]
and 
\[
\im\left(\frac{z}{w}\right)=-\im\left(\frac{\lambda_1}{w}\right)
\]
Hence,
$\displaystyle \frac{z}{w}=\frac{\lambda_1}{\bar{w}}\Rightarrow z|w|^2=\lambda_1w^2\Rightarrow|z||z^2-1|=\lambda(z^2-1)$, but then we must have $z=\pm\lambda_1$ and $\lambda_1>1$.  This  contradicts the fact that $0<\lambda_1<1$.

We also need to check when $w=0$, that is, when $z=0$, $z=1$, or $z=-1$.  In a neighborhood of $w=0$, we can write 
\[
\omega_1=\frac{2(z+\lambda_1)}{3z^2-1}dw, \hspace{.3in} \omega_2=\frac{2(z-\lambda_1)i}{3z^2-1}dw
\]
If $z=0$, $z=1$, or $z=-1$ then
\[
\im\left(\omega_1\bar{\omega}_2\right)=\im\left(\frac{-4(z^2-\lambda_1^2)i}{(3z^2-1)^2}\right)\neq 0
\]
\end{proof}

\begin{lemma}
$f(X)$ is a properly embedded surface.
\end{lemma}

\begin{proof}
The Riemann surface $\overline{X}$ has the automorphisms 

$\displaystyle \alpha:(z,w)\rightarrow (z,-w)$ and $\displaystyle \tau:(z,w)\rightarrow (\overline{z},\overline{w})$, and 
\[
\begin{split}
\alpha^*(\omega_1,\omega_2,\omega_3)&=(-\omega_1,-\omega_2,\omega_3)\\
\tau^*(\omega_1,\omega_2,\omega_3)&=(\overline{\omega}_1,-\overline{\omega}_2,\overline{\omega}_3)\\
\end{split}
\]

Thus, $\tau$ induces a reflection in a plane parallel to the $x_1x_3$-plane and $\alpha\circ\tau$ induces a reflection in a plane parallel to the $x_2x_3$-plane, and so $f(X)$ is invariant under reflection in two orthogonal planes.

The fixed-point sets of these reflections are the same as the images of the fixed point sets of $\tau$ and $\alpha\circ\tau$.  The fixed point set of $\tau$ is the real intervals $[-1,0]\cup[1,\infty)$.  The fixed point set of $\alpha\circ\tau$ is the real intervals $(-\infty,-1]\cup[0,1]$.

$f(X)$ consists of four congruent pieces that are bounded by four symmetry lines.  We will show that $f(X)$ is an embedding by showing that each congruent piece is a graph over a plane.

Let $X_1$ be the sheet of $X$ covering the upper half of the z-plane with $w(2)=\sqrt{6}$.  Then $f(X_1)$ is one of the congruent pieces whose boundary is given by $f(\R)$.  As $\displaystyle \re\int\omega_3 dz=\re z$ is increasing on $\R$, the projection of the boundary of $f(X_1)$ onto the $x_1x_3$-plane is an embedding.

The Gauss map is given by 
\[
\fracG(z)=\frac{\im\left(\omega_2\overline{\omega}_3,-\omega_1\overline{\omega}_3,\omega_1\overline{\omega}_2\right)}{||\im\left(\omega_2\overline{\omega}_3,-\omega_1\overline{\omega}_3,\omega_1\overline{\omega}_2\right)||},
\]
and $\displaystyle -\omega_1\overline{\omega}_3=-\frac{z+\lambda_1}{w}$.  On the interior of $X_1$, we have 
\[
\im z\neq 0 \Rightarrow \im w\neq 0 \Rightarrow \im(-\omega_1\overline{\omega}_3)\neq 0.
\]
Hence, the $x_2$-coordinate of the normal vector to $f(X_1)$ never vanishes on the interior of $f(X_1)$, and so the projection of $f(X_1)$ to the $x_1x_3$-plane is a submersion that is one-to-one on the boundary.  Therefore, the projection is one-to-one on $f(X_1)$, and $f(X_1)$ is a graph over the $x_1x_3$-plane.

By Theorem, \ref{thm:order3}, $f$ is proper.  
\end{proof}

We do not know much about the existence of properly embedded tori with ends of 
type $(2,2,n)$ for $n>3$.

\subsection{Spheres with two  ends of type \texorpdfstring{$(2,2,n)$}{(2,2,n)}}
\label{sec:(2,2,n)}
An example of Karcher of a complete minimal sphere with two Enneper ends has an embedded variation with two ends of type $(2,2,3)$ is shown in Figure \ref{figure:enneper2}.

\begin{figure}[h]
	\centerline{ 
		\includegraphics[width=2.5in]{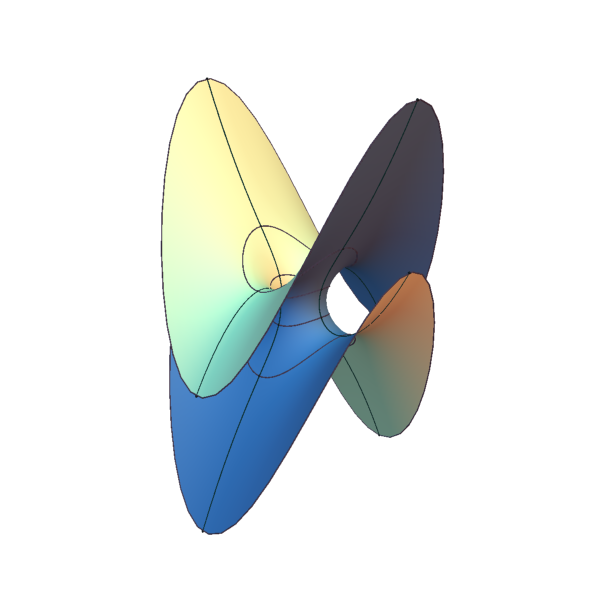}
		\includegraphics[width=2.5in]{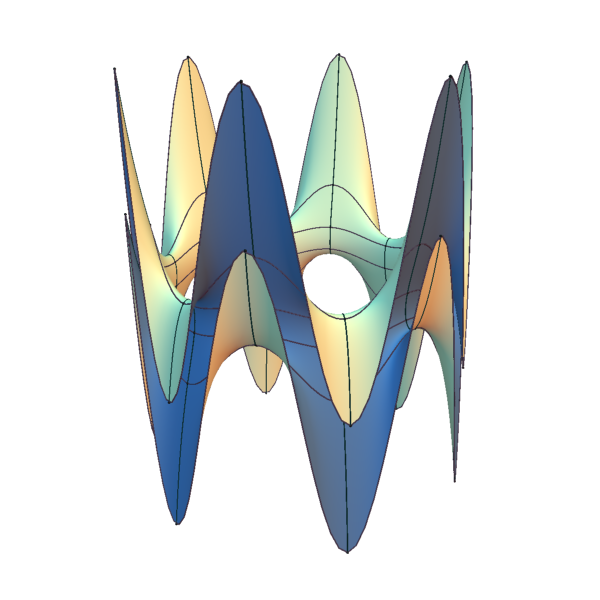}
	}
	\caption{Sphere with two $(2,2,3)$ ends and a sphere with two $(2,2,7)$ ends}
	\label{figure:enneper2}
\end{figure}

The meromorphic  1-forms are defined in $\C^*$ and given by 
\begin{align*}
\omega_1&= \left(1-\frac1{z^2}\right)\, dz
\\
\omega_2&=i  \left(1+\frac1{z^2}\right)\, dz
\\
\omega_3 &= a \left(z^{n-2}-z^{-n}\right)+b\frac n z \, dz
\end{align*}
for positive real constants $a$ and $b$.

To see that these surfaces are regular and embedded, we compute in polar coordinates $z= r e^{i t}$ that
\[
f(r,t) = \left(
\frac1r (r^2+1)\cos(t), -\frac1r (r^2+1)\sin(t), 
\frac a {n-1} (r^{1-n}+r^{n-1})\cos((n-1)t) +b n \log(r)
\right)\ .
\]
For regularity, it is easy to check that the third coordinate of $f_r\times f_t$ is equal to $1/r^3-r$.

If we had $f(r_1,t_1) = f(r_2, t_2)$, the first two coordinates would imply that $t_1=t_2 \pmod{2\pi}$ and either $r_1=r_2$ or $r_2=1/r_1$. In the latter case, the last coordinate would imply that $b n \log(r_1)=-b n \log(r_1)$ as the first term is invariant under the $r\mapsto 1/r$. That, however, forces $r_1=r_2$.

\subsection{Several ends of type \texorpdfstring{$(0,0,1)$}{(0,0,1)}}
\label{subsection:(0,0,1)}
The simplest type of end is the horn end of type $(0,0,1)$. It is very easy to construct symmetric embedded examples.

Note, however, that the direction of these ends is not completely arbitrary, as the maximum principle has to be satisfied. Therefore one needs to have at least four such ends, as shown in Figure \ref{figure:4(0,0,1)}.  Figure \ref{figure:urchin} shows a symmetric example with more ends that is easily seen to be embedded.

\begin{figure}[h]
	\centerline{ 
		\includegraphics[width=2.5in]{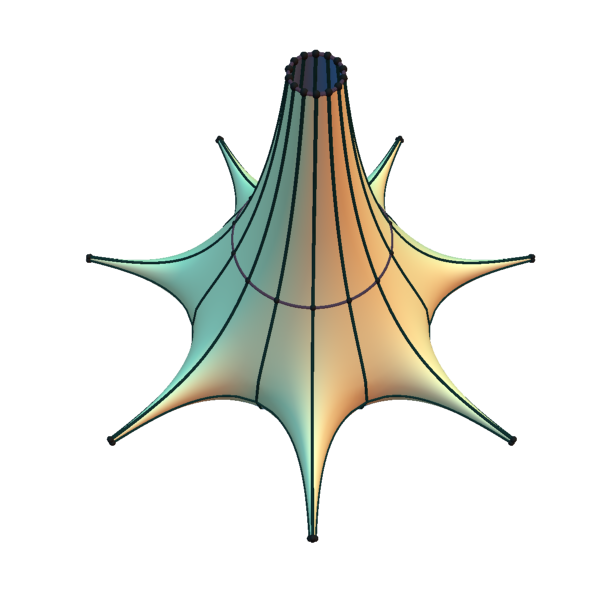}
	}
	\caption{A symmetric  9-cusp with the bottom cusp hidden}
	\label{figure:urchin}
\end{figure}

Note that the residue theorem still requires the flux vectors of the ends to balance.
 
The coordinate 1-forms for the symmetric $k+2$-cusp are defined on the plane punctured at the $k^{\text{th}}$ roots and unity, 0, and $\infty$, and are given by
\begin{align*}
\omega_1&= \frac {z^{k-2}+1}{z^k-1}\, dz
\\
\omega_2&=i \frac {z^{k-2}-1}{z^k-1}\, dz
\\
\omega_3 &= \frac1 z\, dz
\end{align*}

This surface is embedded for $k\geq 3$ - if $k<3$ then the maximum principle is violated - and has total curvature $\displaystyle\int_{\C}K dA=-2k\pi$.

When $k=4$, the six ends are asymptotic to the coordinate axes.  If we translate a copy of this surface by the vector $(n,n,n)$ then, for sufficiently large $n$, this surface doesn't intersect the original surface.  Thus, the Strong Half-Space theorem of Hoffman and Meeks doesn't hold for harmonic surfaces.

\subsection{Adding ends of type \texorpdfstring{$(0,0,1)$}{(0,0,1)} to a surface}
Since ends of type (0,0,1) occupy very little space, it is quite easy to modify embedded harmonic surfaces by adding any number of (0,0,1) ends.  A nice example is adding one end of type (0,0,1) to the sphere with a single (2,3,4) end.  Let

\begin{align*} 
   \omega_1 = {}& i \\
    \omega_2 = {}&  i z-\frac 1 z\\
     \omega_3 = {}& z^2+1 \\
\end{align*}

so that
\[
f(x,y) = \left(-y, -x y - \frac12\log\left(x^2+y^2\right), \frac13 x\left(x^2-3y^2+1\right)\right)
\]

To show that this is an embedding, it suffices to show that for fixed $y$ the curves
\[
x\mapsto c(x) =\left(-x y - \frac12\log\left(x^2+y^2\right), \frac13 x\left(x^2-3y^2+1\right)\right)
\]
are embeddings.
We compute
\[
c'(x) = \left(-\frac{x}{x^2+y^2}-y,x^2-y^2+1\right)
\]

From this it is easy to see that 
\begin{enumerate}
\item if $y^2<1$, then $c'(x)$ is never horizontal;
\item if $4 y^4>1$, then $c'(x)$ is never vertical
\end{enumerate}

This implies both that $f$ is injective and $df$ has full rank everywhere.

\begin{figure}[h]
	\centerline{ 
		\includegraphics[width=2.5in]{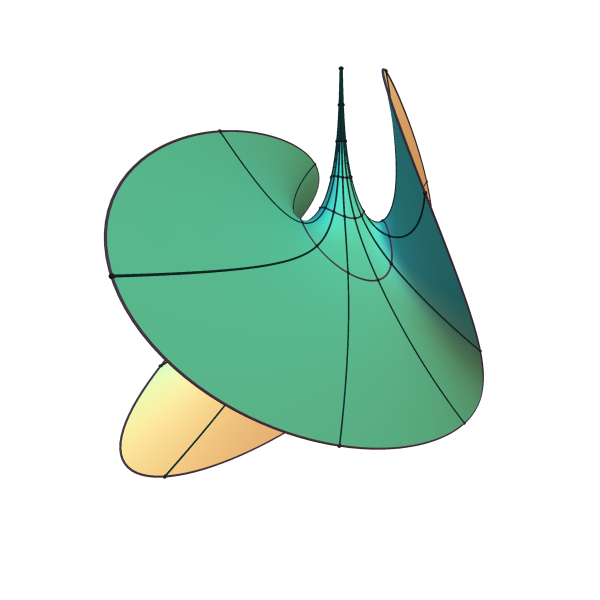}
	}
	\caption{Embedded 2-Noid with one horn and one $(2,3,4)$-end}
	\label{figure:horn234}
\end{figure}

\subsection{Spheres with  ends of type \texorpdfstring{$(0,1,2)$}{(0,1,2)}}

Ends of type $(0,1,2)$ occupy very little space --- therefore many arrangements are possible.

\begin{figure}[h]
	\centerline{ 
		\includegraphics[height=2.5in]{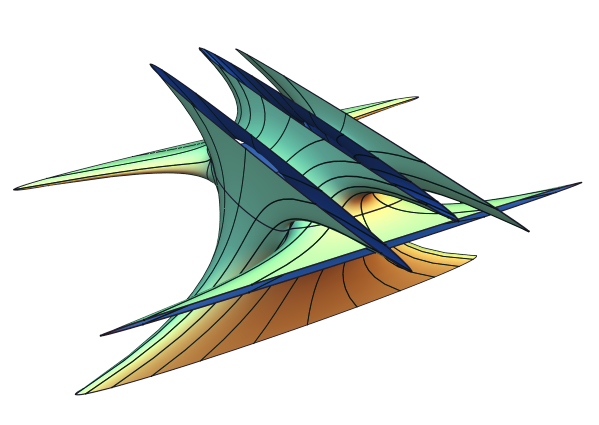}
		\includegraphics[width=2.8in]{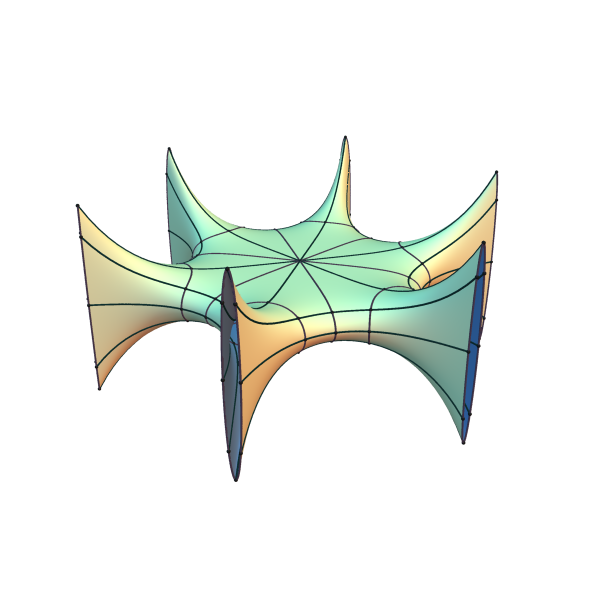}
	}
	\caption{Two spheres with six (0,1,2) ends}
	\label{figure:6(0,1,2)}
\end{figure}


As with (0,0,1) ends, one can construct an embedded sphere with k (0,1,2) ends placed at the $k^{\text{th}}$ roots of unity .  
\begin{proposition}
Let
\[
X=\bar{\C}-\{1,e^{2\pi i/k},e^{4\pi i/k},\cdots,e^{2(k-1)\pi i/k}\},
\] 
\begin{equation}
\omega_1=\frac{z^{k-2}+1}{k(z^k-1)}dz,\hspace{.2in} \omega_2=i\frac{z^{k-2}-1}{k(z^k-1)}dz,\hspace{.2in} \omega_3=\frac{z^{k-1}}{(z^k-1)^2}dz
\end{equation}
\[
\Omega=(\omega_1,\omega_2,\omega_3).
\]
The map
\begin{equation}
\displaystyle f(z)=\re\int^z\Omega
\label{eq1}
\end{equation}
is a proper harmonic embedding of $X$ into $\R^3$.
\end{proposition}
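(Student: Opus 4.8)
The plan is to exhibit $f(X)$ as two graphs over the $x_1x_2$--plane, glued along a curve in a horizontal plane, and then to reduce the injectivity check to a single angular sector using the order-$2k$ dihedral symmetry; harmonicity is automatic since $f=\re\int\Omega$. First I would check that $f$ is single valued on $X$: the form $\omega_3=-\tfrac1k\,d\!\bigl(\tfrac1{z^k-1}\bigr)$ is exact, and at a $k$-th root of unity $\zeta=e^{2\pi ij/k}$ the residues of $\omega_1$ and $\omega_2$ are the real numbers $\tfrac2{k^2}\cos(2\pi j/k)$ and $\tfrac2{k^2}\sin(2\pi j/k)$; since $H_1(X)$ is generated by loops around the punctures, all periods of $\Omega$ are imaginary. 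These same data identify each puncture as an end of type $(0,1,2)$: $\omega_3$ has a pole of order exactly $2$ and no residue there, $\omega_1,\omega_2$ have at most simple poles, and the residue vector $(\cos,\sin)(2\pi j/k)$ never vanishes, so an affine rotation in the $x_1x_2$--plane puts the end in reduced form $(0,1,2)$. Next I would record the identity
\[
f_3(z)-\frac{1}{2k}=\frac{1-|z|^{2k}}{2k\,|z^k-1|^2},
\]
which follows at once from the fact that $z\mapsto\tfrac1{z^k-1}$ carries $\{|z|<1\}$ onto the half-plane $\{\re w<-\tfrac12\}$; hence $f$ maps $\{|z|=1\}$ into the plane $\Pi:=\{x_3=\tfrac1{2k}\}$, lies strictly above $\Pi$ on $\{|z|<1\}$, and strictly below $\Pi$ on $\{|z|>1\}$.

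For regularity, write $\Phi=(\phi_1,\phi_2,\phi_3)$ for the coefficient vector of $\Omega$. Then $f$ fails to be an immersion at $z$ exactly when $[\phi_1:\phi_2:\phi_3]$ lies in $\RP^2\subset\CP^2$. The ratio $\phi_1:\phi_2$ is real precisely when $\tfrac{z^{k-2}+1}{z^{k-2}-1}$ is purely imaginary, i.e.\ only on $\{|z|=1\}$; and a short computation on $\{|z|=1\}$ shows that $\phi_3$ is purely imaginary and nonzero relative to $\phi_2$ (resp.\ relative to $\phi_1$, at the finitely many points where $\phi_2$ vanishes), so $[\Phi]\notin\RP^2$ there as well. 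Thus $f$ is a regular immersion on all of $X$. Equivalently, the $x_3$--component of $f_x\times f_y$ equals $\im(\omega_1\overline{\omega_2})=\tfrac{1-|z|^{2k-4}}{k^2|z^k-1|^2}$, which --- and this is where $k\ge3$ enters --- is strictly positive on $\{|z|<1\}$ and strictly negative on $\{|z|>1\}$; so each of those two pieces is a local graph over the $x_1x_2$--plane, with upward resp.\ downward normal, and the seam $\{|z|=1\}$ is regular.

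For embeddedness, since $f_3-\tfrac1{2k}$ has a strict sign on each of $\{|z|<1\}$ and $\{|z|>1\}$, the images of the three pieces $\{|z|<1\}$, $\{|z|=1\}$, $\{|z|>1\}$ cannot overlap, so it suffices to show $f$ is injective on each. Here I would use the symmetries: $\sigma(z)=e^{2\pi i/k}z$ satisfies $\sigma^*(\omega_1+i\omega_2)=e^{2\pi i/k}(\omega_1+i\omega_2)$ and $\sigma^*\omega_3=\omega_3$, hence induces a rotation by $2\pi/k$ about the $x_3$--axis, while $\tau(z)=\bar z$ satisfies $\tau^*(\omega_1,\omega_2,\omega_3)=(\overline{\omega_1},-\overline{\omega_2},\overline{\omega_3})$, hence induces reflection in $\{x_2=0\}$; together they generate a $D_k$--action with fundamental sector $S=\{0\le\arg z\le\pi/k\}$, whose two boundary rays map into mirror planes and whose arc of $\{|z|=1\}$ maps into $\Pi$. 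On $S\cap\{|z|<1\}$ the map is a local graph over $x_1x_2$ whose boundary is an embedded curve --- each boundary arc lies in a distinct plane and is monotone there by the sign of the relevant coordinate of $f_x\times f_y$ --- so the monodromy argument used for the earlier examples (e.g.\ the four-ended $(0,0,1)$ surface) shows it is a genuine graph; the same applies to $S\cap\{|z|>1\}$ and to the seam arc inside $\Pi$. Reassembling the $2k$ dihedral images gives injectivity on all of $X$. Properness is then immediate: $X$ is a $k$-punctured sphere, and near each puncture $\zeta$ at least one of $\omega_1,\omega_2$ has nonzero residue, so the corresponding coordinate of $f$ grows like a nonzero multiple of $\log|z-\zeta|$ and $|f|\to\infty$.

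I expect the main obstacle to be the graph/monodromy step over the fundamental sector, and within it the behaviour at the punctures sitting on $\partial S$: there the pieces are unbounded while $f_3$ stays bounded (tending to $\tfrac1{2k}$), so one must check carefully that the two halves of the seam curve meet only in the mirror planes, which is exactly what makes the surface close up along $\Pi$ without self-intersection.
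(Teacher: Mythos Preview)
Your approach is essentially the paper's: the same regularity check via $\im(\omega_1\overline{\omega_2})=(1-|z|^{2k-4})/(k^2|z^k-1|^2)$ and a separate computation on $|z|=1$, the same split of $X$ by the sign of $f_3-\tfrac{1}{2k}$ (the paper derives this from the involution $z\mapsto 1/\bar z$ rather than from your explicit Möbius formula, but the content is identical), the same $D_k$ symmetry and local-graph argument on each half, and the same properness via logarithmic growth of $f_1,f_2$ near the punctures. The only real difference is that you further reduce the graph step to a fundamental sector and explicitly flag the boundary-embedding check, whereas the paper simply asserts that an upward-pointing normal on $\{|z|<1\}$ makes that piece a graph; your version is, if anything, the more careful of the two.
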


These surfaces are defined on the punctured sphere with ends of type  $(0,1,2)$ at the punctures corresponding to the k-th roots of unity.  The surface has a rotational symmetry of $\theta=e^{2\pi i/k}$.  

The ends of the surface are located at the k-th roots of unity.  Due to the rotational symmetry of the surface, each end is the same type.  The end at $z=1$ is type $(1,0,2)$, and so all the other ends are of the same type.

\begin{lemma}
The map given by equation \ref{eq1} is an immersion.
\end{lemma}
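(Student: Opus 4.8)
The starting point is the formula for the unit normal recorded in Section~\ref{sec:surf223}: writing $\omega_i$ also for its coefficient against $dz$, the map $f$ is an immersion exactly when
\[
f_x\times f_y=\im\bigl(\omega_2\overline{\omega_3},\,-\omega_1\overline{\omega_3},\,\omega_1\overline{\omega_2}\bigr)
\]
is nowhere zero on $X$. The plan is to split $X$ into the locus where the third component of this vector is nonzero, which is easy, and the circle $|z|=1$, where that component vanishes identically and a separate argument is needed.

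First I would expand $(z^{k-2}+1)\overline{(z^{k-2}-1)}$ to obtain
\[
\im\bigl(\omega_1\overline{\omega_2}\bigr)=\frac{1-|z|^{2(k-2)}}{k^2\,|z^k-1|^2}\ .
\]
For $k\ge 3$ this is nonzero whenever $|z|\neq 1$, so $f_x\times f_y\neq 0$ there; in particular this settles the point $z=0$, and it settles $z=\infty$ as well once one passes to the coordinate $1/z$, in which $\omega_1$ and $\omega_2$ remain holomorphic with leading coefficients $-1/k$ and $-i/k$, which are independent over $\R$.

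It then remains to treat $z$ with $|z|=1$ and $z^k\neq 1$. Here I would use $\bar z=1/z$ to bring the first two components into the form
\[
\omega_2\overline{\omega_3}=\tfrac{i}{k}\,(z^{k-2}-1)\,w,\qquad -\omega_1\overline{\omega_3}=-\tfrac{1}{k}\,(z^{k-2}+1)\,w,\qquad w:=\frac{z^{k+1}}{(z^k-1)^3}\ ,
\]
so that on $|z|=1$ one has $f_x\times f_y=\tfrac1k\bigl(\re((z^{k-2}-1)w),\,-\im((z^{k-2}+1)w),\,0\bigr)$. Suppose both of the first two entries vanish. Putting $A=z^{k-2}w$ and $B=w$, this reads $\re(A-B)=0$ and $\im(A+B)=0$, i.e. $\re A=\re B$ and $\im A=-\im B$, hence $A=\overline B$, that is $z^{k-2}w=\overline w$. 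On the other hand, the same substitution $\bar z=1/z$ gives $\overline w=-z^{k-2}w$ (equivalently $\overline w/w=-z^{k-2}$); combining the two yields $z^{k-2}w=0$, impossible since $z\neq 0$ and $z^k\neq 1$ force $w\neq 0$. Thus $f_x\times f_y$ never vanishes and $f$ is an immersion.

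The only genuine computation is the reduction on $|z|=1$, and the key identity there is $\overline w/w=-z^{k-2}$; the one thing to be careful about is tracking which quantities are actually real on that circle, so that the step $A=\overline B$ is legitimate. The rest is routine algebra, and if desired one could instead invoke the $k$-fold rotational symmetry $z\mapsto e^{2\pi i/k}z$ together with the evident reflections to cut the verification down to a fundamental sector.
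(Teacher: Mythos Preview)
Your proof is correct and follows the same overall strategy as the paper: compute $\im(\omega_1\overline{\omega_2})=\dfrac{1-|z|^{2(k-2)}}{k^2|z^k-1|^2}$ to reduce to the unit circle, then check the first two components there. The only difference is cosmetic: on $|z|=1$ the paper parametrizes $z=e^{it}$ and shows $\bigl(\cos((k-1)t)-\cos t,\,-\sin((k-1)t)-\sin t\bigr)=0$ forces $e^{ikt}=1$, whereas you use $\bar z=1/z$ and the identity $\overline w/w=-z^{k-2}$ to reach the same contradiction algebraically---a slightly cleaner packaging of the same computation.
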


\begin{proof}
In order for $f$ to be an immersion, we need 
\[
f_x\times f_y=\im\left(\omega_2\bar{\omega}_3,-\omega_1\bar{\omega_3},\omega_1\bar{\omega}_2\right)\neq \vec{0}
\]
If $z=0$ then $\displaystyle\omega_1\bar{\omega}_2=\frac{i}{k^2}$.  If $z=\infty$ then $\displaystyle\omega_1\bar{\omega}_2=\frac{i}{k^2}$.  If $z\not\in\{0, \infty\}$ then
\[
\im\left(\omega_2\bar{\omega}_3,-\omega_1\bar{\omega_3},\omega_1\bar{\omega}_2\right)=\left(\re\left(\frac{(z^{k-2}-1)\bar{z}^{k-1}}{k|z^k-1|^2(\bar{z}^k-1)}\right),-\im\left(\frac{(z^{k-2}+1)\bar{z}^{k-1}}{k|z^k-1|^2(\bar{z}^k-1)}\right),\frac{1-|z^{k-2}|^2}{k^2|z^k-1|^2}\right)
\]
Thus, $\im (\omega_1\bar{\omega}_2)=0$ iff $z=e^{it}$ for some $t\in[0,2\pi]$.  If $z=e^{it}$ then

\[
\im\left(\omega_2\bar{\omega}_3,-\omega_1\bar{\omega_3},\omega_1\bar{\omega}_2\right)=\left(\frac{2(\cos{((k-1)t)}-\cos{t})}{k|e^{kti}-1|^4},-\frac{2(\sin{((k-1)t)}+\sin{t})}{k|e^{kti}-1|^4},0\right)
\]

However, 
\[
\left(2(\cos{((k-1)t)}-\cos{t}),-2(\sin{((k-1)t)}+\sin{t})\right)=(0,0)
\]
iff $z=e^{2\pi ji/k}$, but $e^{2\pi ji/k}\not\in X$.  Thus, $f$ is an immersion.
\end{proof}

\begin{lemma}
The surface given by equation \ref{eq1} is invariant under reflection in two orthogonal planes and a rotation about the $x_3$-axis by $e^{2\pi i/k}$.
\label{lm}
\end{lemma}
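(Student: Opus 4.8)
The plan is to realize the three claimed rigid motions as push‑forwards under $f$ of three symmetries of the punctured sphere $X$ from \ref{eq1}: the rotation $\sigma(z)=e^{2\pi i/k}z$, the conjugation $\rho_1(z)=\overline z$, and the anticonformal involution $\rho_2(z)=1/\overline z$. Each of these permutes the $k$-th roots of unity — multiplication by $e^{2\pi i/k}$ cycles them, $z\mapsto\overline z$ sends $\zeta$ to $\overline\zeta=\zeta^{-1}$, and $z\mapsto 1/\overline z$ fixes each of them since $|\zeta|=1$ — and permutes $\{0,\infty\}$, so all three are automorphisms of $X$ in the appropriate holomorphic or anticonformal sense. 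Since $\rho_2=\iota\circ\rho_1$ with $\iota(z)=1/z$ a conformal automorphism of $X$, it suffices to compute the pullbacks of $(\omega_1,\omega_2,\omega_3)$ under $\sigma$, $\rho_1$, and $\iota$, and then invoke the usual fact that for $g=\re\int^z\Omega$ one has $d(g\circ\varphi)=\re(\varphi^{*}\Omega)$ (with $\varphi^{*}$ understood in the real sense used elsewhere in the paper for anticonformal $\varphi$).

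For the rotation I would first record the simplifications $\omega_1+i\omega_2=\frac{2}{k(z^k-1)}\,dz$ and $\omega_1-i\omega_2=\frac{2z^{k-2}}{k(z^k-1)}\,dz$. Since $(e^{2\pi i/k}z)^k-1=z^k-1$ and $d(e^{2\pi i/k}z)=e^{2\pi i/k}\,dz$, one reads off immediately $\sigma^{*}\omega_3=\omega_3$ (using $e^{2\pi i k/k}=1$), $\sigma^{*}(\omega_1+i\omega_2)=e^{2\pi i/k}(\omega_1+i\omega_2)$, and hence $\sigma^{*}(\omega_1-i\omega_2)=e^{-2\pi i/k}(\omega_1-i\omega_2)$. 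Writing $R$ for the matrix of rotation by $2\pi/k$ about the third axis, these three identities are exactly $\sigma^{*}(\omega_1,\omega_2,\omega_3)=R\,(\omega_1,\omega_2,\omega_3)$; since $R$ is real and $df=\re(\omega_1,\omega_2,\omega_3)$, this gives $d(f\circ\sigma)=R\,df=d(R\circ f)$, so $f\circ\sigma=R\circ f+\mathrm{const}$ and $f(X)$ is invariant under a rotation by $2\pi/k$ about a vertical axis.

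For the reflections I would note that the coefficient functions of $\omega_1$ and $\omega_3$ have real coefficients while that of $\omega_2$ is $i$ times such a function, so $\rho_1^{*}(\omega_1,\omega_2,\omega_3)=(\overline{\omega_1},-\overline{\omega_2},\overline{\omega_3})=(\omega_1,-\omega_2,\omega_3)$, whence $f\circ\rho_1=(f_1,-f_2,f_3)+\mathrm{const}$: reflection in a plane parallel to the $x_1x_3$-plane. For $\iota(z)=1/z$ the substitutions $z^{k-2}\mapsto z^{2-k}$, $z^k-1\mapsto (1-z^k)/z^k$, $dz\mapsto -dz/z^2$ simplify (after clearing $z$-powers) to $\iota^{*}(\omega_1,\omega_2,\omega_3)=(\omega_1,-\omega_2,-\omega_3)$, so $\iota$ induces the half-turn $\mathrm{diag}(1,-1,-1)$ about the $x_1$-axis. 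Composing, $\rho_2^{*}=(\iota\circ\rho_1)^{*}=\rho_1^{*}\circ\iota^{*}$ yields $\rho_2^{*}(\omega_1,\omega_2,\omega_3)=(\omega_1,\omega_2,-\omega_3)$, so $f\circ\rho_2=(f_1,f_2,-f_3)+\mathrm{const}$: reflection in a plane parallel to the $x_1x_2$-plane. These two mirror planes are orthogonal, which together with the rotation gives the statement.

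All of these are routine computations; the only point requiring a bit of care is the bookkeeping of the additive constants of integration. Each identity $f\circ\varphi=M\circ f+c_\varphi$ already exhibits $f(X)$ as invariant under the rigid motion $x\mapsto Mx+c_\varphi$, which is a reflection in a plane for $\varphi=\rho_1,\rho_2$ and a rotation about a vertical axis for $\varphi=\sigma$ (here one uses that $I-R$ is invertible on the $x_1x_2$-plane while $R$ fixes $e_3$, so $x\mapsto Rx+c_\sigma$ has a vertical line of fixed points). Using the relations $\sigma^k=\rho_1^{2}=\rho_2^{2}=\mathrm{id}$ and $\rho_1\sigma\rho_1=\sigma^{-1}$ one checks that a single choice of base point for $\int^{z}\Omega$ simultaneously places the rotation axis and both mirror planes through the origin. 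This last normalization is the only place any genuine (if minor) work is needed; there is no deeper obstacle, and the embeddedness and properness already proved in the surrounding propositions play no role here.
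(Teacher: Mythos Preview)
Your argument is correct and follows essentially the same route as the paper: you use the same three domain symmetries $\sigma(z)=e^{2\pi i/k}z$, $\rho_1(z)=\overline z$, $\rho_2(z)=1/\overline z$ and compute their pullbacks of $(\omega_1,\omega_2,\omega_3)$ to read off the rigid motions. The only cosmetic differences are that you factor $\rho_2=\iota\circ\rho_1$ through the conformal inversion rather than computing $\rho_2^{*}$ directly, and you are more explicit than the paper about the additive constants; one small notational slip is writing $(\overline{\omega_1},-\overline{\omega_2},\overline{\omega_3})=(\omega_1,-\omega_2,\omega_3)$, which is only true upon taking real parts, but your conclusion $f\circ\rho_1=(f_1,-f_2,f_3)+\mathrm{const}$ is correct.
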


\begin{proof}
Let $\alpha:\bar{C}\mapsto\bar{C}$, $\tau:\bar{C}\mapsto\bar{C}$, and $\sigma:\bar{C}\mapsto\bar{C}$ be given by $\alpha(z)=\bar{z}$, $\displaystyle\tau(z)=\frac{1}{\bar{z}}$, and $\displaystyle\sigma(z)=e^{2\pi i/k}z$.  Then it is easy to show that 
\[
\alpha^*(\omega_1,\omega_2,\omega_3)=(\bar{\omega}_1,-\bar{\omega}_2,\bar{\omega}_3),
\]
\[
\tau^*(\omega_1,\omega_2,\omega_3)=(\bar{\omega}_1,\bar{\omega}_2,-\bar{\omega}_3),
\]
and
\[
\sigma^*(\omega_1+\omega_2i,\omega_3)=(e^{2\pi i/k}(\omega_1+\omega_2i),\omega_3).
\]

Thus, $\alpha$ induces a reflection in a plane parallel to the $(x_1,x_3)$ plane, $\tau$ induces a reflection in a plane parallel to the $(x_1,x_2)$ plane, and $\sigma$ induces a rotation about the $x_3$-axis by $e^{2\pi i/k}$.

The fixed-point sets of the reflections are the images of the fixed point sets of $\alpha$, $\tau$, and $\sigma$.  The fixed point set of $\alpha$ is $\R$.  The fixed point set of $\tau$ is $\{z :\hspace{.1cm} |z|=1\}$.  The image of the fixed point set of $\tau$ is a curve in the horizontal plane $\displaystyle x_3=\frac{1}{2k}$ because 
\[
\begin{split}
f_3(e^{it})&=\re\frac{-1}{k(e^{ikt}-1)}\\
&=\re\frac{1-e^{-ikt}}{k|e^{ikt}-1|^2}\\
&=\frac{1-\cos(kt)}{k\left(\left(\cos(kt)-1\right))^2+\sin^2(kt)\right)}\\
&=\frac{1-\cos(kt)}{k\left(\cos^2(kt)-2\cos(kt)+1+\sin^2(kt)\right)}\\
&=\frac{1-\cos(kt)}{k\left(2-2\cos(kt)\right)}\\
&=\frac{1}{2k}\\
\end{split}
\]

\end{proof}

\begin{lemma}
$f(X)$ is a properly embedded surface.
\end{lemma}

\begin{proof}
By Lemma \ref{lm}, $f(X)$ can be split into two pieces bounded by the symmetry curve given by the fixed point set of $\tau$.  We will show that $f(X)$ is an embedding by showing that each congruent piece is a graph over a plane.

Let $X_1=X-\{z :|z|<1\}$ and $X_2=X-\{z: |z|\geq 1\}$.

On the interior of $X_1$, we have 
\[
|z|<1 \Rightarrow \im(\omega_1\overline{\omega}_2) > 0.
\]
Hence, the normal vector to $f(X_1)$ always points up on the interior of $f(X_1)$, and so the $f(X_1)$ is a graph over the unit disk.  

If $z=re^{i t}$ and $r<1$ then
\[
\begin{split}
f_3(re^{it})&=\re\frac{-1}{k(r^ke^{ikt}-1)}\\
&=\re\frac{1-r^ke^{-ikt}}{k|r^ke^{ikt}-1|^2}\\
&=\frac{1-r^k\cos(kt)}{k\left(\left(r^k\cos(kt)-1\right)^2+r^{2k}\sin^2(kt)\right)}\\
&=\frac{1-r^k\cos(kt)}{k\left(r^{2k}\cos^2(kt)-2r^k\cos(kt)+1+r^{2k}\sin^2(kt)\right)}\\
&=\frac{1-r^k\cos(kt)}{k\left(r^{2k}-2r^k\cos(kt)+1\right)}\\
&\geq\frac{1}{2k}\\
\end{split}
\]
Hence, $f(X_1)$ stays above the symmetry plane $\displaystyle x_3=\frac{1}{2k}$, and so $f(X_1)$ and $f(X_2)$ are both embedded and disjoint.  Thus, $f(X)$ is embedded.

As $|z|\rightarrow 1$, $|f_1(z)|\approx|\re\log{|z-1|}|\rightarrow\infty$.  Together with the symmetries of the surface, this proves that $f$ is proper.  
\end{proof}

\section{Open Questions}
\label{sec:open}

This section gathers some open questions we found interesting, arranged by topic.

\subsection{Construction Problems}

Most of the surfaces an ends found in this paper were discovered through trial and error, guided by intuition we had learned
from minimal surfaces. For example, to construct  an embedded end of a given type, one usually has to add lower order terms to the relevant forms to make sure that the end is proper, complete, and embedded. Finding these lower order terms has been delicate and on a case-by-case basis.

\begin{problem}
More concretely, for which types do there exist complete, properly embedded ends (or surfaces with an end of that type)?
 \end{problem}
 
\begin{problem}
We have several examples of properly embedded ends for which we could not find complete surfaces with such ends. This raises the more general question for which collections of ends one can find a properly embedded complete harmonic surface with these ends. Negative results would be interesting as well.
\end{problem}
 
 \begin{problem}
If an embedded harmonic surface exists, is there one with higher (or lower) genus and the same end types? For instance, 
is there an embedded harmonic torus with a single end of type $(2,2,n)$ for $n>3$? Or, is there an embedded torus with a single end of type $(2,3,4)$?.
\end{problem}

 \begin{problem}
We have an embedded example of  a genus two surface with three  ends of type $(2,2,3)$. Is such a surface also possible  with lower genus?
\end{problem}

\subsection{Conceptual Questions}

\begin{problem}
Can one intrinsically characterize what Riemannian surfaces admit complete  and proper harmonic parametrizations (of some codimension) in our restricted sense?
\end{problem}

\begin{problem}
Is there a natural larger class of Riemannian surfaces that satisfy our Gauss-Bonnet formula with a suitable notion of order of an end?
\end{problem}

\begin{problem}
What can one say about the moduli space of embedded harmonic surfaces of fixed genus and fixed end types?
For instance,
is the space  of complete, properly embedded punctured spheres with $n$  ends  of a given type (say $(0,1,2)$ or $(0,2,3)$)  connected? See \cite{al1} for a classification of harmonic catenoids.
\end{problem}

\begin{problem}
For embedded minimal surfaces, the  dimension of the moduli space at non-degenerate surfaces is determined by the number of ends and independent of the genus.
What is the situation for harmonic surfaces?
 \end{problem}
 
\begin{problem}
Is there an analogue to Osserman's Theorem? More precisely, when is a complete harmonic surface of finite total curvature defined on a punctured compact Riemann surface by data that extend meromorphically into the punctures?
\end{problem}

\begin{problem}
Can one classify all harmonic tori with total curvature $-8\pi$? 
\end{problem}

\bibliographystyle{plain}
\bibliography{bibliography}

\end{document}